\documentclass[11pt,a4paper,reqno]{amsart}
\usepackage{color,latexsym,amsfonts,amssymb,bbm,comment}
\usepackage{hyperref}
\usepackage{amsmath,cite}
\usepackage{amsthm}
\usepackage{fullpage}
\usepackage{graphicx} 
\usepackage{tikz} 
\usepackage{caption,subcaption}

\usepackage{pgfplots}


\newcommand{\wt}{\widetilde}

\newcommand{\E}{\mathbb{E}} 
\newcommand{\Z}{\mathbb{Z}}
\newcommand{\R}{\mathbb{R}}

\newcommand\supp{\mathop{\rm{ supp}}}

\def\eq{\begin{equation}}
\def\en{\end{equation}}

\newtheorem{theorem}{Theorem}[section]
\newtheorem{corollary}[theorem]{Corollary}
\newtheorem{lemma}[theorem]{Lemma}
\newtheorem{proposition}[theorem]{Proposition}
\theoremstyle{definition}
\newtheorem{definition}[theorem]{Definition}


\def\I{{\Bbb I}}
\def\DDD{{\mathcal D}}

\def\e{{\varepsilon}}

\def\D{\Delta}
\def\a{\alpha}

\def\b{\beta}
\def\MM{{\cal M}}

\def\e{\varepsilon}

\def\phi{\varphi}
\def\g{\gamma}
\def\la{\lambda}
\def\k{\kappa}
\def\r{\rho}
\def\de{\delta}

\def\x{\xi}

\def\D{\Delta}

\def\G{\Gamma}

\def\P{{\Phi}}

\def\T{\T}

\def\MM{\mathcal M}

\def\nn{{\mathfrak{n}}}

\def\V|{{\Vert}}
\def\LLL{{\mathfrak L}}


\def\d{{\rm d}}
\def\E{\mathbb{E}}
\def\I{\mathcal{I}}

\def\one{\mathbbmss{1}}
\def\mc{\mathcal}

\def\ms{\mathsf}

\def\one{\mathbbmss{1}}
\def\P{\mathbb{P}}
\def\R{\mathbb{R}}
\def\Z{\mathbb{Z}}

\def\TF{t_{\ms f}}
\def\XTi{X_i}


\def\Mac{\MM_{\ms{ac}}}

\def\Lla{L_{\la}}
\def\rla{r_{\la}}
\def\rhola{\rho_{\la}}

\def\muR{\mu_{\ms{R}}}
\def\nuR{\nu_{\ms{R}}}
\def\lla{l_\la}

\def\mula{\mu_\la}

\def\itf{[0,\TF]}

\def\itfiww{\itf^2  \times W^2\times [0,1]}

\def\nns{\nn^*}
\def\nnsp{\nn^{*,+}}
\def\nnsm{\nn^{*,-}}

\def\mus{\mu^*}
\def\muST{\mu^{\ms{t}}_{\ms{T}} \otimes \mu^{\ms{s}}_{\ms{T}}}
\def\muT{\mu_{\ms{T}}}

\def\musp{\mu^{\ms{s}}_{\ms{T}}}
\def\mut{\mu^{\ms{t}}_{\ms{T}}}

\def\LLLd{\LLL^\de_\la}

\def\rla{r_\la}

\def\Ss{S}
\def\Tt{T}
\def\Zz{Y^{\ms{sel}}}

\newcommand{\ao}[1]{a^{\ms{oc}#1}}
\newcommand{\ac}{a^{\ms{crit}}}
\newcommand{\addo}[2]{a^{#1,\ms{oc}#2}}
\newcommand{\addi}[1]{a^{#1,\ms{idle}}}

\newcommand{\addc}[2]{a^{#1,\ms{crit}#2}}

\def\ai{a^{\ms{idle}}}
\def\bi{b^{\ms{idle}}}

\def\acn{a^{\ms{crit}'}}
\newcommand{\addcn}[2]{a^{#1,\ms{crit}'#2}}

\def\Lmin{L^{\ms{min}}_\la}
\def\Ycrit{Y^{\ms{crit}}}
\def\Memp{{\mathcal{M}}_{\ms{emp}}}
\def\Zld{Z^{\la,\de}}
\def\Lld{L_\la^{\de}}

\keywords{large deviations, entropy, capacity, relay}
\subjclass[2010]{Primary 60F10; Secondary 60K35}

\begin{document}
\author{Christian Hirsch}
\address[Christian Hirsch]{Mathematisches Institut, Ludwig-Maximilians-Universit\"at M\"unchen, 80333 Munich, Germany}
\email{hirsch@math.lmu.de} 
\author{Benedikt Jahnel}
\address[Benedikt Jahnel]{Weierstrass Institute for Applied Analysis and Stochastics, Mohrenstra\ss e 39, 10117 Berlin, Germany}
              \email{benedikt.jahnel@wias-berlin.de}

\title{Large deviations for the capacity in dynamic spatial relay networks}

\date{\today}

\begin{abstract}
We derive a large deviation principle for the space-time evolution of users in a relay network that are unable to connect due to capacity constraints. The users are distributed according to a Poisson point process with increasing intensity in a bounded domain, whereas the relays are positioned deterministically with given limiting density. 
The preceding work on capacity for relay networks by the authors describes the highly simplified setting where users can only enter but not leave the system. In the present manuscript we study the more realistic situation where users leave the system after a random transmission time. For this we extend the point process techniques developed in the preceding work thereby showing that they are not {limited} to settings with strong monotonicity properties. 
\end{abstract}

\maketitle

\section{Introduction and main results}
Loss networks are classical {models} in mathematical queueing theory designed for capacity-constrained scenarios, where network participants can leave the system without being served, see for example~\cite{Ke91}. The underlying Markovian dynamics is challenging from a mathematical point of view and a substantial amount of research was performed to establish classical limiting statements such as propagation of chaos or central limit theorems, see~\cite{GrMe93,Gr01}. 

\medskip
{In}~\cite{gramMel1,gramMel2}, a large deviation analysis of loss networks was carried out in a mean-field setting where connections are formed disregarding geometry. In the presence of geometry, the models for random networks become substantially more complex to analyze, see for example~\cite{chatterjee2014localization}. A first step to investigate spatial loss networks was taken in~\cite{wireless3}, {in a situation where}
transmitters are distributed in a bounded domain via a Poisson point process with increasing intensity. Deterministic relays are {additionally} placed in the domain and users try to connect to the relays based on their positions in space. Transmissions are attempted at random times and once communication is established, the channel stays active and is blocked for other users for the remaining time. As a consequence, the system exhibits strong monotonicity properties which simplify the mathematical analysis. 

\medskip
In the present work, we show that the point-process techniques mentioned above are applicable in a broader context, in the sense that they do not rely on these monotonicity assumptions. In particular, we are able to derive large deviation results also in the case where transmissions are stopped at random times. The introduction of finite transmission times leads to more dependencies, which have to be controlled in our approximation approach. 
To illustrate this, consider the effect of a small perturbation in the behavior of a single user with a large transmission time. If the user chooses a different relay location, all other users that previously selected this relay could be affected. 
Next, {let us} provide a precise description of the model.

\medskip
First, we present a detailed description of the network model {which is an extension of the one} introduced in~\cite{wireless3}. Let $W\subset\R^d$ be a compact domain with boundaries of vanishing Lebesgue measure. We denote by $Y^\la=(y_i)_{ i\le n_\la}$ a collection of $n_\la$ fixed relays for which the empirical distribution 
$$l_{\la} = \la^{-1} \sum_{ i \le n_\la}\de_{y_i}$$
converges weakly to some probability measure $\mu_{\ms R}$ on $W$ {as $\la$ tends to infinity}. Further, there will be transmitters distributed according to a Poisson point process $X^\la$ in $W$. Its intensity measure is of the form $\la\mu^{\ms s}_{\ms{T}}$ with $\la>0$ and $\mu^{\ms s}_{\ms T} \in \MM(W)$ a finite Borel measure on $W$. We assume that $\mu^{\ms s}_{\ms T} \in \MM(W)$ is absolutely continuous w.r.t.~the Lebesgue measure. Each transmitter $\XTi$ starts sending data at a random time $\Ss_i \in \itf$. In contrast to~\cite{wireless3}, it stops the transmission at another random time $\Tt_i \in \itf$. We assume that the bivariate random variables $\{(S_i,T_i)\}_{i\ge1}$ are iid with a distribution $\muT$ that is absolutely continuous w.r.t.~the Lebesgue measure on $\itf^2$.

\medskip
At time $\Ss_i$ the transmitter $X_i$ selects a relay $\Zz_i\in Y^\la$ randomly according to the \textit{preference kernel}

\begin{align}\label{Kappa}
\k(\Zz_{i}| \XTi)=\frac{\k(\XTi, \Zz_{i})}{\sum_{y_k\in Y^\la}\k(\XTi, y_{k})}.
\end{align}
If the chosen relay is available, then $X_i$ holds the connection up to time $T_i$. This chosen relay $\Zz_i$ is then blocked in the time interval $[S_i, T_i]$ and not available for other transmitters. In the selection process, transmitters are not aware of the status of relays. In particular, they might choose a relay which is already occupied. We then call the transmitter \textit{frustrated}.

\medskip
In order to assess network quality, it is essential for a network operator to answer the following questions.
\begin{enumerate}
	\item What is the probability that an atypically large proportion of transmitters is frustrated?
	\item How do location or data-transmission time influence the frustration risk?
\end{enumerate}

We answer these questions by investigating the \emph{random measure of frustrated transmitters} 
\begin{align}\label{Busy_Process}
	\G^\la=\frac{1}{\la}\sum_{i \ge 1}\one\{\Zz_i(S_i) = 1\}\de_{(S_i, T_i, X_i)},
\end{align}
where $\Zz_i:\, [0,\TF]\to\{0,1\}$ denotes the function taking the value 1 if and only if $\Zz_i$ is occupied at time $t \le \TF$.

\begin{figure}[!htpb]
\centering
\begin{tikzpicture}[xscale=0.7,yscale=0.7]

\foreach \x in {0,1,2}
\foreach \y in {0,1}
{
\draw[black] (\x*4,\y*4) -- (\x*4+4,\y*4) -- (\x*4+4,\y*4+4) -- (\x*4,\y*4+4) -- (\x*4,\y*4);
\fill[color=black] (\x*4+2,\y*4+2) circle(0.25);
}
\node at (0.4,4.4) {1};
\node at (4.4,4.4) {2};
\node at (8.4,4.4) {3};
\node at (0.4,0.4) {4};
\node at (4.4,0.4) {5};
\node at (8.4,0.4) {6};


\draw[dashed] (4+3,4+2.3)--(4+2,4+2);
\fill[color=green] (4+3,4+2.3) circle(0.2);

\draw[dashed] (8+3,4+2.3)--(8+2,4+2);
\draw[dashed] (8+1,4+1)--(8+2,4+2);
\fill[color=green] (8+3,4+2.3) circle(0.2);
\fill[color=red] (8+1,4+1) circle(0.2);

\draw[dashed] (3,2.3)--(2,2);
\draw[dashed] (1,1)--(2,2);
\draw[dashed] (1.3,3.3)--(2,2);
\fill[color=green] (3,2.3) circle(0.2);
\fill[color=red] (1,1) circle(0.2);
\fill[color=red] (1.3,3.3) circle(0.2);

\draw[dashed] (4+1,1)--(4+2,2);
\draw[dashed] (4+1.3,3.3)--(4+2,2);
\fill[color=red] (4+1,1) circle(0.2);
\fill[color=red] (4+1.3,3.3) circle(0.2);

\draw[dashed] (8+1.3,3.3)--(8+2,2);
\fill[color=red] (8+1.3,3.3) circle(0.2);

\end{tikzpicture}
\caption{Collection of three transmitters (green and red) trying to communicate with one relay (black). The transmitters start and stop sending data in time steps $1,\dots,6$. Only the first transmitter (green) can establish a connection. Later transmitters (red) are unable to connect and become frustrated.}
\label{Fig}
\end{figure}
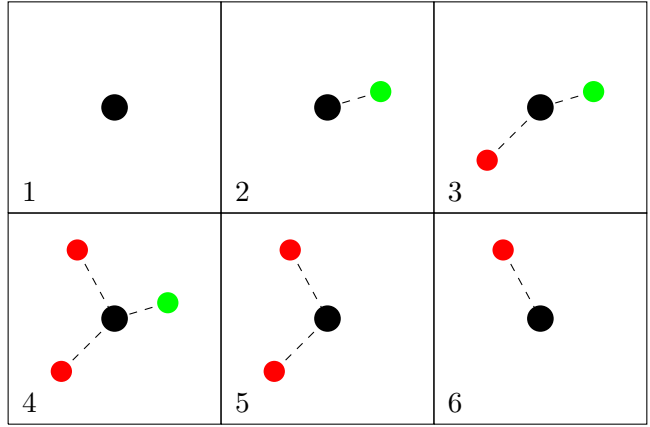

\subsection{The non-spatial case}
First, assume $\k \equiv 1$. That is, transmitters choose relays uniformly at random. Then, as in~\cite{wireless3}, the relay choice is encoded in a uniform random variable on $[0,1]$. More precisely, we attach an independent and uniform random variables $U_i \in [0,1]$ to the transmitter located at $X_i \in W$ and consider the empirical measure for the transmitters given by 
\begin{align*}
	\Lla = \la^{-1}\sum_{i \ge 1}\de_{(S_i, T_i, X_i, U_i)}.
\end{align*}
Note that $\Lla$ is a finite measure on the space $V = \itf^2 \times W \times [0,1]$.
We claim that $\Lla$ is sufficiently rich to describe the random measure of frustrated transmitters. Loosely speaking that is because of the following. If a transmitter arrives at time $t \in \itf$ and at that time $a \ge 0$ relays are already occupied, then, with probability $a / n_\la$, the transmitter selects an occupied relay and therefore becomes frustrated.
To make this precise, we introduce the evolution of the number of occupied relays $\la \wt{B}^\la$ 
via the time-integral equation
\begin{align}\label{DGL_Empi}
	\wt{B}^\la_t = \int_0^t L_\la(\d s, [t, \TF], W, [0, 1 - \wt{B}^\la_{s-}/r_\la])
\end{align}
where $r_\la=\la^{-1}n_\la$. We will see in Proposition~\ref{markRepLem} that, in distribution, the random measure of frustrated transmitters $\G^\la$ can be represented as 
\begin{align}\label{frustUserDefEmp}
	\tilde{\G}^\la(\d s, \d t, \d x) =  L_\la(\d s, \d t, \d x, [1 - \wt{B}^\la_{s-}/r_\la, 1]).
\end{align}

\medskip
To understand the high-density limit $\la\uparrow\infty$, we need to work with an analogue of equation~\eqref{DGL_Empi} for measures $\nu\in\MM=\MM(V)$ which are absolutely continuous w.r.t.~the measure
$$\muT = \muST \otimes {\bf U}([0,1]),$$
i.e., $\nu\in\Mac(\muT)=\{\nu'\in\MM:\, \nu'\ll\muT\}$. To that end, we investigate the integral equation
\begin{align}\label{DGL_Gen}
	\b_t = \int_0^t \nu(\d s, [t, \TF], W, [0, 1 - \b_{s-}/r])
\end{align}
where $r>0$. If $\nu\in\Mac(\muT)$ then, as shown in Proposition~\ref{approxScalProp}, we can construct a solution $\b_t(\nu,r)$ for \eqref{DGL_Gen} and define
\begin{align}
\label{frustUserDef}
\g(\nu,r)(\d s, \d t,\d x) = \nu(\d s, \d t, \d x, [1 - \b_s(\nu, r), 1]).
\end{align}
To state the main result of this section, we recall the definition of the relative entropy
$$h(\nu|\mu) = \int \log \frac{\d\nu}{\d\mu} \d\nu - \nu(V) + \mu(V)$$
if $\nu\in\Mac(\mu)$ and $h(\nu|\mu)=\infty$ otherwise. Further, recall the $\tau$-topology on $\MM$ where the associated convergence is tested on bounded and measurable functions, see~\cite[Section 6.2]{dz98}.

\begin{theorem}\label{LDP_NoSpatial}
	The family of random measures $\{\G^\la\}_\la$ satisfies the large deviation principle in the $\tau$-topology 
	with good rate function given by $I(\g) = \inf_{\nu \in \MM: \, \g(\nu,\muR(W)) = \g} h(\nu|\mu_{\ms T})$.
\end{theorem}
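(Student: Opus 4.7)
The plan is to reduce $\G^\la$ to a functional of the empirical measure $L_\la$ and then transport an LDP through that functional via the contraction principle. By Proposition~\ref{markRepLem}, $\G^\la$ is equal in distribution to $\wt\G^\la = \g(L_\la, r_\la)$ with $\g$ given in~\eqref{frustUserDef}; since $r_\la = \la^{-1}n_\la \to r := \muR(W)$ deterministically, it suffices to prove an LDP for $\g(L_\la,r_\la)$ in the $\tau$-topology with the stated rate function.

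As a first ingredient I would establish a Sanov-type LDP for the Poisson empirical measure $L_\la$ in the $\tau$-topology on $\MM$, with good rate function $h(\,\cdot\,|\muT)$. This combines the classical Sanov theorem~\cite[Section~6.2]{dz98}, applied to the iid marked samples $(S_i,T_i,X_i,U_i)$ conditionally on the Poisson cardinality $|X^\la|$, with the exponential concentration of $|X^\la|$ at speed $\la$ around its mean $\la\mu^{\ms s}_{\ms T}(W)$. Both ingredients are already used in the preceding paper~\cite{wireless3} and transfer without change to the present setting, since the additional mark $T_i$ and the joint law $\muT$ do not affect the Sanov mechanism.

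The technical heart is the $\tau$-stability of the solution map $\nu\mapsto\b_\cdot(\nu,r)$ of~\eqref{DGL_Gen}, whose well-posedness is Proposition~\ref{approxScalProp}. I would show that, on any sublevel set $\{h(\,\cdot\,|\muT)\le M\}$, the conditions $\nu_n\to\nu$ in $\tau$ and $r_n\to r$ imply $\sup_{t\in\itf}|\b_t(\nu_n,r_n)-\b_t(\nu,r)|\to 0$, via a Grönwall-type comparison exploiting the uniform integrability of the densities $\d\nu_n/\d\muT$ that is guaranteed by the entropy bound. From uniform control of $\b$, $\tau$-convergence $\g(\nu_n,r_n)\to\g(\nu,r)$ then follows by testing against bounded measurable functions and using absolute continuity of $\nu$ with respect to $\muT$.

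The main obstacle is precisely that the cut-off indicator in~\eqref{DGL_Gen} is not $\tau$-continuous on all of $\MM$, so the stability argument must rely on the entropy bound to control the discontinuity. If a direct Grönwall estimate proves too delicate, my fallback is to regularize the indicator by Lipschitz approximations, establish the LDP for the smoothed functionals, and conclude by exponentially good approximations via~\cite[Theorem~4.2.23]{dz98}. Either way, goodness of $h(\,\cdot\,|\muT)$ combined with the contraction principle yields the LDP for $\wt\G^\la$ in the $\tau$-topology with good rate function $I(\g)=\inf\{h(\nu|\muT):\g(\nu,r)=\g\}$, and the distributional equality from Proposition~\ref{markRepLem} transfers the result to $\{\G^\la\}_\la$.
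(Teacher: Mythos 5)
Your overall architecture (Markovian representation, Sanov for $\Lla$, then transporting the LDP through $\nu\mapsto\g(\nu,r)$) matches the paper's, but the step you call the ``technical heart'' is precisely where the argument breaks down, and the paper explicitly flags this: it is unclear that $\nu\mapsto\b_\cdot(\nu,r)$ is $\tau$-continuous, even on entropy sublevel sets, and a Gr\"onwall comparison is not available. A Gr\"onwall estimate would need a quantitative modulus for $a\mapsto\nu(\d s,A,W,[0,a])$, but the entropy bound only yields uniform integrability of the densities (Lemma~\ref{AbsoluteContinuity} part (2)), i.e.\ a modulus with no rate; moreover the feedback of $\b_{s-}$ into the cut-off, combined with the release of relays at the exit times $T_i$, destroys the monotonicity that made the analogous map tractable in~\cite{wireless3}. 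Even well-posedness of~\eqref{DGL_Gen} for general $\nu\in\Mac(\muT)$ is not free: the paper only \emph{constructs} a solution as the $\de\downarrow0$ limit of the time-discretized system~\eqref{SysODE} (Propositions~\ref{exUnAppProp} and~\ref{approxScalProp}), and continuity is only ever established for the discretized maps $\g^\de$ (Proposition~\ref{contProp}), never for $\g$ itself.

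A second, independent gap: the contraction principle cannot be applied to $\g(\Lla,\rla)$ even granting continuity on $\Mac(\muT)$, because $\Lla$ is atomic and hence never lies in $\Mac(\muT)$; in addition the functional itself varies with $\la$ through $\rla$. Both issues force the exponential-approximation route you relegate to a ``fallback'': one needs (i) continuity of the approximations (Proposition~\ref{contProp}), (ii) exponentially good approximation of $\G^\la$ by $\g^\de(\Lla,\rla)$ together with the replacement of $\rla$ by $r$ (Propositions~\ref{odeRepProp1} and~\ref{odeRepProp2}, the latter requiring the refined system~\eqref{SysODERen}), and (iii) uniform closeness of $\g^\de$ to $\g$ on entropy sublevel sets (Proposition~\ref{unifApproxProp}). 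Note finally that the paper's regularization is a \emph{time} discretization (relays are released only at multiples of $\de$), not a Lipschitz smoothing of the cut-off indicator; the time direction is where the new dependencies created by the exit times live, and your proposed smoothing does not obviously control them.
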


The main idea for the proof is to introduce approximating trajectories using a temporal discretization which will allow us to apply the contraction principle. 
\subsection{The spatial case}

The process of transmitter requests to a relay at location $\d y$ is a Poisson point process $Z^\la$ 
on $\hat V=\itf^2\times W^2$ 
with intensity measure $\la\mu(l_\la)$ where 
\begin{align*}
	\mu(l_\la)(\d s, \d t ,\d x, \d y) = \k_{l_\la}(\d y|x)(\muST)(\d s, \d t,  \d x)
\end{align*}
and 
\begin{align}\label{Kappa_Index}
\k_{l_\la}(\d y| x) = \k(y| x)l_\la(\d y).
\end{align}
As in~\cite{wireless3} we assume that 
\begin{enumerate}
    \item $\k_\infty = \sup_{x, y \in W}\k(x, y) < \infty$,
    \item the preference kernel $\k$ is jointly continuous $\mu^{\ms s}_{\ms T}\otimes\mu_{\ms R}$-almost everywhere,  and 
    \item for all $x\in W$ there exists $y\in W$ such that $\k(x, y)>0$, $y\in\supp(\muR)$ and $(x, y)$ is a continuity point of $\k$.
\end{enumerate}

\medskip
As in the non-spatial case, the random measure of frustrated transmitters can be described as a function of the empirical measure of the Poisson point process with intensity measure $$\muT(\muR) = \mu(\muR) \otimes {\bf U}([0,1])$$
on the extended state space $V' = \itf^2 \times W^2 \times [0, 1]$.
In the large deviation regime as $\la\uparrow\infty$, this measure can be distorted into another measure $\nn \in \MM'=\MM(V')$ which is absolutely continuous to $\muT(\muR)$. We define $\nn_y$ to be the measure of transmitters choosing a relay at $y$, i.e.,
\begin{align}
\nn(\d s, \d t, \d x, \d y,\d u) = \nn_y(\d s,\d t, \d x, \d u) \muR(\d y).
\end{align}
Then, using $\nn$ as a driving measure, equation \eqref{frustUserDef} becomes 
\begin{align}
\label{frustUserDefSp}
\g(\nn)(\d s, \d t,\d x) = \int_W \nn(\d s, \d t, \d x, \d y, [1 - \b_s(\nn_y, 1), 1]),
\end{align}
where the integration is performed w.r.t.~$\d y$.
As in Theorem~\ref{LDP_NoSpatial}, the function $\nn \mapsto \g(\nn)$ plays the r\^ole of the contraction mapping appearing in the rate function associated with the LDP for $\G^\la$. {We now present our second main result.}

\begin{theorem}\label{LDP_Spatial}
The family of random measures $\{\G^\la\}_\la$ satisfies the LDP in the $\tau$-topology with good rate function given by $I(\g)=\inf_{\nn \in \MM':\, \g(\nn) = \g} h(\nn|\muT(\mu_{\ms R}))$.
\end{theorem}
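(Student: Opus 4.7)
The plan is to obtain Theorem~\ref{LDP_Spatial} by combining a Sanov-type LDP for the extended empirical measure of transmitter requests with the contraction $\mathfrak{n}\mapsto\g(\mathfrak{n})$ from~\eqref{frustUserDefSp}, in the spirit of the argument announced for Theorem~\ref{LDP_NoSpatial}. Attach to every request in $Z^\la$ an independent uniform $U_i\in[0,1]$ and set
\[
\mathfrak{n}^\la = \la^{-1}\sum_{i\ge 1}\de_{(S_i,T_i,X_i,\Zz_i,U_i)}\in\MM'.
\]
By construction $\mathfrak{n}^\la$ is the empirical measure of a Poisson point process on $V'$ with intensity $\la\,\mu(l_\la)\otimes\mathbf{U}([0,1])$, and Proposition~\ref{markRepLem}, applied relay-wise, identifies $\G^\la$ in distribution with $\g(\mathfrak{n}^\la)$. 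It then suffices (i) to prove an LDP for $\mathfrak{n}^\la$ in the $\tau$-topology with rate function $h(\,\cdot\mid\muT(\muR))$, and (ii) to realize $\g$ as a suitable limit of $\tau$-continuous maps so that the contraction principle applies.

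For~(i), assumptions~(1)--(3) on $\k$ ensure that $\mu(l_\la)\to\mu(\muR)$ setwise as $\la\uparrow\infty$. A $\tau$-topology Sanov-type statement for Poisson empirical measures, of the same form as the one invoked in~\cite{wireless3}, yields the claimed rate function when the driving intensity is $\la\,\muT(\muR)$. The discrepancy between the two intensities is absorbed via a Radon--Nikodym change of measure whose log-density is uniformly bounded by a constant depending only on $\k_\infty$ and which vanishes under $\muT(\muR)$ as $\la\uparrow\infty$, so the LDP transfers unchanged to $\mathfrak{n}^\la$.

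For~(ii), I would introduce a joint time-space discretization: for $\de>0$, a grid $\{k\de\}$ in $\itf$ and a finite partition $\{W_j\}$ of $W$ with $\mathrm{diam}(W_j)\le\de$. Replace $\b_s(\mathfrak{n}_y,1)$ by a piecewise-constant $\b^\de_s$ that is updated only at grid points according to~\eqref{DGL_Gen} and that depends on $y$ only through the cell $W_j\ni y$, using the averaged measure $\mathfrak{n}_{W_j}$ on that cell. The resulting map $\g^\de$ reads $\mathfrak{n}$ only through its evaluations on finitely many bounded Borel sets and is therefore $\tau$-continuous; the contraction principle then gives an LDP for $\g^\de(\mathfrak{n}^\la)$ with rate function $I^\de(\g)=\inf\{h(\mathfrak{n}\mid\muT(\muR))\colon\g^\de(\mathfrak{n})=\g\}$.

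The hard part will be closing the approximation $\g^\de\to\g$ and passing $I^\de\to I$. Unlike in~\cite{wireless3}, no monotonicity is available: as indicated in the introduction, a perturbation of a single long-lived transmitter may reroute many subsequent users and thereby alter $\b_s(\mathfrak{n}_y,1)$ on macroscopic time scales. I would handle this by applying the monotonicity-free scalar control of Proposition~\ref{approxScalProp} cell-by-cell and combining it with a Gronwall-type estimate on $|\b_s(\mathfrak{n}_y,1)-\b_s(\mathfrak{n}_{y'},1)|$, using continuity of $\k$ from assumption~(2) to control the cross-cell error. Goodness of $I$ follows from the level-set compactness of $h(\,\cdot\mid\muT(\muR))$ in the $\tau$-topology, and exponential equivalence between $\g^\de(\mathfrak{n}^\la)$ and $\g(\mathfrak{n}^\la)$ as $\de\downarrow 0$ then upgrades the approximate LDPs to the full claim via a standard Dawson--G\"artner-type passage to the limit.
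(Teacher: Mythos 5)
There is a genuine gap, and it sits at the foundation of your step~(i). You propose to prove a Sanov-type LDP in the $\tau$-topology for the extended empirical measure $\nn^\la$ of the request process, with rate function $h(\,\cdot\mid\muT(\muR))$, and to absorb the difference between the intensities $\la\,\mu(\lla)\otimes\mathbf{U}([0,1])$ and $\la\,\muT(\muR)$ by a Radon--Nikodym change of measure. No such change of measure exists: $\lla=\la^{-1}\sum_{i\le n_\la}\de_{y_i}$ is purely atomic and converges to $\muR$ only weakly, so $\mu(\lla)$ and $\mu(\muR)$ are mutually singular in the relay coordinate whenever $\muR$ is non-atomic. Worse, the asserted LDP is false in the $\tau$-topology: the set $A=\itf^2\times W\times\bigl(\bigcup_\la Y^\la\bigr)\times[0,1]$ is Borel with $\muT(\muR)(A)=0$, yet $\nn^\la(A)=\nn^\la(V')$ almost surely, so the $\tau$-open set $\{\nu:\nu(A)>c\}$ is visited with probability tending to one while every measure in it fails to be absolutely continuous w.r.t.\ $\muT(\muR)$ and thus has infinite entropy; the large deviation lower bound cannot hold. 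This is precisely the obstruction the paper's proof is built to avoid: it never states an LDP for the spatial empirical measure itself, but instead discretizes $W$ into cells $W_i$, makes the relay choice uniform within each cell so that Theorem~\ref{LDP_NoSpatial} applies cell by cell with the relay coordinate replaced by $\muR\rest W_i$ (Proposition~\ref{ExpEquiv_Spatial_m}), and then proves that the resulting frustrated-transmitter measures are exponentially good approximations of $\G^\la$ directly (Propositions~\ref{ExpEquiv_Spatial} and~\ref{ExpEquiv_Spatial_2}), using the critical-relay bound of Lemma~\ref{totVarRandBoundLem} and the kernel estimates of Lemma~\ref{intBoundLem} rather than any comparison of the empirical measures themselves.

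Two further points would still need repair even if step~(i) were granted. First, your closing ``Gronwall-type estimate'' on $|\b_s(\nn_y,1)-\b_s(\nn_{y'},1)|$ is not available: the integral equation~\eqref{DGL_Gen} depends on $\b$ through the $\nu$-measure of an interval with endpoint $\b_{s-}$, which is not Lipschitz for general absolutely continuous $\nu$; the paper replaces this by the two-measure approximating system~\eqref{SysODESpatial} and the comparison arguments of Lemma~\ref{solCurvesLem}, yielding Corollary~\ref{scaleBetLem}. Second, identifying the limiting rate function requires more than letting $I^\de\to I$: the approximate rate functions are relative entropies w.r.t.\ the \emph{different} reference measures $\muT^\de(\muR,\muR)$, and the machinery of~\cite[Theorem 4.2.23]{dz98} demands that all of them be contractions of one common rate function. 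This is what the coupling space $V^*$ and Lemma~\ref{uniformLem} provide, and your proposal has no substitute for that step.
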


\subsection{Organization of the manuscript}In Section~\ref{Outline_One} (respectively Section~\ref{Outline_Two}) we present the proof of Theorem~\ref{LDP_NoSpatial} (respectively Theorem~\ref{LDP_Spatial}) via a series of propositions. The details of the proofs for these propositions is then presented in Section~\ref{thm1Sec} (respectively Section~\ref{thm2Sec}).

\section{Outline of proof for Theorem~\ref{LDP_NoSpatial}}\label{Outline_One}
A first idea for a proof of Theorem~\ref{LDP_NoSpatial} would be to represent the random measure of frustrated transmitters $\G^\la$ as a continuous functional of the marked Poisson point process $\Lla$. The desired large deviation principle could then be recovered from Sanov's theorem with the help of the contraction principle. 
However, $\G^\la$ is given as the solution of equation~\eqref{DGL_Gen} using $\Lla$ as the driving measure. As in~\cite{wireless3}, it is unclear why this dependence should be continuous in $\Lla$. 

In order to cope with this problem, we introduce an approximating system of scalar differential equations where transmitters release connections only at discrete time steps. For this system, continuous dependence and unique existence of solutions can be established. Further, limiting trajectories of the approximations give rise to solutions of the original equation. Finally, using the tool of exponentially good approximations, we recover Theorem~\ref{LDP_NoSpatial} from the LDP for the approximating measures. 

\medskip
Let us start by verifying that the random measure $\tilde{\G}^\la$ as defined in \eqref{frustUserDefEmp} has the same distribution as $\G^\la$.

\begin{proposition}
	\label{markRepLem}
The random measures $\G^\la$ and $\tilde\G^\la$ have the same distribution.
\end{proposition}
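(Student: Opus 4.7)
The plan is to exhibit a pathwise coupling on a single probability space under which $\G^\la$ and $\tilde\G^\la$ coincide as random measures. On a space carrying the marked Poisson point process $L_\la$ that encodes $(S_i,T_i,X_i,U_i)$, I manufacture the relay choices $\Zz_i$ inductively in increasing order of the arrival times $S_i$, in such a way that, on the one hand, $(S_i,T_i,X_i,\Zz_i)_i$ has the joint distribution prescribed by the original model, and on the other hand the frustration event matches the one appearing in~\eqref{frustUserDefEmp}.

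At step $i$, let $A_i\subseteq Y^\la$ be the set of relays occupied at time $S_i-$ under the decisions already made. Choose any measurable map $\f_{A_i}\colon[0,1]\to Y^\la$ that pushes the uniform measure on $[0,1]$ forward to the uniform measure on $Y^\la$ while sending $[1-|A_i|/n_\la,\,1]$ onto $A_i$; concretely, split $[0,1]$ into $n_\la$ equal subintervals, assign the last $|A_i|$ of them to the relays in $A_i$ (in some fixed order) and the first $n_\la-|A_i|$ to those in $Y^\la\sm A_i$. Setting $\Zz_i := \f_{A_i}(U_i)$, two properties follow. First, because $\f_{A_i}$ is measure-preserving and depends only on past variables, an easy induction shows that conditionally on $(S_j,T_j,X_j)_j$ the $(\Zz_i)_i$ are iid uniform on $Y^\la$, matching the relay-choice law in the original model. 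Second, by construction $\{\Zz_i\in A_i\}=\{U_i\ge 1-|A_i|/n_\la\}$ pointwise.

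It remains to match the two occupation processes. Let $\la B^\la_t := |\{i\colon S_i\le t\le T_i,\ \Zz_i\notin A_i\}|$ record the number of relays occupied at time $t$ in the original dynamics under the coupling. Both $B^\la$ and $\tilde B^\la$ start at $0$, jump up by $1/\la$ at every successful arrival, and down by $1/\la$ at the end of every successful transmission. The two notions of success are $\{U_i\le 1-|A_i|/n_\la\}$ and $\{U_i\le 1-\tilde B^\la_{S_i-}/r_\la\}$, and they coincide whenever $|A_i|/n_\la=\tilde B^\la_{S_i-}/r_\la$. Since $\mu_{\ms T}$ is absolutely continuous, almost surely all jump times are distinct; an induction over the ordered jump times therefore yields $B^\la\equiv\tilde B^\la$, and the sets of frustrated transmitters agree as well.

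Consequently $\G^\la=\tilde\G^\la$ pathwise under the coupling, which gives equality in distribution. The delicate point is the self-consistency of the inductive construction: one has to verify that the ordering of arrival times is well defined and that each $A_i$ is measurable with respect to $\sigma((S_j,T_j,X_j,U_j)_{j<i})$, both of which follow from the Poisson structure of $X^\la$ and the absolute continuity of $\mu_{\ms T}$.
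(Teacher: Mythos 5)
Your proposal is correct, but it takes a slightly different route from the paper. The paper conditions on $\{(S_i,T_i,X_i)\}_i$ and then identifies the two pairs $(\G^\la, B)$ and $(\tilde\G^\la,\tilde B^\la)$ as time-inhomogeneous Markov chains with jumps of size $\la^{-1}$ at the times $S_i$ and $T_i$, checking that their transition probabilities coincide: at an arrival the probability of hitting an occupied relay is $\rla^{-1}B_{S_i-}$ in the original model and $\P(U_i\ge 1-\rla^{-1}\tilde B^\la_{S_i-})=\rla^{-1}\tilde B^\la_{S_i-}$ in the marked model, and at exits both decrease deterministically. You instead build an explicit pathwise coupling, generating $\Zz_i=\f_{A_i}(U_i)$ from the mark $U_i$ through a measure-preserving map adapted to the set $A_i$ of currently occupied relays, so that $\{\Zz_i\in A_i\}=\{U_i\ge 1-|A_i|/n_\la\}$ up to a null set, and then propagate $B^\la\equiv\tilde B^\la$ by induction over the (a.s.\ distinct) jump times. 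The key observation is the same in both arguments — conditionally on the past, frustration occurs with probability (occupied relays)$/n_\la$ in either description — but your version yields the stronger conclusion of almost-sure equality under the coupling, at the price of the extra bookkeeping you correctly flag: the measurability of $A_i$ with respect to the strictly earlier variables, the verification that the coupled $(\Zz_i)_i$ are conditionally iid uniform (which needs the backward/forward conditioning argument you sketch, since $A_i$ depends on $U_1,\dots,U_{i-1}$), and the a.s.\ distinctness of all $S_i,T_j$ from the absolute continuity of $\muT$. The paper's conditioning argument sidesteps this construction entirely but only delivers equality in distribution, which is all the proposition asserts. Both proofs are valid; yours is marginally heavier but self-contained and would also serve in situations where a pathwise comparison is needed later.
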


In order to construct solutions of~\eqref{DGL_Gen} for general absolutely continuous driving measures, we introduce an approximating system of differential equations. This system corresponds to a scenario where transmitters release connections only at discrete time steps of size $\de>0$ such that the number of time steps is given by $\TF/\de \in \Z$. Before providing the detailed description of the system, we discuss the intuition behind the approximation. The system describes jointly the evolution of the normalized masses of
\begin{enumerate}
	\item guaranteed idle relays $\ai$, 
	\item guaranteed occupied relays $\ao{, k-1}$ {which get released in} the interval $\D_\de(k-1) = ((k-1)\de,k\de]$, and 
	\item critical relays $\ac{}$.
\end{enumerate}
At time zero all relays are idle, i.e.~$\ai_0 = 1$. After that, we describe the evolution of $\addi{\de}_t$ iteratively for $t \in \D_\de(k-1)$ as follows.
In the first approximating equation, the number of idle relays is reduced according to the mass the measure $\nu$. That is,
$$\ai_t = \ai_{(k-1)\de} - \int_{((k-1)\de, t]} \nu(\d s, ((k-1)\de, \TF], W, [0, \ai_{s-}]).$$
In particular, inside the interval $\D_\de(k-1)$ the idle relay mass $\ai_t$ is decreasing. At the interval boundary $k\de$ the idle relay mass increases by the mass of occupied relays $\ao{, k-1}_{k\de-}$ that leave in the time interval $\D_\de(k-1)$. In other words, 
$$\ai_{k\de} = \ai_{k\de-} + \ao{, k - 1}_{k\de-}.$$
At time zero no relays are occupied, so that $\ao{, j}_0 = 0$ for all $j \ge 0$. Next, chosen relays are counted as occupied if their exit times are not in the discretization window under consideration. This is captured by the equation
$$\ao{,j}_{t} = \ao{,j}_{(k-1)\de}+\int_{((k-1)\de,t]}\nu(\d s, \D_\de(j), W, [0, \ai_{s-}]),$$
where $j \ge k$.
Typically occupied relays with exit time in the interval $\D_\de(k-1)$ become idle by time $k\de$. However, this is no longer true if they are chosen again by another transmitter appearing in that interval. Hence, the mass of relays that can be released at time $k\de$ has to be decreased accordingly
		$$\ao{, k - 1}_t = \ao{, k - 1}_{(k - 1)\de}-\int_{((k - 1)\de, t]}\nu(\d s, ((k - 1)\de, \TF],W, [\ai_{s-}, \ai_{s-}+\ao{, k - 1}_{s-}]).$$
In order to quantify the loss of information caused by the discretization, we identify critical relays based on the discretization $\de$. If the transmitter's exit time is in the considered discretization window and hence entrance and exit times are in the same discretization, then the discretized picture provides only incomplete information. Therefore, such transmitters are counted as critical.
 Additionally, we count as critical the newly chosen relays which have been occupied prior to the time window with exit times in the time window. These two aspects give rise to the following equation for the critical relays
			\begin{align*}
				\ac_{t} =\ac_{(k-1)\de} &+ \int_{((k-1)\de,t]} \nu(\d s, \D_\de(k-1), W, [0, \ai_{s-} + \ao{,k-1}_{s-}])\\
				&+\int_{((k-1)\de, t]}\nu(\d s, (k\de,\TF],W, [\ai_{s-}, \ai_{s-} + \ao{,k-1}_{s-}]).
			\end{align*}
To summarize, we arrive at the following system of differential equations.
 
\begin{definition}
	Let $\nu\in\MM$ and define the following coupled system of differential equations with initial conditions $\ai_0 = 1$, $\ao{,j}_0 = 0$ and $\ac_0 = 0$.
	\begin{equation}\label{SysODE}
		\begin{split}
			\ai_t&=\ai_{(k-1)\de}-\int_{((k-1)\de,t]}\nu(\d s,((k-1)\de, \TF],W, [0, \ai_{s-}])\cr
			\ai_{k\de}&=\ai_{k\de-}+\ao{, k-1}_{k\de-}\cr
			\ao{, k-1}_{t}&=\ao{, k-1}_{(k-1)\de}-\int_{((k-1)\de, t]}\nu(\d s,((k-1)\de, \TF],W, (\ai_{s-}, \ai_{s-}+\ao{,k-1}_{s-}])\cr
			\ao{, k-1}_{k\de}&=0\cr
			\ao{, j}_{t}&=\ao{, j}_{(k-1)\de} + \int_{((k-1)\de, t]}\nu(\d s, \D_\de(j), W, [0, \ai_{s-}])\cr
			\ac_{t}&=\ac_{(k-1)\de}+\int_{((k-1)\de, t]}\nu(\d s, \D_\de(k-1),W,  [0, \ai_{s-}+\ao{, k-1}_{s-}])\cr
			&\phantom{=\ac_{(k-1)\de}}+\int_{((k-1)\de, t]}\nu(\d s, (k\de, \TF], W, ( \ai_{s-}, \ai_{s-}+\ao{, k-1}_{s-}]).
		\end{split}
	\end{equation}
	where $j\ge k$ and $t\in\D_\de(k-1)$.
\end{definition}

\medskip
In a first step, we establish existence and uniqueness of solutions of the above system for $\nu \in \Mac(\muT)$ which we then denote by $a(\nu)=(\ai(\nu), \{\ao{,j}(\nu)\}_{j\ge0}, \ac{}(\nu))$. To stress the dependence of the solution on the discretization parameter $\de$, we sometimes write $a^\de(\nu)=(\addi{\de}(\nu), \{\addo{\de}{,j}(\nu)\}_{j\ge0}, \addc{\de}{}(\nu))$.
\begin{proposition}
        \label{exUnAppProp}
	        Let $\nu\in\Mac(\muT)$, then the system~\eqref{SysODE} admits a unique solution.
\end{proposition}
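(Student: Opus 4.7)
The plan is to induct on the discretization index $k = 1, \dots, \TF/\de$ and extend the solution one interval at a time. The base case is handed to us by the initial data $\ai_0 = 1$, $\ao{,j}_0 = 0$, $\ac_0 = 0$. Given a unique solution on $[0, (k-1)\de]$, the task reduces to constructing $(\ai, \{\ao{,j}\}_{j \ge k-1}, \ac)$ on $\D_\de(k-1) = ((k-1)\de, k\de]$; the algebraic jumps at $k\de$ then furnish the initial data for the next interval. The crucial observation is that the subsystem over $\D_\de(k-1)$ decouples hierarchically: the equation for $\ai$ involves only $\ai$; once $\ai$ is in hand, the equation for $\ao{,k-1}$ involves only $\ai$ and itself; and the remaining $\ao{,j}$ (for $j \ge k$) and $\ac$ are then explicit integrals against $\nu$. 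Hence the whole problem reduces to two self-referential integral equations of the same structural type.

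The core analytic step is existence for
$$\ai_t = \ai_{(k-1)\de} - \int_{((k-1)\de, t]} \nu(\d s, ((k-1)\de, \TF], W, [0, \ai_{s-}]).$$
Since $\nu \ll \muT$ and $\muT$ is absolutely continuous with respect to Lebesgue on $\itf^2 \times W \times [0,1]$, $\nu$ admits a Lebesgue density $h$, and the function $\Phi(s,u) := \int_{(k-1)\de}^\TF \int_W \int_0^u h(s, t', x, v) \d v \d x \d t'$ is defined for a.e.~$s$, non-decreasing and continuous in $u$, and integrable in $s$. The equation is therefore equivalent to the Carathéodory ODE $\ai_t' = -\Phi(t, \ai_t)$ a.e., whose solvability is standard, yielding a non-increasing trajectory on $\D_\de(k-1)$. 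Equivalently, one can proceed by monotone iteration of the order-reversing map $T(a)(t) = \ai_{(k-1)\de} - \int_{((k-1)\de, t]} \nu(\d s, \cdot, \cdot, [0, a_{s-}])$: starting from $a^{(0)} \equiv \ai_{(k-1)\de}$, the subsequences $\{a^{(2n)}\}$ and $\{a^{(2n+1)}\}$ are monotone and converge by dominated convergence to fixed points of $T$.

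For uniqueness, the monotonicity of the integrand in the state variable is decisive. If $a, b$ are two solutions, let $t_0 \ge (k-1)\de$ be the supremum of times where they coincide. If $t_0 < k\de$, one may choose $t_1 > t_0$ with, say, $a_t > b_t$ on $(t_0, t_1]$; subtracting the integral equations gives
$$a_t - b_t = -\int_{(t_0, t]} \nu(\d s, \cdot, \cdot, (b_{s-}, a_{s-}]) \le 0,$$
contradicting $a_t > b_t$. The self-referential equation for $\ao{,k-1}$,
$$\ao{,k-1}_t = \ao{,k-1}_{(k-1)\de} - \int_{((k-1)\de, t]}\nu(\d s, \cdot, \cdot, (\ai_{s-}, \ai_{s-} + \ao{,k-1}_{s-}]),$$
has exactly the same structure once $\ai$ has been fixed (the integrand is monotone and continuous in $\ao{,k-1}_{s-}$), so existence and uniqueness follow by identical arguments. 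Finally, $\{\ao{,j}\}_{j \ge k}$ and $\ac$ are obtained by direct integration and are therefore unambiguously determined.

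The main obstacle is precisely the self-referential character of the equation for $\ai$: without a pointwise Lipschitz bound on $\d\nu/\d\muT$, a naive Picard contraction is unavailable. The key structural insight is that absolute continuity of $\nu$ in the uniform coordinate makes $u \mapsto \nu(\cdot, \cdot, \cdot, [0, u])$ continuous and monotone, which substitutes for Lipschitz continuity in both existence (Carathéodory/monotone iteration) and uniqueness (the one-sided comparison above). This is the only place where absolute continuity of $\nu$ with respect to $\muT$, rather than mere finiteness of $\nu$, is essential.
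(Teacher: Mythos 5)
Your proof is correct and follows the same skeleton as the paper's: induction over the discretization intervals $\D_\de(k-1)$, the observation that the system decouples hierarchically so that only the two self-referential equations for $\ai$ and $\ao{,k-1}$ require work, and explicit integration for the remaining components. Where you differ is in how the core scalar step is discharged. The paper substitutes $\bi_t = \ai_{(k-1)\de} - \ai_t$ and rescales the mark coordinate to bring the equation into the normalized form treated in \cite[Proposition~2.2]{wireless3}, which is then cited as a black box (and likewise for $\ao{,k-1}$ after a second change of measure). You instead argue from scratch: absolute continuity of $\nu$ w.r.t.\ Lebesgue measure turns the integral equation into a Carath\'eodory ODE $\ai_t' = -\Phi(t,\ai_t)$ with $\Phi$ measurable in $t$, continuous and non-decreasing in the state, and dominated by the integrable majorant $\Phi(\cdot,1)$; existence is then classical, and uniqueness follows from the one-sided comparison argument (which is in substance the same device as the paper's Lemma~\ref{solCurvesLem}). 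This self-contained route is arguably cleaner and makes explicit exactly where absolute continuity is used. One caveat: your alternative existence argument via monotone iteration of the order-reversing map $T$ is not complete as stated --- the monotone limits of the even and odd subsequences are a priori only fixed points of $T\circ T$, not of $T$, and an extra argument is needed to show the two limits coincide. Since you present this only as an equivalent alternative and the Carath\'eodory route stands on its own, this does not affect the validity of the proof.
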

By sending $\de \downarrow 0$, we arrive at a solution of the original equation~\eqref{DGL_Gen}. More precisely, define 
$$\b_t(\nu, r)= r - \limsup_{\de\downarrow0}  r\addi{\de}_t(r^{-1}\nu),$$ 
and 
$$\Memp(V) = \bigcup_{\r\ge0}\MM_\r(V)$$ as the union of empirical measures 
\begin{align}
    \label{mmrEq}
    \MM_\r(V)=\{\r\sum_{X_i\in X}\de_{X_i}:\,X\subset V, |X|<\infty \}
\end{align}
with weights $\r\ge 0$. Then, we have the following existence result.
\begin{proposition}
	        \label{approxScalProp}
		Let $\nu \in \Mac(\muT) \cup \Memp(V)$, then $\b_t(\nu, r)$ solves equation~\eqref{DGL_Gen}.
	\end{proposition}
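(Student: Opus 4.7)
\medskip

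\textbf{Proof plan.} Throughout write $a^\de:=a^\de(r^{-1}\nu)$ and $B^\de_t:=r(1-\addi{\de}_t)$, so that $\b_t(\nu,r)=\liminf_{\de\downarrow 0}B^\de_t$. The plan is to read off~\eqref{SysODE} a discrete analogue of~\eqref{DGL_Gen}, show that the ``critical'' correction is negligible, and pass to the limit. First I would integrate the first line of~\eqref{SysODE} over a full grid up to $t\in\D_\de(k-1)$ and telescope the upward jumps $\addi{\de}_{j\de}-\addi{\de}_{j\de-}=\addo{\de}{,j-1}_{j\de-}$ coming from the second line; multiplying by $r$ then gives a closed-form identity of the shape
\begin{equation*}
B^\de_t \;=\; \int_{(0,t]}\nu\bigl(\d s,(\lfloor s/\de\rfloor\de,\TF],W,[0,1-B^\de_{s-}/r]\bigr)\;-\;r\!\!\sum_{j=1}^{k-1}\!\addo{\de}{,j-1}_{j\de-},
\end{equation*}
a discretised version of~\eqref{DGL_Gen} in which the exit constraint $T\ge t$ is replaced by $T\ge\lfloor s/\de\rfloor\de$ and the release sum takes over the cutoff.

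Next I would expand each term $\addo{\de}{,j-1}_{j\de-}$ using the third and fifth lines of~\eqref{SysODE}: the fifth line accumulates the mass of arrivals $s\le(j-1)\de$ with exit in $\D_\de(j-1)$ and idle selection, while the third line subtracts those re-occupied during $\D_\de(j-1)$, this subtraction feeding into $\addc{\de}{}_{\TF}$. Summing in $j$, the release term equals the $\nu$-mass of pairs $(s,T)$ with $s\le(k-2)\de$ and $T\in(\lceil s/\de\rceil\de,(k-1)\de]$, up to an error of order $r\addc{\de}{}_{\TF}$. The sixth line of~\eqref{SysODE} in turn bounds $\addc{\de}{}_{\TF}$ by the $\nu$-mass of the diagonal strip $\{(s,T):|T-s|\le\de\}$ plus a feedback term closed by a Gr\"onwall-type argument: for $\nu\in\Mac(\muT)$ this strip-mass vanishes as $\de\downarrow 0$ by absolute continuity of $\muT$ w.r.t.~Lebesgue on $\itf^2$, and for $\nu\in\Memp(V)$ the strip carries no atom of $\nu$ once $\de$ is smaller than the minimal time gap. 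Subtracting the two integrals and noting that the remaining exit range collapses to $(t,\TF]$ produces
\begin{equation*}
B^\de_t \;=\; \int_{(0,t]}\nu\bigl(\d s,(t,\TF],W,[0,1-B^\de_{s-}/r]\bigr)+\eps_\de(t),\qquad\|\eps_\de\|_\infty\to 0.
\end{equation*}

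Finally, $(B^\de)_{\de>0}$ is uniformly bounded in $[0,r]$ with uniformly bounded total variation, so Helly's selection theorem supplies a subsequence $\de_n\downarrow 0$ along which $B^{\de_n}_t\to\b^*_t$ pointwise outside a countable set, for some c\`adl\`ag $\b^*$. Absolute continuity (or discreteness) of $\nu$ in $s$ makes the jump set of $\b^*$ a $\nu$-null set on its $s$-marginal, so dominated convergence transfers the displayed identity to the limit and identifies $\b^*$ as a solution of~\eqref{DGL_Gen}; a standard diagonal extraction aligning $\de_n$ with the $\liminf$ at each prescribed $t$ then yields $\b_t(\nu,r)=\b^*_t$. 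I expect the bound on $\addc{\de}{}_{\TF}$ to be the main obstacle: without the monotonicity exploited in~\cite{wireless3}, this bound is implicit through the coupling of $\addc{\de}$ with $\addi{\de}$ and $\addo{\de}{,j-1}$, so closing the Gr\"onwall estimate requires extracting a cancellation driven purely by the vanishing of $\nu$ on the diagonal strip and transporting the resulting control into the re-occupation correction.
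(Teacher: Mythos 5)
Your overall strategy---extract a discretized version of \eqref{DGL_Gen} from the system \eqref{SysODE}, control the discrepancy by the critical mass $\addc{\de}{}_{\TF}$, show that the latter vanishes, and pass to the limit---is the same as the paper's, and your telescoped identity for $B^\de_t=r(1-\addi{\de}_t(r^{-1}\nu))$ is correct. Your bound on $\addc{\de}{}_{\TF}$ is also essentially right, but note that no Gr\"onwall argument is needed and there is no feedback loop to close: the second contribution to $\addc{\de}{}$ lives in the $u$-band $(\addi{\de}_{s-},\addi{\de}_{s-}+\addo{\de}{,k-1}_{s-}]$ whose width is bounded by $\nu(\itf\times\D_\de(\lfloor s/\de\rfloor)\times W\times[0,1])$, a quantity that is small uniformly in $s$ by absolute continuity (this is exactly Lemma~\ref{scaleCritLem} via Lemma~\ref{stoDomLem}); the diagonal-strip and minimal-gap arguments for the two cases $\nu\in\Mac(\muT)$ and $\nu\in\Memp(V)$ are as in the paper.

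The genuine gap is the final identification. The function $\b_t(\nu,r)$ is \emph{defined} as $r-\limsup_{\de\downarrow0}r\addi{\de}_t(r^{-1}\nu)$, i.e.\ as $\liminf_{\de\downarrow0}B^\de_t$, and the proposition asserts that \emph{this particular function} solves \eqref{DGL_Gen}. Helly's theorem only yields a subsequence $\de_n$ along which $B^{\de_n}_t$ converges outside a countable set, and the resulting $\b^*$ need not coincide with $\liminf_{\de\downarrow0}B^\de_t$ unless the full limit exists; your ``diagonal extraction aligning $\de_n$ with the $\liminf$ at each prescribed $t$'' can enforce agreement at only countably many $t$, while the equation involves $\b_{s-}$ for Lebesgue-a.e.\ $s\le t$ and must hold for every $t$. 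The missing ingredient is the monotonicity of $\de\mapsto\addi{\de}_t$ under dyadic refinement together with the uniform Cauchy bound $\sup_{n}\sup_{t}(\addi{2^{-n}\de}_t-\addi{\de}_t)\le\addc{\de}{}_{\TF}+2\sup_l\nu(\D_\de(l))$; this is the paper's Lemma~\ref{discMonLem}, which turns the $\limsup$ into an actual limit and at the same time allows one to pass to the limit inside the integral by \emph{monotone} convergence, dispensing with Helly and dominated convergence altogether. With that lemma the proof collapses to the identity $\int_0^t\nu(\d s,[t,\TF],W,[0,\addi{\de}_{s-}(r^{-1}\nu)])=r-r\addi{\de}_{t}(r^{-1}\nu)-r\addc{\de}{}_{t}(r^{-1}\nu)$ followed by monotone convergence and $\addc{\de}{}_{t}\to0$. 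Without an argument of this type, your proof establishes only that \emph{some} subsequential limit of the approximations solves \eqref{DGL_Gen}, not that $\b_t(\nu,r)$ does.
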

	Having the scalar processes 
	$\addi{\de}(r^{-1} \nu)$ and $\b(\nu, r)$ at our disposal, we can now introduce the measures 
	$$\g^\de(\nu, r)(\d s, \d t, \d x) = \nu(\d s, \d t, \d x, [\addi{\de}_{s-}(r^{-1}\nu),1])$$
	and
	$$\g(\nu, r)(\d s, \d t, \d x) = \nu(\d s, \d t, \d x, [r^{-1}\b_s(\nu, r),1]).$$
In order to apply the exponential approximation machinery from~\cite[Theorem 4.2.23]{dz98}, three steps are required. First, we establish continuity of the function $\Mac(\muT) \to \MM(\itf^2 \times W)$, $\nu \mapsto \g^\de(\nu, r)$ as a function of $\nu$. For this we work in the $\tau$-topology both on the source and the target space. On the source space, it is the coarsest topology such that all evaluation maps $\nu \mapsto \nu(A)$, $A \in \mc{B}(V)=\{A\subset V: \, A \text{ is Borel measurable}\}$, are continuous. On the target space, it is the coarsest topology such that all evaluation maps $\g \mapsto \g(A)$, $A \in \mc{B}(\itf^2 \times W)$ are continuous.
\begin{proposition}
	        \label{contProp}
		The map $\nu \mapsto \g^\de(\nu, r)$ is continuous in the $\tau$-topology on $\Mac(\muT)$.
\end{proposition}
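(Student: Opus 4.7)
The plan is to decompose the problem into a scalar continuity step for the auxiliary processes $\addi{\de},\addo{\de}{,j},\addc{\de}{}$ and a measure-valued step that transfers this to $\g^\de$.

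For the scalar step I would prove, by induction on the discretization index $k$, that $\nu_n\to\nu$ in the $\tau$-topology on $\Mac(\muT)$ implies pointwise convergence $a^{\de,n}_t:=\addi{\de}_t(r^{-1}\nu_n)\to\addi{\de}_t(r^{-1}\nu)=:a^{\de}_t$ for all $t\in\itf$, together with the analogous statements for the occupied and critical coordinates. Within each interval $\D_\de(k-1)$ the processes $a^{\de,n}$ are nonincreasing and take values in $[0,1]$, so Helly's selection theorem extracts a pointwise limit $a^{\de,\infty}$ along a subsequence. To identify this limit with $a^{\de}$ I would pass to the limit in the defining relation from~\eqref{SysODE} using the split
\begin{equation*}
\int r^{-1}\nu_n(\d s,\cdots,[0,a^{\de,n}_{s-}])=\int r^{-1}\nu(\d s,\cdots,[0,a^{\de,n}_{s-}])+\int r^{-1}(\nu_n-\nu)(\d s,\cdots,[0,a^{\de,n}_{s-}]).
\end{equation*}
The first summand converges to $\int r^{-1}\nu(\d s,\cdots,[0,a^{\de,\infty}_{s-}])$ by dominated convergence, since $\nu$ is finite and assigns zero mass to $\{u=a^{\de,\infty}_{s-}\}$ by absolute continuity of its $u$-marginal. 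The right-hand side of the resulting limit equation is continuous in $t$, so $a^{\de,\infty}$ is continuous on $((k-1)\de,k\de)$ and Dini's theorem upgrades the monotone pointwise convergence $a^{\de,n}\to a^{\de,\infty}$ to uniform convergence there. The second summand is then bounded by $|\int\one_{[0,a^{\de,\infty}_{s-}]}(u)\,\d(\nu_n-\nu)|$ plus two sliver terms measuring the $\nu_n$- and $\nu$-mass of $\{|u-a^{\de,\infty}_{s-}|\le\|a^{\de,n}-a^{\de,\infty}\|_\infty\}$. The former vanishes by $\tau$-convergence, since its integrand is a fixed bounded measurable function; the slivers are controlled by first selecting $\eps>0$ with small $\nu$-mass on the $\eps$-thickening, then picking $n$ large so that $\|a^{\de,n}-a^{\de,\infty}\|_\infty<\eps$ and the corresponding $\tau$-convergence bound on the fixed $\eps$-sliver applies. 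Uniqueness from Proposition~\ref{exUnAppProp} then identifies $a^{\de,\infty}=a^{\de}$, closing the induction after the same argument is run for $\addo{\de}{,j}$ and $\addc{\de}{}$.

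The measure-valued step follows from the same sliver estimate. For any bounded measurable $f$ on $\itf^2\times W$,
\begin{equation*}
\int f\,\d\g^\de(\nu_n,r)-\int f\,\d\g^\de(\nu,r)=\int f\bigl(\one\{u\ge a^{\de,n}_{s-}\}-\one\{u\ge a^{\de}_{s-}\}\bigr)\,\d\nu_n+\int f\,\one\{u\ge a^{\de}_{s-}\}\,\d(\nu_n-\nu).
\end{equation*}
The second summand vanishes by $\tau$-convergence of $\nu_n$ to $\nu$, while the first is bounded, up to $\|f\|_\infty$, by the sliver mass handled above.

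The main obstacle is establishing the uniform convergence $a^{\de,n}\to a^{\de}$ inside each interval: this is what lets $\tau$-convergence be invoked despite the $n$-dependent integrands, and it hinges on continuity of the limit trajectory, which in turn relies on the absolute continuity assumption $\nu\in\Mac(\muT)$. Without it the trajectories could have jumps inside the interval, breaking Dini's argument and therefore the entire approach.
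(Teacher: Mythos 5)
Your proposal has the right structural decomposition (a scalar continuity step for $\addi{\de}$, $\addo{\de}{,j}$, $\addc{\de}{}$ by induction over discretization intervals, followed by a measure-valued step using a sliver estimate controlled via absolute continuity), and these ingredients do appear in the paper's argument. However, there is a genuine gap that undermines the whole approach: the $\tau$-topology on $\Mac(\muT)$ is not first-countable (it is the topology of testing against all bounded measurable functions, cf.~\cite[Section~6.2]{dz98}), so sequential continuity does not imply continuity. Your entire scalar step is built on sequential tools -- you take a sequence $\nu_n\to\nu$, extract a pointwise-convergent subsequence via Helly's selection theorem, upgrade to uniform convergence via a Dini-type argument, and then invoke the ``every subsequence has a further subsequence converging to the right limit'' trick together with the uniqueness from Proposition~\ref{exUnAppProp}. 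None of these steps survive replacing sequences by nets: Helly's selection theorem and the subsequence-of-a-subsequence argument are inherently sequential and do not carry over to nets indexed by arbitrary directed sets. Consequently you would only establish sequential continuity of $\nu\mapsto\g^\de(\nu,r)$, which is strictly weaker than the $\tau$-continuity that Proposition~\ref{contProp} asserts and that the exponential-approximation machinery from~\cite[Theorem~4.2.23]{dz98} actually requires.

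The paper sidesteps this issue by working directly with neighborhoods: Lemma~\ref{contAuxLem0} and Lemma~\ref{contAuxLem1} show, via an explicit temporal mesh $\de'$ and the monotone comparison estimates of Lemma~\ref{solCurvesLem} together with~\cite[Proposition~2.5]{wireless3}, that the deviation of the target quantity can be bounded by the deviations $|(\nu-\nu')(A_i)|$ over finitely many fixed Borel sets $A_i$ plus a sliver controlled by Lemma~\ref{AbsoluteContinuity}. That is the correct shape of a $\tau$-continuity argument: small on finitely many source test sets implies small on the target. If you want to retain your compactness-plus-uniqueness route to the scalar continuity, you would need an additional argument showing that on $\Mac(\muT)$ (or at least on the relevant subsets) the $\tau$-topology is sequential, or you should replace the Helly/Dini machinery by a direct quantitative estimate of $|\addi{\de}_t(r^{-1}\nu')-\addi{\de}_t(r^{-1}\nu)|$ in terms of finitely many evaluations $|(\nu-\nu')(A_i)|$, which is in effect what Lemma~\ref{contAuxLem1} does. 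A secondary, smaller issue: you invoke Dini's theorem, which requires monotonicity in $n$; what you actually need is the P\'olya-type result that pointwise convergence of monotone-in-$t$ functions to a continuous limit is automatically uniform on compacts. That fix is routine, but the sequences-versus-nets problem is not.
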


Second, we establish exponential approximation relations between $\G^\la$ and the approximating processes. Let $\Vert\cdot\Vert$ denote the total variational norm on the Banach space of finite signed measures, i.e., 
$$\Vert \g \Vert = \sup_{A \in \mc{B}(V)} |\g(A)|.$$
We start by considering the random measure of satisfied transmitters.
\begin{proposition}
	\label{odeRepProp1}
The random measure $\g^\de(\Lla, \rla)$ is an $\Vert\cdot\Vert$-exponentially good approximation of $\G^\la$.
\end{proposition}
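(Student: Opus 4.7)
The plan is to exploit Proposition \ref{markRepLem} to couple $\G^\la$ with $\tilde\G^\la$, so that both $\tilde\G^\la$ and $\g^\de(\Lla,\rla)$ are deterministic functionals of the same empirical measure $\Lla$: both are upper thresholdings of $\Lla$ in the uniform coordinate, at the random thresholds $1-\wt B^\la_{s-}/\rla$ and $\addi{\de}_{s-}(\rla^{-1}\Lla)$, respectively. Since both intervals $[1-\wt B^\la_{s-}/\rla,1]$ and $[\addi{\de}_{s-}(\rla^{-1}\Lla),1]$ share the upper endpoint $1$, their symmetric difference at time $s-$ is an interval of length $|\addi{\de}_{s-}(\rla^{-1}\Lla)-(1-\wt B^\la_{s-}/\rla)|$, and hence
\[
\|\tilde\G^\la - \g^\de(\Lla,\rla)\| \le \Lla\bigl(\{(s,t,x,u):\ u\ \text{lies in this strip at time}\ s-\}\bigr).
\]

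The next step is the purely deterministic bound
\[
\|\tilde\G^\la - \g^\de(\Lla,\rla)\| \le \rla\,\adc_{\TF}(\rla^{-1}\Lla).
\]
This is built into the design of \eqref{SysODE}: on a window $\D_\de(k-1)$ the discretized picture conservatively keeps every relay with exit time in the current window inside the bucket $\addo{\de}{,k-1}$ until the boundary $k\de$, whereas in the true dynamics such a relay becomes idle as soon as its individual exit time is reached. The disagreement transmitters are thus precisely those whose uniform mark selects such an $\addo{\de}{,k-1}$-bucket relay that has already been released in the true system, and these are exactly the transmitters accumulated in $\adc$ through the two extra integrals in the last equation of \eqref{SysODE}. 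I would verify the bound by induction on the window index $k$, using \eqref{DGL_Empi} and \eqref{SysODE} to track the evolution of the two thresholds inside and across windows, concluding $\addi{\de}_{s-} \le 1-\wt B^\la_{s-}/\rla \le \addi{\de}_{s-} + \addo{\de}{,k-1}_{s-}$ on each $\D_\de(k-1)$ (with all discretized quantities evaluated at $\rla^{-1}\Lla$), and invoking the monotonicity of $\adc$ in time.

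Once the deterministic bound is in place, the exponential estimate follows from the LDP for the Poisson point process $\Lla$ (Sanov's theorem in the $\tau$-topology with rate $h(\cdot|\mu_{\ms T})$) applied to the functional $\nu \mapsto \adc_{\TF}(\rla^{-1}\nu)$. For absolutely continuous $\nu$ this functional is of order $\de$: the first integral in the $\adc$-equation of \eqref{SysODE} contributes transmitters with both $(s,t)$ in $\D_\de(k-1)$, whose $\nu$-mass is $O(\de^2)$ per window and thus $O(\de)$ in total; the second integral contributes transmitters landing on the $\addo{\de}{,k-1}$-bucket, whose typical mass is itself $O(\de)$, since $\addo{\de}{,k-1}_{(k-1)\de}$ counts relays with exit time in a window of length $\de$. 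A direct Cram\'er--Chernoff bound for the Poisson process $\Lla$ then yields $\P(\rla\,\adc_{\TF}(\rla^{-1}\Lla) > \eps) \le \exp(-\la c(\de,\eps))$ with $c(\de,\eps)\to\infty$ as $\de\downarrow 0$, from which the required
\[
\limsup_{\de\downarrow 0}\limsup_{\la\to\infty}\la^{-1}\log\P(\|\G^\la-\g^\de(\Lla,\rla)\|>\eps) = -\infty
\]
follows.

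The main obstacle will be the deterministic comparison: one must carefully track $\wt B^\la$ against $(\addi{\de}, \addo{\de}{,j}, \adc)$ through both the continuous evolution inside a window $\D_\de(k-1)$ and the instantaneous jumps at the boundaries $k\de$. A subtle point is that a single relay may be occupied, released, and re-occupied several times inside one window, and the form of the $\adc$-equation in \eqref{SysODE} is designed precisely to absorb these re-booking events into the critical mass. Showing that $\adc$ alone accounts for the entire slack between the discretized and the true dynamics, with no other sources of error, is where the bulk of the work lies.
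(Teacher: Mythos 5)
Your overall strategy --- reduce to $\tilde\G^\la$ via Proposition~\ref{markRepLem}, view both $\tilde\G^\la$ and $\g^\de(\Lla,\rla)$ as thresholdings of $\Lla$ in the $u$-coordinate, and bound the total-variation distance by the $\Lla$-mass of the strip between the two thresholds --- is exactly the paper's. But both of your key technical steps have problems. First, the sandwich $1-\wt B^\la_{s-}/\rla \le \addi{\de}{}_{s-} + \addo{\de}{,k-1}_{s-}$ is false in general: once a relay is counted as critical (say it was booked by a transmitter whose entrance and exit times both lie in $\D_\de(k-1)$), it leaves the bookkeeping of~\eqref{SysODE} permanently, yet in the true dynamics it is released at its exit time and becomes idle again. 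The correct upper threshold is $\addi{\de}{}_{s-} + \addo{\de}{,k-1}_{s-} + \addc{\de}{}_{s-}$, so the strip width is only controlled by a quantity of the form $\addc{\de}{}_{\TF} + 2\sup_l \nu(\D_\de(l))$ (this is Lemma~\ref{discMonLem} part (2) in the paper), and --- crucially --- the transmitters whose marks land in the $\addc{\de}{}$-portion of the strip are \emph{not} themselves among those counted by $\addc{\de}{}$. Your claimed deterministic bound $\Vert\tilde\G^\la - \g^\de(\Lla,\rla)\Vert \le \rla\,\addc{\de}{}_{\TF}(\rla^{-1}\Lla)$ therefore does not follow from the argument you sketch, and you yourself flag this step as unproven.

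Second, even granting a deterministic statement of the form ``the strip has width at most $\eta^{\de}_\la$ uniformly in $s$, where $\eta^{\de}_\la$ is an exponentially good approximation of zero,'' one must still convert this into ``the $\Lla$-mass of the strip is an exponentially good approximation of zero.'' The strip is a random set depending on $\Lla$ itself, so neither Sanov's theorem plus contraction (which would require $\tau$-continuity of $\nu\mapsto\addc{\de}{}_{\TF}(\nu)$, precisely the kind of continuity the whole discretization scheme is designed to avoid having to establish) nor a ``direct Cram\'er--Chernoff bound'' applies as stated. The paper supplies exactly this conversion in Lemma~\ref{stoDomLem} part (3): condition on the entrance times, use previsibility of the strip at each arrival, and stochastically dominate the count of transmitters landing in it by an independently thinned Poisson random variable, to which Lemma~\ref{AbsoluteContinuity} part (3) then applies. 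Without this ingredient, or a substitute for it, the probabilistic half of your argument is incomplete.
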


The exponential approximation machinery is designed for random quantities that can be expressed as a functional of the empirical measure. Hence, as an intermediate step, we also replace $\rla$ by $r$.

\begin{proposition}
	\label{odeRepProp2}
The random measure $\g^\de(\Lla,r_\la)-\g^\de(\Lla,r)$ is an $\Vert\cdot\Vert$-exponentially good approximation of zero.
\end{proposition}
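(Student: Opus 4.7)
The plan is to reduce the claim to super-exponential concentration of the $\Lla$-mass in a narrow slab in the $u$-coordinate whose width is controlled by $|r_\la-r|$. From the definition of $\g^\de$ and the identity $|\ind\{u\ge a\}-\ind\{u\ge b\}|=\ind\{u\in(a\wedge b, a\vee b]\}$, one has
\[
\|\g^\de(\Lla,r_\la)-\g^\de(\Lla,r)\|\le\int_V\ind\{u\in J_s(\Lla)\}\,\Lla(\d s,\d t,\d x,\d u),
\]
where $J_s(\Lla)$ is the interval between $\addi{\de}_{s-}(r_\la^{-1}\Lla)$ and $\addi{\de}_{s-}(r^{-1}\Lla)$. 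Hence the total variation is at most the $\Lla$-mass of an $s$-dependent slab whose width is uniformly bounded by $E^\la:=\sup_{s\in\itf}|\addi{\de}_s(r_\la^{-1}\Lla)-\addi{\de}_s(r^{-1}\Lla)|$; since $r_\la\to r$ deterministically, I expect $E^\la\to 0$ on a super-exponentially high-probability event.

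The first technical step is a deterministic stability estimate: on the event that $\|\Lla\|$ is close to its mean $\muT(V)$ and the $\Lla$-mass of any slab $\itf^2\times W\times I$ does not exceed $(1+\eps)\muT(V)\cdot|I|$ uniformly in small $|I|$, one has $E^\la\le C(\de,r)\cdot|r_\la-r|$. This follows by subtracting the equations in~\eqref{SysODE} applied respectively to $r_\la^{-1}\Lla$ and $r^{-1}\Lla$ and iterating a Gronwall-type inequality across the $\TF/\de$ intervals $\D_\de(k-1)$: at each step the local perturbation is controlled by the previous perturbation, by $|r_\la^{-1}-r^{-1}|\cdot\|\Lla\|$, and by the $\Lla$-mass of a slab whose width is itself bounded by the current perturbation. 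One simultaneously propagates perturbations of the occupied masses $\addo{\de}{,k-1}$, which re-enter $\addi{\de}$ through the boundary jumps $\addi{\de}_{k\de}=\addi{\de}_{k\de-}+\addo{\de}{,k-1}_{k\de-}$.

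Next, I would combine this stability estimate with Poisson-Chernoff bounds. Standard Chernoff gives super-exponential concentration of $\|\Lla\|$ around $\muT(V)$. For the uniform slab-mass bound, a Chernoff estimate for $\la\cdot\Lla(\itf^2\times W\times I)$, whose mean $\la\muT(V)\cdot|I|$ is small when $|I|$ is small, yields a Cram\'er rate $\eps\log(\eps/(\muT(V)\cdot|I|))$ that diverges as $|I|\to 0$; a union bound over a cover of $[0,1]$ by $\lceil 1/\eta\rceil$ intervals of length $\eta$ is then free at the super-exponential level. Applying this with $\eta_\la=C(\de,r)|r_\la-r|\to 0$ and invoking the stability estimate yields
\[
\limsup_{\la\to\infty}\la^{-1}\log\Prob\bigl(\|\g^\de(\Lla,r_\la)-\g^\de(\Lla,r)\|>\eps\bigr)=-\infty
\]
for every $\eps>0$, which is the required exponentially good approximation property.

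The principal technical obstacle is the deterministic stability estimate: its Gronwall-type iteration is self-referential, in that the slab width at each step equals the very perturbation quantity one is trying to bound, and the naive linearization produces a contraction factor of the form $r_\la^{-1}\muT(V)$ which need not be strictly below one. Its resolution requires careful exploitation of the structure of $\addi{\de}$ within each discretization interval, in particular that $\addi{\de}$ can only decrease on $\D_\de(k-1)$ and is pinned to $0$ once the idle mass is exhausted, so that perturbations cannot accumulate there, and the iterated error only picks up contributions on the portion of $\itf$ where the idle dynamics are genuinely active.
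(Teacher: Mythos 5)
Your opening reduction is sound and matches the paper's starting point: $\Vert\g^\de(\Lla,r_\la)-\g^\de(\Lla,r)\Vert$ is bounded by the $\Lla$-mass of the $s$-dependent slab between $\addi{\de}_{s-}(r_\la^{-1}\Lla)$ and $\addi{\de}_{s-}(r^{-1}\Lla)$. The rest of the argument, however, has a gap that I do not think can be repaired along the lines you propose. The event on which your stability estimate is supposed to hold --- that $\Lla(\itf^2\times W\times I)\le(1+\e)\muT(V)|I|$ uniformly over small intervals $I$ --- is not a super-exponentially likely event. For a Poisson variable with mean $\la\muT(V)|I|$, the probability of exceeding $(1+\e)$ times its mean decays at exponential rate of order $\muT(V)|I|\,h(\e)$, which is \emph{bounded} in $\la$ and in fact tends to $0$ as $|I|\downarrow0$; when $|I|\lesssim\la^{-1}$ a single atom of $\Lla$ (mass $\la^{-1}$) already violates the relative bound with probability of order $\la|I|$. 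Since exponentially good approximation requires every step to hold up to rate $-\infty$, an argument conditioned on this event caps out at a finite exponential rate and cannot deliver the conclusion. The deeper reason is that your Gronwall iteration needs the slab mass at width $e_{s-}$ to be dominated by $e_{s-}$ times a bounded density integrated in $s$; this is available for absolutely continuous driving measures but not for the atomic measure $\Lla$, and the ``self-referential'' obstacle you flag is not resolved by the monotonicity and pinning properties of $\addi{\de}$. A second, separate issue is the final step: even granting a deterministic bound $\eta_\la$ on the slab width, the mass of the \emph{moving} slab $\{(s,t,x,u):u\in J_s(\Lla)\}$ is not controlled by a union bound over a deterministic cover of $[0,1]$, since $J_s$ can sweep across all of $[0,1]$ as $s$ varies; one genuinely needs the previsibility--thinning argument of Lemma~\ref{stoDomLem} part (3).

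The paper avoids both problems by a domination rather than a stability argument. It introduces the coupled system~\eqref{SysODERen} with parameter $\rho_\la=r_\la^+/r_\la^-$, whose idle component $\addi{\de}(\rho_\la,\cdot)$ is shown (Lemma~\ref{appDomCoupRenLem}) to lie \emph{below} both $\addi{\de}_t(r^{-1}\Lla)$ and $\rho_\la\addi{\de}_t(\rho_\la^{-1}\cdot)$, with the gaps bounded exactly by the critical masses $\addc{\de}{}+\addcn{\de}{}\,(+\,\rho_\la-1)$. The normalization discrepancy then enters only through the previsible slab $[\rho_\la^{-1}\ai_{s-},\ai_{s-}]$, whose width is deterministically at most $1-\rho_\la^{-1}\to0$, so that Lemma~\ref{stoDomLem} part (3) converts it into an exponentially negligible mass without any concentration-of-slab-masses input. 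If you want to salvage your route, you would have to replace the Gronwall/concentration step by such a monotone coupling (or otherwise produce a bound on $\sup_s|\addi{\de}_s(r_\la^{-1}\Lla)-\addi{\de}_s(r^{-1}\Lla)|$ that is an exponentially good approximation of zero) and then invoke Lemma~\ref{stoDomLem} part (3) for the final step.
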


Third, the approximations $\g^\de(\nu,r)$ should be uniformly close to the true solution on sets of bounded entropy $\MM_\a(\muT) = \{\nu \in \MM:\, h(\nu|\muT) \le \alpha\}$. 
\begin{proposition}
	        \label{unifApproxProp}
		Let $\a, r > 0$ be arbitrary. Then,
		$$\lim_{\de \downarrow 0}\sup_{\nu\in\MM_\a(\mu)}\Vert\g^\de(\nu, r) - \g(\nu, r)\Vert = 0.$$
\end{proposition}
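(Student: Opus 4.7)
The plan is to bound $\|\g^\de(\nu, r) - \g(\nu, r)\|$ above by the $\nu$-mass of a suitable ``error set'' whose $\muT$-measure tends to zero uniformly in $\nu \in \MM_\a(\muT)$, and then to upgrade this to uniform convergence of the $\nu$-masses via the classical entropy inequality. Setting $\a^\de_s := \addi{\de}_{s-}(r^{-1}\nu)$ and $\a_s := 1 - r^{-1}\b_s(\nu,r)$, the measures $\g^\de(\nu,r)$ and $\g(\nu,r)$ differ only through these thresholds on the $u$-coordinate, so that
$$\|\g^\de(\nu, r) - \g(\nu, r)\| \le \nu(E^\de_\nu), \qquad E^\de_\nu := \{(s,t,x,u):\, u \in [\a^\de_s \wedge \a_s,\ \a^\de_s \vee \a_s]\}.$$

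Next, I would establish the pointwise bound $|\a^\de_s - \a_s| \le \addc{\de}_\TF(r^{-1}\nu)$ by interpreting $\addc{\de}_s$ as exactly the fraction of relays that the discretization cannot classify as strictly idle or strictly occupied. Since the $u$-marginal of $\muT$ is uniform on $[0,1]$, this yields
$$\muT(E^\de_\nu) \le \addc{\de}_\TF(r^{-1}\nu)\cdot \muST(\itf^2 \times W).$$
Summing the equation for $\ac$ over all discretization steps, $\addc{\de}_\TF(r^{-1}\nu)$ is decomposed into two contributions: one from transmitters whose entrance and exit fall in the same window (controlled by $r^{-1}\nu(\{t - s \le \de\})$) and one from arriving transmitters that select a relay whose occupation is scheduled to expire in the current window. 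Both contributions are dominated by $\nu$-masses of sets whose $\muT$-measures vanish as $\de\downarrow 0$, using that $\muST$ is absolutely continuous with respect to Lebesgue on $\itf^2$.

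The final step is uniform entropy control. For any $\nu \in \Mac(\muT)$ and measurable $A$ with $\muT(A) < 1$, the entropy inequality
$$\nu(A) \le \frac{h(\nu|\muT) + e^{-1}}{\log(1/\muT(A))}$$
shows that $\sup_{\nu \in \MM_\a(\muT)} \nu(A) \to 0$ as $\muT(A) \to 0$. Applied to the sets identified in the previous step, this gives $\sup_{\nu \in \MM_\a(\muT)} \addc{\de}_\TF(r^{-1}\nu) \to 0$; hence $\muT(E^\de_\nu)$ vanishes uniformly, and a second application of the entropy inequality to $E^\de_\nu$ yields the claim.

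The hard part, I expect, is justifying the pointwise comparison $|\a^\de_s - \a_s| \le \addc{\de}_\TF$. The cleanest route is a sandwiching argument: the continuous idle fraction $\a_s$ is trapped between two completions of the discretized dynamics, one counting all critical relays as idle and the other as occupied, and both completions differ from $\a^\de_s$ by at most $\addc{\de}_s \le \addc{\de}_\TF$. Implementing this rigorously requires coupling the discrete system~\eqref{SysODE} and the continuous equation~\eqref{DGL_Gen} so that they are simultaneously driven by the same measure $\nu$, and this is ultimately where the construction of $\ac$ as the repository of discretization uncertainty pays off.
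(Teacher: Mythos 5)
Your overall strategy coincides with the paper's: bound the total-variation distance by the $\nu$-mass of the set of points whose $u$-coordinate lies between the discretized threshold $\addi{\de}_{s-}(r^{-1}\nu)$ and its limiting value, control the width of that set uniformly in $s$ by the critical mass, and convert uniform smallness of $\muT$-measure into uniform smallness of $\nu$-measure over $\MM_\a(\muT)$ via the entropy inequality (this is precisely the combination of Lemma~\ref{stoDomLem} part (2) and Lemma~\ref{scaleCritLem} part (2) used in the paper). The paper also obtains the one-sided comparison more cheaply than your proposed two-sided sandwich, by proving monotonicity of $\addi{\de}$ under dyadic refinement (Lemma~\ref{discMonLem} part (1)), so that the limit is approached monotonically.

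There is, however, one concrete flaw in the step you yourself flag as the crux: the pointwise bound $|\a^\de_s-\a_s|\le\addc{\de}{}_{\TF}(r^{-1}\nu)$ is false in general, because $\addc{\de}{}$ is \emph{not} the full repository of discretization error at times $s$ in the interior of a window. In system~\eqref{SysODE} a relay with exit time in $\D_\de(k-1)$ remains counted as occupied until the window boundary $k\de$, whereas under any strictly finer discretization (and in the limit~\eqref{DGL_Gen}) it is released earlier; likewise, arrivals occurring within the current coarse window contribute transient discrepancies between $\sum_j\addo{\de}{,j}_s$ and its refined counterpart. Neither effect is recorded in $\addc{\de}{}$, and both are of order $\nu(\D_\de(k-1))$. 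This is exactly why the paper's Lemma~\ref{discMonLem} part (2) reads
$$\sup_{t\le\TF}\big(\addi{2^{-n}\de}_t(\nu)-\addi{\de}_{t}(\nu)\big)\le\addc{\de}{}_{\TF}(\nu)+2\sup_{l}\nu(\D_\de(l))$$
with the additional term $2\sup_l\nu(\D_\de(l))$. The gap is easily repaired: since $\muT$ gives mass at most $O(\de)$ to each slab $\D_\de(l)\times\itf\times W\times[0,1]$, the same entropy inequality you already invoke yields $\sup_{\nu\in\MM_\a(\muT)}\sup_l\nu(\D_\de(l))\to0$ as $\de\downarrow0$, and the remainder of your argument then goes through unchanged.
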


Using the above results, we can prove Theorem~\ref{LDP_NoSpatial}.

\begin{proof}[Proof of Theorem \ref{LDP_NoSpatial}]
Using Sanov's theorem as proved in \cite[Proposition 3.6]{wireless3} and the Propositions \ref{contProp}--\ref{unifApproxProp}, the result is a consequence of \cite[Theorem 1.13]{eiSchm}.
\end{proof}

\section{Proofs of Supporting results for Theorem~\ref{LDP_NoSpatial}}\label{thm1Sec}
In this section, we provide the proofs for Propositions \ref{markRepLem}-\ref{unifApproxProp}. First, in Section~\ref{auxSec}, we present auxiliary results, that we will use multiple times throughout the manuscript. Second, in Section~\ref{markovSec}, we derive a Markovian representation of the frustrated transmitters. Sections~\ref{existAppSec},~\ref{existSec} and~\ref{contSec} are devoted to existence, uniqueness and continuity properties of true and approximate solutions. Finally, in Sections~\ref{app1Sec} and~\ref{app2Sec}, we show that the approximate solutions are indeed close to the true ones.

\subsection{Auxiliary results}
\label{auxSec}
First, let us recall from~\cite[Lemma 3.1]{wireless3} some properties of absolutely continuous measures. 
\begin{lemma}\label{AbsoluteContinuity}
\begin{enumerate}
	\item	Let $\nu\in\Mac(\muT)$ be arbitrary. Then, $$\lim_{\e\downarrow0}\sup_{A\in \mathcal B(V):\, \mu_{\ms T}(A)<\e}\nu(A) = 0.$$
\item Let $\a>0$ be arbitrary. Then, $$\lim_{\e\downarrow0} \sup_{\substack{A\in \mathcal B(V):\, \mu_{\ms T}(A)<\e \\ \nu \in \MM_\a(\muT)}} \nu(A) = 0.$$
\item Let $\de>0$ be arbitrary and $N^{\e\la}$ be a random variable that is Poisson distributed with parameter $\e\la$
$$\lim_{\e\downarrow0}\limsup_{\la\uparrow\infty}\la^{-1} \log\P(N^{\e\la}>\la\de)=-\infty.$$
\end{enumerate}
\end{lemma}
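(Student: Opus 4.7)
The plan is to treat the three parts separately; each is classical, and the only real bookkeeping nuisance is the two correction terms $-\nu(V)+\muT(V)$ coming from the finite-measure convention for the relative entropy used in this paper. Part (1) is uniform integrability of a Radon--Nikodym density, part (2) is an entropy inequality of Young/Csisz\'ar type, and part (3) is a Cram\'er--Chernoff bound for a Poisson tail.

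For (1) I would write $\nu = f\muT$ with $f = \d\nu/\d\muT \in L^1(\muT)$, and split at a threshold $M>0$ as $f = f\one\{f\le M\} + f\one\{f>M\}$, giving
$$\nu(A) \le M\muT(A) + \int_V f\one\{f > M\}\,\d\muT.$$
The tail integral vanishes as $M\uparrow\infty$ because $f\in L^1(\muT)$, so given $\eta>0$ one picks $M=M(\eta)$ to make it at most $\eta/2$ and then $\e=\eta/(2M)$ to control the first term.

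For (2) the main tool is Young's inequality $uv \le u\log u - u + e^v$, applied pointwise with $u=f=\d\nu/\d\muT$ and $v=c\one_A$ for a parameter $c>0$ and integrated against $\muT$. After unpacking $h(\nu|\muT)=\int f\log f\,\d\muT-\nu(V)+\muT(V)$, the two correction terms cancel neatly against $\int e^{c\one_A}\,\d\muT=\muT(V)+(e^c-1)\muT(A)$, leaving
$$c\nu(A) \le h(\nu|\muT) + (e^c-1)\muT(A) \le \a + (e^c-1)\e.$$
Choosing $c = \log(1/\e)$ for $\e<1$ then yields $\nu(A) \le (\a+1)/\log(1/\e)$ uniformly over $\nu\in\MM_\a(\muT)$ and over Borel $A$ with $\muT(A)<\e$, which tends to $0$ as $\e \downarrow 0$. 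This is the step I would be most careful with, as the entropy correction terms must be tracked precisely for the cancellation to work.

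For (3) I would invoke the standard exponential Chebyshev estimate: for any $s>0$,
$$\P(N^{\e\la} > \la\de) \le e^{-s\la\de}\E[e^{sN^{\e\la}}] = \exp\bigl(-s\la\de + \e\la(e^s-1)\bigr),$$
so $\la^{-1}\log\P(N^{\e\la}>\la\de) \le -s\de + \e(e^s-1)$. Sending $\la\uparrow\infty$ and then $\e\downarrow 0$ leaves $-s\de$, and since $s>0$ is arbitrary the iterated $\limsup$ equals $-\infty$. Apart from the entropy bookkeeping in (2), no step presents a genuine obstacle.
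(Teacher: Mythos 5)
Your argument is correct in all three parts, and parts (1) and (3) coincide with what the paper invokes (uniform integrability of the density, and the Chernoff/exponential-Chebyshev bound which is what the ``Poisson concentration inequality'' amounts to). The only deviation is in part (2), where the paper says only that it ``can be shown using Jensen's inequality'' and otherwise defers to \cite{wireless3}. Your route via the pointwise Young inequality $uv \le u\log u - u + e^{v}$ (i.e.\ the Fenchel dual of $u\mapsto u\log u - u$) applied with $v = c\mathbf 1_A$ is the Donsker--Varadhan entropy inequality in disguise, and the bookkeeping is done correctly: integrating gives $c\nu(A) \le h(\nu|\mu_{\ms T}) + (e^c-1)\mu_{\ms T}(A)$, with the $-\nu(V)+\mu_{\ms T}(V)$ corrections cancelling exactly as you claim, and $c = \log(1/\e)$ yields the uniform rate $(\alpha+1)/\log(1/\e)$. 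The Jensen route would instead apply $\mu_{\ms T}(A)\,\phi\bigl(\nu(A)/\mu_{\ms T}(A)\bigr) \le \int_V \phi(f)\,\d\mu_{\ms T} = h(\nu|\mu_{\ms T})$ with $\phi(t)=t\log t - t + 1$, which gives the same conclusion but requires an additional step to bound $\nu(V)$ uniformly on $\MM_\alpha(\mu_{\ms T})$; your variational version bypasses that and also produces an explicit rate, so if anything it is slightly cleaner. No gaps.
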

\begin{proof}
	Part (1) rephrases the definition of absolute continuity. Part (2) can be shown using Jensen's inequality. 	Part (3) is a consequence of the Poisson concentration inequality~\cite[Chapter 2.2]{lugosi}. We refer the reader to~\cite[Lemma 3.1]{wireless3} for details.
\end{proof}
Next, we derive a simple yet powerful result on monotonicity of solutions of two specific differential equations. 
\begin{lemma}
	\label{solCurvesLem}
	Let $0 < a < a'$ and assume that $\nu \in \Mac(\muT)$ or $\nu \in \MM_\r(V)$ with $a - a' \in \rho\Z$. Let $A \in {\mc B}(\itf \times W)$, then the following holds.
	\begin{enumerate}
		\item If $b_t^a$ solves the equation
	$$b_t = a-\int_{0}^t\nu(\d s,A,[0, b_s]),$$
then $b_t^a \le b_t^{a'}$ for all $ t \le \TF$.
\item If $b_t^a$ solves the equation
	$$b_t = \int_{0}^t\nu(\d s,A,[0, a - b_s]),$$
then $b_t^a \le b_t^{a'}$ holds for all $t \le \TF$.
	\end{enumerate}
\end{lemma}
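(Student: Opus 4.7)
The plan is to reduce both parts of the lemma to a comparison statement about the difference $f(t) := b_t^{a'} - b_t^a$ and argue that $f$ cannot descend below zero. In part (1) one has the identity
\begin{equation*}
f(t) = (a'-a) + \int_0^t \bigl[\nu(\d s, A, [0, b_s^a]) - \nu(\d s, A, [0, b_s^{a'}])\bigr],
\end{equation*}
while in part (2) the constant term is absent and the intervals are $[0, a - b_s^a]$ and $[0, a' - b_s^{a'}]$. The key structural observation driving the argument is that in the region $\{s: f(s) < 0\}$ the corresponding integrand is non-negative: in part (1) because $b_s^a > b_s^{a'}$ yields an integrand equal to $\nu(\d s, A, (b_s^{a'}, b_s^a])$; in part (2) because $b_s^a > b_s^{a'}$ together with $a < a'$ gives $a - b_s^a < a' - b_s^{a'}$, so the integrand equals $\nu(\d s, A, (a - b_s^a, a' - b_s^{a'}])$. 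Thus each equation has a built-in self-correction preventing $f$ from crossing zero from above.

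For $\nu \in \Mac(\muT)$, the absolute continuity of $\muT$ in its time coordinate makes $t \mapsto b_t^a$ continuous, hence $f$ is continuous. Assuming for contradiction that $f(t_1) < 0$ for some $t_1 \le \TF$, I would set $\tau := \sup\{t \le t_1 : f(t) \ge 0\}$; by continuity $f(\tau) = 0$ and $f(s) < 0$ on $(\tau, t_1]$, and the sign observation above yields $f(t_1) - f(\tau) \ge 0$, contradicting $f(t_1) < 0$.

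For $\nu \in \MM_\rho(V)$ with $a - a' \in \rho\Z$, both $b^a$ and $b^{a'}$ are piecewise constant and jump only at the time coordinates of atoms of $\nu$, each jump of size $\rho$. I would induct over the jump times to show $f(\tau) \ge 0$, exploiting the fact that the lattice hypothesis forces $f$ to take values in $\rho\Z$, so the only way for $f$ to become negative is a transition from $0$ to $-\rho$. When $f(\tau-) = 0$ one has $b_{\tau-}^a = b_{\tau-}^{a'}$, and I would check that the jump rules prevent such a transition: in part (1) the jump conditions $u_\tau \le b_{\tau-}^a$ and $u_\tau \le b_{\tau-}^{a'}$ coincide, so the two solutions jump in lockstep; in part (2) the capacity satisfies $a - b_{\tau-}^a < a' - b_{\tau-}^{a'}$, so whenever $b^a$ increments, the same atom triggers a jump of $b^{a'}$. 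I expect this last bookkeeping in part (2) to be the main obstacle, since once $f > 0$ the two equations no longer have coinciding jump rules and $f$ may decrease; one must verify that the worst-case decrement is exactly $\rho$, which is harmless precisely because the lattice assumption keeps $f(\tau-)$ bounded below by $\rho$ whenever a decrease occurs.
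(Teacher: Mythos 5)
Your proposal is correct and follows essentially the same route as the paper: the same interval-inclusion monotonicity of the integrand, a last-crossing-time/contradiction argument in the absolutely continuous case, and the observation that $a'-a\in\rho\Z$ together with jumps of size $\rho$ confines $b^{a'}-b^a$ to the lattice $\rho\Z$ so it cannot skip over zero. Your forward induction over jump times in the empirical case is just a reorganization of the paper's contradiction argument, and your concluding worry about part (2) is already resolved by the lattice bound you state.
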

\begin{proof}
	For part (1), to derive a contradiction, assume that $b^{a}_t > b^{a'}_t$. 	Moreover, let $t_0 < t$ denote the last time before $t$ where $b^{a'}_{t_0} = b^{a}_{t_0}$. If $\nu$ is absolutely continuous, then the existence of $t_0$ follows from the continuity of the solutions $b^a_t$ and $b^{a'}_t$. If $\nu$ is an empirical measure, then $b^a_t$ and $b^{a'}_t$ are no longer continuous, but exhibit jumps of the same size $\rho$. In particular, the existence of $t_0$ follows from the assumption that $a - a' \in \rho\Z$.
	Then, 
	        $$b^{a'}_t = b^{a'}_{t_0} - \int_{[t_0, t)}\nu(\d s,  A,[0, b^{a'}_s]) \ge b^{a}_{t_0} - \int_{[t_0, t)}\nu(\d s,  A,[0,b^a_s]) = b^a_t,$$
		        which gives the desired contradiction. 

	For part (2) we argue similarly. More precisely, assume that $b^{a}_t > b^{a'}_t$. 
	Moreover, let $t_0 < t$ denote the last time before $t$ where $b^{a'}_{t_0}=b^{a}_{t_0}$. Then, 
		$$b^{a'}_t = b^{a'}_{t_0} + \int_{[t_0, t)}\nu(\d s, A, [0, a' - b^{a'}_s]) \ge b^{a}_{t_0} + \int_{[t_0, t)}\nu(\d s,  A,[0, a - b^a_s]) = b^a_t,$$
		        which again yields the desired contradiction. 
\end{proof}

The following lemma allows us to bound the discretization errors coming from sets of critical relays. 
\begin{lemma}
\label{stoDomLem}
Let $\{A^\de_*(\nu)\}_{\de, \nu}$ be a family of subsets of $V$ indexed by $\de > 0$ and $\nu \in \MM$. For $s \le \TF$ put 
$$A_s^\de(\nu) = \{(t, x, u):\, (s, t, x, u) \in A^\de_*(\nu)\}.$$
\begin{enumerate}
    \item If $\nu\in\Mac(\muT)$ and $\lim_{\de\downarrow0}\sup_{ s \le \TF}|A_s^\de(\nu)| = 0,$ then $\lim_{\de\downarrow0}\nu(A^\de_*(\nu)) = 0.$
    \item If $\lim_{\de\downarrow0}\sup_{\nu \in \MM_\a(\muT)}\sup_{ s \le \TF}|A_s^\de(\nu)| = 0$, then $\lim_{\de\downarrow0}\sup_{\nu \in \MM_\a(\muT)}\nu(A^\de_*(\nu)) = 0.$
    \item Assume that the process of sets $A_s^\de(r_\la^{-1}\Lla)$ is previsible. If $\sup_{ s \le \TF}|A_s^\de(r_\la^{-1}\Lla)|$ is an exponentially good approximation of zero, then $\Lla(A^\de_*(r_\la^{-1}\Lla))$ is an exponentially good approximation of zero.
\end{enumerate}
\end{lemma}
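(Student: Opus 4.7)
The plan is to reduce all three statements to a single Fubini-type geometric input combined with the absolute continuity of $\muT$ with respect to Lebesgue measure on $V$. Writing $|\cdot|$ for Lebesgue measure on $V$ (or on $\itf\times W\times[0,1]$ for cross-sections), Fubini gives
$$|A^\de_*(\nu)| \,=\, \int_0^\TF |A^\de_s(\nu)| \, \d s \,\le\, \TF\cdot\sup_{s\le \TF}|A^\de_s(\nu)|.$$
Since $\muT$ has a Lebesgue density, for every $\eta>0$ one can choose $\eps'>0$ such that $|B|<\TF\eps'$ implies $\muT(B)<\eta$. This converts smallness of cross-sections into smallness of $\muT$-mass, which is the entry point for Lemma~\ref{AbsoluteContinuity}.

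For part (1), I would fix $\nu\in\Mac(\muT)$. The hypothesis combined with the display above yields $|A^\de_*(\nu)|\to 0$, hence $\muT(A^\de_*(\nu))\to 0$, and then Lemma~\ref{AbsoluteContinuity}(1) applied to $\nu\ll\muT$ concludes. Part (2) is the identical argument carried out with suprema over $\nu\in\MM_\a(\muT)$ at every step, the last of which invokes Lemma~\ref{AbsoluteContinuity}(2) instead.

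Part (3) is the delicate one. Fix $\eps,M>0$; I would aim to find $\de_0$ such that $\de<\de_0$ implies $\limsup_\la \la^{-1}\log\P(\Lla(A^\de_*(\rla^{-1}\Lla))>\eps)<-M$. The choice of parameters runs in reverse: first, by Lemma~\ref{AbsoluteContinuity}(3), pick $\eta\in(0,\eps)$ with $\limsup_\la \la^{-1}\log\P(N^{\eta\la}>\eps\la)<-M$; next pick $\eps'>0$ so that $|B|<\TF\eps'$ forces $\muT(B)<\eta$; finally, by the exponential approximation hypothesis, pick $\de_0$ so that for $\de<\de_0$ the event $B^{\de,\eps'}=\{\sup_s|A^\de_s(\rla^{-1}\Lla)|>\eps'\}$ decays faster than $e^{-M\la}$. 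By previsibility, $\tau^\de=\inf\{s\le\TF:|A^\de_s(\rla^{-1}\Lla)|>\eps'\}$ is a stopping time, and on $(B^{\de,\eps'})^c$ one has $A^\de_*(\rla^{-1}\Lla)\subset A^{\de,\tau}_*:=A^\de_*(\rla^{-1}\Lla)\cap\{s<\tau^\de\}$, while by the Fubini bound $\muT(A^{\de,\tau}_*)\le\eta$ almost surely.

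The crux is the stochastic-domination step $\la\Lla(A^{\de,\tau}_*)\preceq N^{\eta\la}$. This follows from the classical exponential martingale for Poisson point processes applied to the previsible random set $A^{\de,\tau}_*$: the process $\exp(\theta\la\Lla(A^{\de,\tau}_*\cap\{s\le u\})-(e^\theta-1)\la\muT(A^{\de,\tau}_*\cap\{s\le u\}))$ is a supermartingale in $u$, and optimising $\theta$ at $u=\TF$ recovers the Chernoff bound $\P(\la\Lla(A^{\de,\tau}_*)>\eps\la)\le\P(N^{\eta\la}>\eps\la)$. A union bound against $B^{\de,\eps'}$ then yields the claim. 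The main obstacle is precisely this last domination: bookkeeping the filtration generated by $\Lla$ so that $\tau^\de$ is a genuine stopping time and so that the exponential-moment identity for Poisson integrals over a previsible random set applies; the statements on cross-sections in Lemma~\ref{stoDomLem} are tailored exactly so that this verification is possible, and all subsequent uses of the lemma will merely feed in concrete choices of $A^\de_*(\nu)$.
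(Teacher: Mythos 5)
Your parts (1) and (2) coincide with the paper's proof: the Fubini bound $|A^\de_*(\nu)|\le \TF\sup_{s}|A^\de_s(\nu)|$, absolute continuity of $\muT$ with respect to Lebesgue measure, and then Lemma~\ref{AbsoluteContinuity} parts (1) and (2). Part (3) is where you diverge. The paper conditions on the number of points and the ordered arrival times $(S_i)_{i\le n}$, peels off the last arrival, uses previsibility and conditional independence to replace the event $\{M_n\in A^\de_{S_n}\}$ by $\{B_n\le \mu_M(A^\de_{S_n})\}$ for an independent uniform $B_n$, and then iterates; the resulting count is split exactly as yours into a ``bad event'' where some cross-section is large and a thinned Poisson count $\#\{i: B_i\le\e'\}$ on the good event. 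You instead stop the previsible set at the first time a cross-section exceeds $\e'$ and apply the exponential (super)martingale for the compensated counting process $u\mapsto \la\Lla(A^{\de,\tau}_*\cap\{s\le u\})$, whose terminal compensator is at most $\eta\la$ by the same Fubini-plus-absolute-continuity step. Both routes are implementations of the same idea -- previsibility lets you dominate the hit count by a thinned Poisson -- but yours trades the paper's elementary induction and explicit conditioning for the marked-point-process martingale machinery (debut theorem for $\tau^\de$, identification of the compensator of a Poisson integral over a previsible random set). What your version buys is a cleaner union-bound structure and, as a free by-product, the extension to $\la$-dependent families $A^{\de,\la}_*$ that the paper only remarks on after its proof. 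One cosmetic imprecision: optimizing $\theta$ gives the Chernoff bound $\inf_\theta\exp(-\theta\e\la+(e^\theta-1)\eta\la)$, which is an upper bound for $\P(N^{\eta\la}>\e\la)$ rather than being dominated by that tail probability; since Lemma~\ref{AbsoluteContinuity}(3) is itself proved from exactly this Chernoff expression, the exponential rate you need is unaffected, but the inequality should be stated against the Chernoff bound rather than the Poisson tail itself.
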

\begin{proof}
As for part (1) first note that by Lemma~\ref{AbsoluteContinuity} part (1), $\nu(A^\de_*(\nu))$ is arbitrarily close to zero if $\muT(A^\de_*(\nu))$ is sufficiently close to zero. Moreover, again by part (1) of Lemma~\ref{AbsoluteContinuity} using the absolute continuity of $\muT$ w.r.t.~the Lebesgue measure, $\muT(A^\de_*(\nu))$ becomes arbitrarily small if $|A^\de_*(\nu)|$ is sufficiently small. But $|A^\de_*(\nu)| \le \TF \sup_{ s \le \TF}|A_s^\de(\nu)|$ and the result follows.

Similarly for part (2), by Lemma~\ref{AbsoluteContinuity} part (1) and (2), $\sup_{\nu \in \MM_\a(\muT)}\nu(A^\de_*(\nu))$ is arbitrarily close to zero if $\sup_{\nu \in \MM_\a(\muT)} |A^\de_*(\nu)|$ is sufficiently close to zero. But 
$$\sup_{\nu \in \MM_\a(\muT)}|A^\de_*(\nu)|\le\TF \sup_{\nu \in \MM_\a(\muT)}\sup_{s \le \TF}|A_s^\de(\nu)|$$
 and the result follows. 

As for part (3) we want to prove that for all $\e>0$
\begin{align*}
\limsup_{\de \downarrow 0}\limsup_{\la \uparrow \infty}\la^{-1}\log\P(\Lla(A^\de_*(r_\la\Lla)) > \e)=-\infty.
\end{align*}
First, we condition on the number of users to be $n$ as well as on the ordered entrance times $(S_i)_{i\le n}$. Then, denoting $M_i = (T_i,X_i,U_i)$  and $Z_i = (S_i, M_i)$,
\begin{align*}
&\P\Big(\Lla(A^\de_*(r^{-1}_\la\Lla)) > \e \big|(S_i)_{i \le n}\Big)\cr
&=\P\Big(\#\{i \le n-1:\,Z_i\in A^\de_*(r^{-1}_\la\Lla)\} +\one\{Z_n\in A^\de_*(r^{-1}_\la\Lla)\}> \la\e\big|(S_i)_{i\le n}\Big)\cr
&=\E\Big[\E\Big(\P\Big(\#\{i \le n-1:\,Z_i\in A^\de_*(r^{-1}_\la\Lla)\} +\one\{Z_n\in A^\de_*(r^{-1}_\la\Lla)\}> \la\e\Big| (S_i)_{i\le n}\Big)\Big|(M_i)_{i<n}\Big)\Big]\cr
&=\E\Big[\E\Big(\P\Big(\#\{i \le n-1:\,Z_i\in A^\de_*(r^{-1}_\la\Lla)\} +\one\{M_n\in A^\de_{S_n}(r^{-1}_\la\Lla)\}> \la\e\Big| (S_i)_{i\le n}\Big)\Big|(M_i)_{i<n}\Big)\Big].
\end{align*}
Now, let $B_n$ be an independent uniform random variable on $[0,\mu_M(V')]$ with $\mu_M$ the normalized part of $\mu_{\ms{T}}$ acting on $[0,\TF]\times W\times [0,1]$.
Using previsibility and that, by independence, the probability for $M_n\in A^\de_{S_n}(r^{-1}_\la\Lla)$ is equal to the probability that $ B_n\le\mu_{M}(A^\de_{S_n}(r^{-1}_\la\Lla))$, we get that $\P(\Lla(A^\de_*(r^{-1}_\la\Lla)) > \e \big|(S_i)_{i \le n})$ is bounded above by
\begin{align*}
\P(\#\{i \le n-1:\,Z_i\in A^\de_*(r^{-1}_\la\Lla)\} + \one\{ B_n\le \mu_{M}(A^\de_{S_n}(r^{-1}_\la\Lla))\}> \la\e\big| (S_i)_{i\le n}).
\end{align*}
Hence, by induction, 
\begin{align*}
\P(\Lla(A^\de_*(r^{-1}_\la&\Lla)) > \e \big|(S_i)_{i< n}) = \P(\#\{ i \le n:\,  B_i\le \mu_{M}(A^\de_{S_i}(r^{-1}_\la\Lla))\}> \la\e\big| (S_i)_{i\le n})\cr
    &\le\P(\sup_{i\le n}\mu_{M}(A^\de_{S_i}(r^{-1}_\la\Lla))>\e'\big| (S_i)_{ i\le n})+\P(\#\{ i \le n:\,  B_i\le \e'\}> \la\e\big| (S_i)_{i\le n})
\end{align*}
for all $\e'>0$. As for the second summand, applying the Poisson point process expectation w.r.t.~the conditioning we have 
    $\P(\#\{i \le |X^\la|:\, B_i\le \e'\}> \la\e)$ where by independent thinning, $\#\{i \le |X^\la|:\,  B_i\le\e'\}$ is a Poisson random variable with intensity $\la\mu_{\ms{T}}(V)\e'$. Using part (3) of Lemma~\ref{AbsoluteContinuity}, as $\la$ tends to infinity, this summand has arbitrarily fast exponential decay for $\e'$ tending to zero. As for the first summand, we use our assumption and again part (3) of Lemma~\ref{AbsoluteContinuity}.
\end{proof}

The proof of Lemma~\ref{stoDomLem} reveals that part (3) remains true if $A^\de_* = A^{\de, \la}_*$ is allowed to depend on $\la$. The following result is the main application of Lemma~\ref{stoDomLem}. It shows that in the limit of small discretizations, critical users are negligible. 
\begin{lemma}
\label{scaleCritLem}
\begin{enumerate}
\item If $\nu \in \Mac(\muT) \cup \Memp(V)$, then $\lim_{\de \downarrow 0}\addc{\de}{}_{\TF}(\nu) = 0$. 
\item If $\alpha > 0$ is arbitrary, then $\lim_{\de \downarrow 0}\sup_{\nu \in \MM_\alpha(\mu)}\addc{\de}{}_{\TF}(\nu) = 0$.
\item The random measures $\addc{\de}{}_{\TF}(\rla^{-1}\Lla)$ form an exponentially good approximation of zero.
\end{enumerate}
\end{lemma}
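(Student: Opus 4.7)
The plan is to realize $\addc{\de}{}_{\TF}(\nu)$ as the $\nu$-mass of an explicit ``critical set'' and then invoke Lemma~\ref{stoDomLem} in each of the three regimes. Summing the update rule for $\ac$ in~\eqref{SysODE} over $k=1,\dots,\TF/\de$ and evaluating at $t=\TF$, I obtain the pointwise bound
$$\addc{\de}{}_{\TF}(\nu)\le\nu(A^{\de,1}_\ast)+\nu(A^{\de,2}_\ast(\nu)),$$
where $A^{\de,1}_\ast=\bigcup_{k=1}^{\TF/\de}\D_\de(k-1)\times\D_\de(k-1)\times W\times[0,1]$ collects transmitters whose entry and exit fall into the same discretization window, and
$$A^{\de,2}_\ast(\nu)=\bigcup_k\Big\{(s,t,x,u):\,s\in\D_\de(k-1),\,t\in(k\de,\TF],\,u\in(\addi{\de}_{s-}(\nu),\addi{\de}_{s-}(\nu)+\addo{\de}{,k-1}_{s-}(\nu)]\Big\}$$
collects transmitters whose request lands on the mass of relays scheduled to be released in the current window.

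The crucial geometric input comes from the bounds on time-$s$ slices. For any $s\in\D_\de(k-1)$ one has
$$|A^{\de,1}_s|\le\de\cdot|W|\quad\text{and}\quad|A^{\de,2}_s(\nu)|\le\TF\cdot|W|\cdot\addo{\de}{,k-1}_{s-}(\nu),$$
the second because the $u$-interval has length $\addo{\de}{,k-1}_{s-}(\nu)$. Iterating the update equation for $\ao{,k-1}$ back to $t=0$ and using $\addi{\de}_{s-}\le1$ yields the key a-priori estimate
$$\addo{\de}{,k-1}_{s-}(\nu)\le\nu(\itf,\D_\de(k-1),W,[0,1]).$$
Since $\muT(\itf\times\D_\de(k-1)\times W\times[0,1])$ is of order $\de$ uniformly in $k$, Lemma~\ref{AbsoluteContinuity}(1) drives the right-hand side to zero uniformly in $k$ as $\de\downarrow0$ for any fixed $\nu\in\Mac(\muT)$. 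Hence $\sup_{s\le\TF}|A^\de_s(\nu)|\to0$, and Lemma~\ref{stoDomLem}(1) gives part~(1) on the absolutely continuous side.

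For part~(2) the same argument goes through with Lemma~\ref{AbsoluteContinuity}(2) replacing~(1), giving $\sup_{\nu\in\MM_\alpha(\muT)}\sup_k\nu(\itf,\D_\de(k-1),W,[0,1])\to0$ and hence the uniform statement via Lemma~\ref{stoDomLem}(2). For part~(3), $s\mapsto A^\de_s(\rla^{-1}\Lla)$ is previsible since $\addi{\de}_{s-}$ and $\addo{\de}{,k-1}_{s-}$ depend only on $\Lla$ restricted to $[0,s)$; by the slice estimates above, it remains to show that $\sup_k\rla^{-1}\Lla(\itf,\D_\de(k-1),W,[0,1])$ forms an exponentially good approximation of zero at scale $O(\de)$. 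This will follow from a union bound over the $\TF/\de$ windows together with the Poisson tail estimate of Lemma~\ref{AbsoluteContinuity}(3), and Lemma~\ref{stoDomLem}(3) then concludes. The empirical case $\nu\in\Memp(V)$ in part~(1) is dispatched by a direct atom-wise argument: once $\de$ is smaller than the minimal pairwise gap among the finitely many times $\{S_i,T_i\}_i$ of the atoms of $\nu$, no window can contain two such times, so neither type~1 nor type~2 criticality can occur and $\addc{\de}{}_{\TF}(\nu)=0$.

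The main obstacle will be controlling the dynamically generated quantity $\addo{\de}{,k-1}_{s-}(\nu)$, which is not a direct function of $\nu$. The saving observation is that all mass entering this bucket is contributed by transmitters with exit time in the $\de$-thin slab $\D_\de(k-1)$, reducing the problem to a uniform-in-$k$ small-measure property of $\nu$ relative to $\muT$; the three parts of Lemma~\ref{AbsoluteContinuity} supply exactly this property in the three regimes, which is what makes the parallel proof via Lemma~\ref{stoDomLem} possible.
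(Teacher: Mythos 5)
Your proposal is correct and follows essentially the same route as the paper's proof: the same decomposition of $\addc{\de}{}_{\TF}(\nu)$ into the mass of the two critical sets, the same slice bounds $|A^{\de,1}_s|\le\de|W|$ and $|A^{\de,2}_s(\nu)|\lesssim\addo{\de}{,k-1}_{s-}(\nu)\le\nu(\itf\times\D_\de(k-1)\times W\times[0,1])$, and the same application of Lemma~\ref{AbsoluteContinuity} and Lemma~\ref{stoDomLem} parts (1)--(3) in the three regimes, with the identical atom-separation argument for the empirical case.
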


\begin{proof}
First note that 
	$$\addc{\de}{}_{\TF}(\nu) = \nu(A_1^\de(\nu)) + \nu(A_2^\de(\nu)),$$
	where 
	$$A_1^\de(\nu) = \itf \times \D_\de(\lfloor s/\de\rfloor)\times W \times [0,\addi{\de}_{s-}(\nu)],$$
	and 
	$$A_2^\de(\nu) = \itf \times [(\lfloor s/\de\rfloor+1)\de, \TF] \times W\times [\addi{\de}_{s-}(\nu), \addi{\de}_{s-}(\nu) + \addo{\de}{,\lfloor s/\de\rfloor}_{s-}(\nu)].$$
	For part (1), note that if $\nu\in\Memp(V)$, then for $\de< \min_{i\ge1} (T_i-S_i)$ we have $\nu(A_1^\de(\nu))=0$ since $\nu(\{(s, t, x, u):\, (s, t, x, u) \in  \itf \times \D_\de(\lfloor s/\de\rfloor)\times W \times [0,1]\})=0$. Further, for sufficiently small $\de<\min_{i,j\ge1}|S_i-T_j|$ such that all entrance and exit times are well separated, also $\nu(A_2^\de(\nu))=0$.
	If $\nu\in\Mac(\muT)$, we show that both $A_1^\de(\nu)$ and $A_2^\de(\nu)$ satisfy the conditions of Lemma~\ref{stoDomLem} part (1). For $A_1^\de(\nu)$ this is clear, since 
$|A_{1,s}^\de(\nu)| \le \de |W|$ holds for every $s \le \TF$. For $A_2^\de(\nu)$ we have that 
	$$|A_{2,s}^\de(\nu)|\le \addo{\de}{,\lfloor s/\de\rfloor}_s(\nu) \le \nu(\itf \times \D_\de(\lfloor s/\de\rfloor)\times W \times [0,1]).$$
    Moreover,
	$$\lim_{\de \downarrow 0}\sup_{k \le \TF/\de - 1} |\itf \times\D_\de(k)\times W \times [0,1]| = 0,$$
so that the conditions of Lemma~\ref{stoDomLem} are satisfied. This finishes the proof of part (1). The last expression, using Lemma~\ref{stoDomLem} part (2), also implies that
$$\lim_{\de \downarrow 0}\sup_{k \le \TF/\de - 1}\sup_{\nu \in \MM_\a} \nu(\itf \times \D_\de(k) \times W\times [0,1]) = 0,$$ 
which completes the proof of part (2). To conclude the proof of part (3) observe that 	
\begin{align*}
\P(\sup_{s \le \TF} |A_{2,s}^\de(r_\la^{-1}\Lla)| > \e)&\le\P( \sup_{k \le \TF/\de - 1}\Lla(\itf \times \D_\de(k)\times W \times [0,1]) > \e r_\la^{-1})\cr
&\le\sum_{k \le \TF/\de - 1} \P( \Lla(\itf \times \D_\de(k)\times W \times [0,1]) > \e r_\la^{-1}),
	\end{align*}
    and hence by Lemma~\ref{AbsoluteContinuity} part (3), $\sup_{s \le \TF} \Lla(A_{2,s}^\de(r_\la^{-1}\Lla))$ indeed is an exponentially good approximation of zero.
\end{proof}

\subsection{Markovian representations}
\label{markovSec}
For the proof of Proposition~\ref{markRepLem} it will be convenient to consider the random measure $B$ of satisfied users defined by
$$B(\d s,\d t,\d x) = \Lla(\d s,\d t,\d x, [0,1]) - \G^\la(\d s,\d t,\d x).$$
\begin{proof}[Proof of Proposition~\ref{markRepLem}]
	The strategy of proof is to first condition on $\{(S_i, T_i, X_i)\}_{i \le N_\la}$ and then to show that the pair 
	$$\Big(\G^\la, \{B([0, t]\times[t, \TF]\times W)\}_{t \le \TF}\Big)$$
	has the same distribution as the pair 
	$$\Big(\tilde\G, \{\tilde B_t\}_{t \le \TF}\Big).$$
	For this note that, after the conditioning, both pairs become time-inhomogeneous Markov chains with jumps at times $S_i$ and $T_i$ of height $\la^{-1}$. Hence, it suffices to prove that the transition probabilities of the Markov chains coincide.
	
	Assume that there is an arrival $(S_i, T_i, X_i)$ at time $S_i$. In that case, there is a probability of 
	$1 - \rla^{-1}B([0, S_i-] \times [S_i,\TF] \times W)$ 
	of hitting an idle relay. If this happens, then $B([0, S_i-] \times [S_i,\TF] \times W)$ increases by $\la^{-1}$ and the random measure $\G^\la$ stays constant.
	Otherwise $B([0, S_i-] \times [S_i,\TF] \times W)$ stays constant and $\G^\la$ contains $(S_i, T_i, X_i)$ as an atom.
	Similarly, with probability $1 - \rla^{-1}\tilde B_{S_i-}$ the random variable $U_i$ is at most $1 - \rla^{-1}\tilde B_{S_i-}$. Then, $\tilde B_{S_i-}$ increases by $\la^{-1}$ and the random measure $\tilde\G^\la$ stays constant. Otherwise, $\tilde B_{S_i-}$ stays constant and the random measure $\tilde\G^\la$ contains $(S_i, T_i, X_i)$ as an atom.
	 At times $T_i$, in both cases, there is a deterministic decrease by $\la^{-1}$ if and only if the random measures contain $(S_i, T_i, X_i)$ as an atom.
\end{proof}

\subsection{Existence of a unique solution for the approximation}
\label{existAppSec}
Clearly, the system~\eqref{SysODE} has a unique solution if the driving measure is an empirical measure. In the large-deviation analysis of the high-density limit the empirical measures $\Lla$ are replaced by measures $\nu\in\Mac(\muT)$. Hence, we would expect that also the rare-event behavior of the derived quantity $\addi{\de}(\Lla)$, which is one component of the solution of~\eqref{SysODE}, can be expressed in terms of $\addi{\de}(\nu)$. In the next result, we show that $\addi{\de}(\nu)$ is well-defined if $\nu\in\Mac(\muT)$.
\begin{proof}[Proof of Proposition~\ref{exUnAppProp}]
First, existence and uniqueness only need to be verified for $t \mapsto \ai_t$ and $t \mapsto \ao{, k - 1}_t$ where we suppress the $\de$-dependence in the notation. Indeed, the remaining quantities are computed from them by explicit integration. By induction on $k$, it suffices to prove existence and uniqueness on each of the intervals $\D_\de(k-1)$, $k \ge 1$.

To begin with, we consider $\ai$. In order to work with increasing functions, we put
	$$\bi_t = \ai_{(k - 1)\de} - \ai_t,$$
	so that the differential equation becomes 
	$$\bi_t=\int_{(k-1)\de}^t\nu(\d s, ((k-1)\de, \TF], W, [0, \ai_{(k - 1)\de} - \bi_{s-}]).$$
	Moreover, introducing the measure 
	$$\wt{\nu}(\d s, \d u) = \nu(\d s, ((k-1)\de, \TF], W, \ai_{(k - 1)\de} \cdot \d u),$$ 
	this defining differential equation is transformed into
	$$\bi_t=\int_{(k-1)\de}^t\wt{\nu}(\d s, [0, 1 - \bi_{s-}/\ai_{(k-1)\de}]).$$
	In particular, the integral operator on the r.h.s.~is decreasing in $\bi_s$, so that existence and uniqueness of solutions are a consequence of~\cite[Proposition 2.2]{wireless3}.

	For $\ao{, k-1}$, we can proceed similarly. Indeed, after replacing $\nu$ by 
	$$\wt{\nu}(\d s, \d u) = \nu(\d s, ((k-1)\de, \TF], W, \ai_{s-} + \ao{,k-1}_{(k-1)\de} \cdot \d u)$$
	 existence and uniqueness of $\ao{,k-1}$ is again covered by~\cite[Proposition 2.2]{wireless3}.
\end{proof}

\subsection{Existence of solutions}
\label{existSec}
In this subsection, we show that taking the limit $\de\downarrow0$ in the approximating solutions $\g^\de(\nu,r)$ gives rise to a solution of the original system. First, we use Lemma~\ref{solCurvesLem} to show that the approximations are monotone w.r.t.~the discretization parameter $\de$. For this, we introduce the short hand notation $\nu(\D_\de(k))=\nu(\D_\de(k)\times\itf\times W\times[0,1])$.
\begin{lemma}
	\label{discMonLem}
	Let $\de>0$ and $\nu\in\Mac(\muT) \cup \Memp(V)$ be arbitrary and put $\de' = \de/2$. Then, for every $t \le \TF$ and $k \le \TF/\de - 1$
	\begin{enumerate}
		\item $\addi{\de'}_{t}(\nu) \ge \addi{\de}_t(\nu)$ and 
        \item $\sup_{n \ge 1}\sup_{t \le \TF}\big(\addi{2^{-n}\de}_t(\nu) - \addi{\de}_{t}(\nu)\big) \le \addc{\de}{}_{\TF}(\nu) + 2\sup_{l}\nu(\D_\de(l)).$
	\end{enumerate}
\end{lemma}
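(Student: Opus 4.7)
The plan is to prove part (1) by induction on the index $k$ of the $\de$-intervals $\D_\de(k-1)$. Assume inductively that $\addi{\de'}_{(k-1)\de}(\nu)\ge\addi{\de}_{(k-1)\de}(\nu)$; I will propagate this across both $\de'$-sub-intervals $((k-1)\de,(k-1/2)\de]$ and $((k-1/2)\de,k\de]$ that together fill $\D_\de(k-1)$. On the first sub-interval, the two schemes satisfy exactly the same integral equation: the exit-time range $((k-1)\de,\TF]$ agrees, the integrand threshold $[0,\ai_{s-}]$ has the same form, and only the starting values differ; hence Lemma~\ref{solCurvesLem}(1) applied to $\nu$ restricted appropriately immediately propagates the inequality. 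At the internal boundary $(k-1/2)\de$, only $\addi{\de'}$ undergoes an upward jump of size $\addo{\de'}{,2k-2}_{(k-1/2)\de-}(\nu)\ge 0$, so the ordering is preserved.

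On the second sub-interval the two equations diverge: the $\de$-process still integrates $\nu$ over exit times in $((k-1)\de,\TF]$, whereas the $\de'$-process only uses $((k-1/2)\de,\TF]$. I would split off the extra portion of the $\addi{\de}$-integral coming from exit times in $((k-1)\de,(k-1/2)\de]$, which only subtracts further from $\addi{\de}$, so that $\addi{\de}_t\le\tilde a^\de_t$ for an auxiliary process $\tilde a^\de$ obeying the same equation as $\addi{\de'}$ on this sub-interval. A second application of Lemma~\ref{solCurvesLem}(1) with starting values $\addi{\de'}_{(k-1/2)\de}\ge\addi{\de}_{(k-1/2)\de}=\tilde a^\de_{(k-1/2)\de}$ then gives $\addi{\de'}\ge\tilde a^\de\ge\addi{\de}$ on $((k-1/2)\de,k\de)$. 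The jump at $k\de$ is the crucial step: using the mass-conservation identity $\ai+\sum_j\ao{,j}+\ac=1$, which follows from a direct case analysis of the system~\eqref{SysODE}, one expresses
\[\addi{\de'}_{k\de}-\addi{\de}_{k\de}=\bigl(\addc{\de}{}_{k\de-}-\addc{\de'}{}_{k\de-}\bigr)+\Big(\sum_{j\ge k}\addo{\de}{,j}_{k\de-}-\sum_{j\ge 2k}\addo{\de'}{,j}_{k\de-}\Big),\]
and the proof reduces to checking that both bracketed terms are non-negative, encoding the intuition that finer discretization routes fewer arrivals into critical and future-occupied buckets.

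For part (2), iterating part (1) yields a monotone non-decreasing sequence $\{\addi{2^{-n}\de}_t\}_n$, whose supremum equals its limit. Using the same conservation identity, the gap $\lim_n\addi{2^{-n}\de}_t-\addi{\de}_t$ is bounded by $\addc{\de}{}_\TF(\nu)$, which absorbs the mass that the coarser $\de$-scheme misroutes into criticality, plus a correction coming from atoms of $\nu$ concentrated near the $\de$-boundaries that can be differently allocated between the $\ao{,j}$-buckets across the two schemes. Since each such misallocation is bounded by $\sup_l\nu(\D_\de(l))$ and occurs at most at the left and right endpoints of the $\de$-window containing $t$, the factor $2$ suffices to yield the claimed estimate.

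The hardest part will be the endpoint-jump comparison in part (1): because the $\de'$-scheme performs releases at both $(k-1/2)\de$ and $k\de$ whereas the $\de$-scheme releases only at $k\de$, the individual $\ao{,j}$-buckets are aggregated differently across the two schemes, and establishing the two non-negativity inequalities above will require careful bookkeeping of the full state vector $(\ai,\{\ao{,j}\}_j,\ac)$ rather than $\ai$ alone.
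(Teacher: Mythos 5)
Your overall skeleton---induction over the $\de$-windows for part (1) with Lemma~\ref{solCurvesLem} driving the interior comparison, and the conservation identity $\ai+\sum_j\ao{,j}+\ac=1$ combined with part (1) for part (2)---matches the paper's. The interior steps are essentially fine (your auxiliary process $\tilde a^\de$ on the second half-window needs a mild variant of Lemma~\ref{solCurvesLem}(1) in which the exit-time set, not the initial value, is varied, but the same last-crossing argument proves it). The genuine gap is at the step you yourself call crucial, the jump at $t=k\de$. You reduce the claim to the non-negativity of both brackets in $\bigl(\addc{\de}{}_{k\de-}-\addc{\de'}{}_{k\de-}\bigr)+\bigl(\sum_{j\ge k}\addo{\de}{,j}_{k\de-}-\sum_{j'\ge 2k}\addo{\de'}{,j'}_{k\de-}\bigr)$, on the intuition that the finer scheme routes \emph{fewer} arrivals into the future-occupied buckets. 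That intuition is backwards: since $\addi{\de'}_{s-}\ge\addi{\de}_{s-}$ for $s<k\de$ (induction hypothesis plus the interior comparison), every arrival landing in $[0,\addi{\de}_{s-}]$ also lands in $[0,\addi{\de'}_{s-}]$, so the finer buckets dominate, $\addo{\de'}{,2j}_{k\de}+\addo{\de'}{,2j+1}_{k\de}\ge\addo{\de}{,j}_{k\de}$ (this is the paper's auxiliary claim (1a)). Hence your second bracket is $\le 0$, not $\ge 0$; and the non-negativity of the first bracket $\addc{\de}{}-\addc{\de'}{}$ is, via the conservation identity and (1a), equivalent to part (1) itself, so even a corrected bucketwise reduction would be circular. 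What is actually needed at $k\de$ is a comparison of the combined masses that become idle there: one shows $\addi{\de'}_t+\addo{\de'}{,2k-2}_t+\addo{\de'}{,2k-1}_t\ge\addi{\de}_t+\addo{\de}{,k-1}_t$ throughout $\D_\de(k-1)$, exploiting that the sums $\ai+\ao{,k-1}$ satisfy closed decreasing integral equations of the form covered by Lemma~\ref{solCurvesLem}(1) (with a separate increment comparison when the initial ordering of the sums fails). This is the paper's route and it bypasses any sign comparison of the individual $\ac$ and $\ao{,j}$ differences.

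Your part (2) accounting is off in the same direction. The term $2\sup_l\nu(\D_\de(l))$ does not come from atoms of $\nu$ near the window boundaries; it comes from within-window increments: at the last boundary $(k-1)\de$ the bucket difference $\sum_j\addo{\de}{,j}-\sum_{j'}\addo{2^{-n}\de}{,j'}$ is non-positive by (1a), and after that the coarse buckets can increase, and the fine buckets decrease, each by at most the arrival mass $\nu(\D_\de(k-1))$ of the current window. The term $\addc{\de}{}_{\TF}$ is obtained by simply dropping $\addc{2^{-n}\de}{}_t\ge0$; no comparison between the two critical masses is used or available.
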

\begin{proof}
    To lighten notation, we suppress the $\nu$-dependence as well as the $W$-dependence in the notation in the proof. 
    First, we show that the asserted inequality in (2) is a consequence of part (1)  and 
	\begin{enumerate}
		\item[(1a)] $\addo{\de'}{,2j}_{k\de}(\nu) +\addo{\de'}{,2j+1}_{k\de}(\nu) \ge  \addo{\de}{,j}_{k\de}(\nu)$ for all $j, k \le \TF/\de - 1$.
	\end{enumerate}
Indeed, using part (1) and the fact that $1 - \addi{\de}_t = \addc{\de}{}_t + \sum_{j\ge0}\addo{\de}{,j}_t,$ we have
	\begin{align*}
\addi{2^{-n}\de}_t(\nu) - \addi{\de}_{t}(\nu)
&=(\addc{\de}{}_t -\addc{2^{-n}\de}{}_t) + \Big( \sum_{j\ge0}\addo{\de}{,j}_t-\sum_{j'\ge0}\addo{2^{-n}\de}{,j'}_t\Big).
	\end{align*}
Here, the first summand is bounded from above by $\addc{\de}{}_t\le \addc{\de}{}_{\TF}$. By part (1a), the second summand is bounded from above by 
\begin{align}\label{Est01}
\sum_{j\ge0}(\addo{\de}{,j}_t-\addo{\de}{,j}_{(k-1)\de}) + \sum_{j'\ge0}(\addo{2^{-n}\de}{,j'}_{(k-1)\de}-\addo{2^{-n}\de}{,j'}_t).
\end{align}
Note that by monotonicity inside the discretization, the second summand in \eqref{Est01} is bounded from above by 
\begin{align*}
\addo{2^{-n}\de}{,k-1}_{(k-1)\de}-\addo{2^{-n}\de}{,k-1}_t\le \nu(\D_\de(k-1)).
\end{align*}
Similarly, the first summand in \eqref{Est01} can be bounded from above by 
\begin{align*}
\sum_{j\ge k}(\addo{\de}{,j}_t-\addo{\de}{,j}_{(k-1)\de})\le \sum_{j\ge k}\nu(\D_\de(k-1)\times\D_\de(j)\times[0,1])\le \nu(\D_\de(k-1)).
\end{align*}
    Next, we prove (1) and (1a) by induction over $k$. That is, let us assume that part (1) holds for $t\le (k-1)\de$ and
part (1a) holds for $(k-1)\de$. First, part (1a) is trivial for $j< k$. If $j\ge k$, then part (1a) follows from the defining integral formula once part (1) is shown.

\medskip
For part (1), we consider the cases $t \in ((k-1)\de, (k-1/2)\de]$, $t \in ((k-1/2)\de, k\de)$ and $t = k\de$ separately. The case $t \in ((k-1)\de, (k-1/2)\de]$ is a consequence of Lemma~\ref{solCurvesLem} part (1) with $a=\addi{\de}_{(k-1)\de}$ and $a'=\addi{\de'}_{(k-1)\de}$. For $t \in ((k-1/2)\de, k\de)$ similar arguments apply. 
	Finally, assume that $t = k\de$, so that 
	$$\addi{\de}_{k\de} = \addi{\de}_{k\de-} + \addo{\de}{, k-1}_{k\de-}\quad\text{ and }\quad
	\addi{\de'}_{k\de} = \addi{\de'}_{k\de-} + \addo{\de'}{, 2k-1}_{k\de-}.$$
	We show, more generally, that for every $t \in ((k-1)\de, k\de)$,
	\begin{align}
		\label{discMonEq}
		\addi{\de'}_t + \addo{\de'}{, 2k-2}_t + \addo{\de'}{,2k-1}_t \ge \addi{\de}_t + \addo{\de}{,k-1}_t.
	\end{align}
	Indeed, if 
	$$\addi{\de'}_{(k-1)\de} + \addo{\de'}{, 2k-2}_{(k-1)\de} \ge \addi{\de}_{(k-1)\de} + \addo{\de}{,k-1}_{(k-1)\de},$$
	then, as in the case $t \in ((k-1)\de, (k-1/2)\de]$ considered above, we use Lemma~\ref{solCurvesLem} part (1). Otherwise, applying Lemma~\ref{solCurvesLem} part (1) inside the integral, for $t \in ((k-1)\de, (k-1/2)\de]$,
	\begin{align*}
		(\addi{\de'}_{t} + \addo{\de'}{,2k - 2}_{t}) - (\addi{\de'}_{(k-1)\de} + \addo{\de'}{,2k - 2}_{(k-1)\de}) 
		&= -\int_{((k-1)\de, t]}\nu(\d s, \itf, [0,\addi{\de'}_{s-} + \addo{\de'}{,2k - 2}_{s-}])\\
		&\ge -\int_{((k-1)\de, t]}\nu(\d s, \itf, [0,\addi{\de}_{s-} + \addo{\de}{,k - 1}_{s-}]) \\
		&= (\addi{\de}_{t} + \addo{\de}{,k-1}_{t}) - (\addi{\de}_{(k-1)\de} + \addo{\de}{,k-1}_{(k-1)\de}).
	\end{align*}
	In particular, by induction hypothesis,
	\begin{align*}
		\addi{\de'}_{t} + \addo{\de'}{, 2k-2}_{t} + \addo{\de'}{,2k - 1}_{t}  &\ge \addi{\de'}_{(k-1)\de} + \addo{\de'}{,2k - 2}_{(k-1)\de} + \addo{\de'}{,2k - 1}_{t} \\
													     &+ (\addi{\de}_{t} + \addo{\de}{,k-1}_{t}) - (\addi{\de}_{(k-1)\de} + \addo{\de}{,k-1}_{(k-1)\de})\ge \addi{\de}_{t} + \addo{\de}{,k-1}_{t}.
\end{align*}
Therefore, for $t\in ((k-1/2)\de, k\de)$ the assertion is again a consequence of Lemma~\ref{solCurvesLem} part (1). This completes the proof of~\eqref{discMonEq} and thereby of the lemma.
\end{proof}

Lemma~\ref{discMonLem} in particular implies that in the definition $\b_t(\nu, r)= r - \limsup_{\de\downarrow0}  r\addi{\de}_t(r^{-1}\nu)$ the limes superior is in fact a limit. 
We are now in the position to prove Proposition~\ref{approxScalProp}.
\begin{proof}[Proof of Proposition~\ref{approxScalProp}]
	First, note that 
		\begin{align*}
		\int_0^t\nu(\d s, [t, \TF],W,[0, \addi{\de}_{s-}(r^{-1}\nu)]) = r\sum_{j \ge 0} \addo{\de}{, j}_{t}(r^{-1}\nu) = r - r\addi{\de}_{t}(r^{-1}\nu) - r\addc{\de}{}_{t}(r^{-1}\nu).
	\end{align*}
	Hence, by monotone convergence, 
	\begin{equation*}
	\begin{split}
		\int_0^t\nu(\d s, [t, \TF],W,[0, 1 - \b_{s-}(\nu,r)/r]) &= \lim_{\de\downarrow0}  \int_0^t\nu(\d s,[t,\TF],W,[0,\addi{\de}_{s-}(r^{-1}\nu)])\cr
		&=\b_t(\nu, r) -r \lim_{\de\downarrow0}\addc{\de}{}_{t}(r^{-1}\nu).
	\end{split}
	\end{equation*}
	Now, Lemma~\ref{scaleCritLem} part (1) implies that $\lim_{\de\downarrow0}\addc{\de}{}_{t}(r^{-1}\nu)=0$ which 
	completes the proof.
\end{proof}

\subsection{Continuity for the approximation}
\label{contSec}
To prepare the proof of continuous dependence of the unique solution of~\eqref{SysODE} w.r.t.~the driving measure, we present two auxiliary results showing continuous dependence in simpler settings. 

\begin{lemma}
\label{contAuxLem0}
Let $a_\cdot(\cdot): \itf \times \Mac(\muT) \to [0,1]$ be a function such that 1) $\nu\mapsto a_s(\nu)$ is $\tau$-continuous for every $s \le \TF$ and 2) $s\mapsto a_s(\nu)\in[0,1]$ is piecewise continuous and monotone for every $\nu\in\Mac(\muT)$. Then, also the map 
$$\Phi:\, \nu \mapsto \nu(\d s, \d t, \d x, (a_s(\nu)+\d u)\cap[0, 1])$$
is continuous on $\Mac(\muT)$ where continuity is tested on sets of the form $A\times [a,b]$ with $A\in\mc{B}(\itf^2\times W)$ and $-1\le a<b \le 1$. 
\end{lemma}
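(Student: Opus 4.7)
The plan is to verify $\tau$-continuity of $\Phi$ by showing that for any $\tau$-convergent sequence $\nu_n \to \nu$ in $\Mac(\muT)$ and any Borel test set $A \times [c,d]$ with $A \in \mc{B}(\itf^2 \times W)$ and $-1 \le c < d \le 1$, one has $\Phi(\nu_n)(A \times [c,d]) \to \Phi(\nu)(A \times [c,d])$. Introduce the Borel set
$$B(\mu) := \{(s,t,x,u) \in V : (s,t,x) \in A,\ u \in (a_s(\mu)+[c,d]) \cap [0,1]\},$$
which is indeed measurable because $s \mapsto a_s(\mu)$ is piecewise continuous and monotone by hypothesis~(2). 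Then $\Phi(\mu)(A \times [c,d]) = \mu(B(\mu))$, and I would split the difference as
$$\nu_n(B(\nu_n)) - \nu(B(\nu)) = \bigl(\nu_n(B(\nu)) - \nu(B(\nu))\bigr) + \bigl(\nu_n(B(\nu_n)) - \nu_n(B(\nu))\bigr),$$
so that it suffices to handle the two summands separately.

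The first summand vanishes immediately: $B(\nu)$ is a fixed Borel subset of $V$, so $\tau$-convergence $\nu_n \to \nu$ gives $\nu_n(B(\nu)) \to \nu(B(\nu))$. The second summand is bounded in modulus by $\nu_n(B(\nu_n) \triangle B(\nu))$. To control this I would first show that $\muT(B(\nu_n) \triangle B(\nu)) \to 0$: hypothesis~(1) implies $a_s(\nu_n) \to a_s(\nu)$ pointwise in $s$, and the $u$-slice of $B(\nu_n) \triangle B(\nu)$ above $(s,t,x) \in A$ has Lebesgue length at most $2|a_s(\nu_n) - a_s(\nu)|$, so since the $u$-marginal of $\muT$ is uniform on $[0,1]$, dominated convergence against $\muST$ on $A$ yields the claim.

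The main obstacle is passing from $\muT$-smallness of $B(\nu_n) \triangle B(\nu)$ to $\nu_n$-smallness \emph{uniformly in $n$}, since term by term absolute continuity of a single $\nu_n$ is not enough. The natural tool is the Vitali--Hahn--Saks theorem: since each $\nu_n$ is absolutely continuous with respect to $\muT$ and $\nu_n(E) \to \nu(E)$ holds for every $E \in \mc{B}(V)$, the family $\{\nu_n\}_n$ is uniformly absolutely continuous with respect to $\muT$, i.e.\ the quantitative version of Lemma~\ref{AbsoluteContinuity}(1) holds uniformly in $n$. Combining this with $\muT(B(\nu_n) \triangle B(\nu)) \to 0$ then forces $\nu_n(B(\nu_n) \triangle B(\nu)) \to 0$ and closes the argument. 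If continuity must be tested against nets rather than sequences, the same scheme applies after extracting a countable subnet from any putative counterexample, since the bound reached via Vitali--Hahn--Saks depends only on countably many $\nu_n$.
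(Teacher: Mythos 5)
Your decomposition into the two error terms is sound, and \emph{for sequences} the argument is correct: Vitali--Hahn--Saks does upgrade termwise absolute continuity of the setwise-convergent sequence $\{\nu_n\}$ to uniform absolute continuity with respect to $\muT$, which together with $\muT(B(\nu_n)\triangle B(\nu))\to 0$ kills the second summand. The genuine gap is in your final remark. The $\tau$-topology is not first countable, so continuity of $\Phi$ cannot be verified on sequences, and the proposed repair --- extracting ``a countable subnet'' from a putative counterexample net --- does not work: a countable subfamily of a $\tau$-convergent net indexed by a general directed set is not a subnet and need not $\tau$-converge to $\nu$, so there is no sequence to which Vitali--Hahn--Saks could be applied. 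The obstruction is not cosmetic. A basic $\tau$-neighborhood of $\nu$ is cut out by finitely many conditions $|\nu'(E_i)-\nu(E_i)|<\eta$, and such a neighborhood always contains measures of the form $\nu'=\nu+\eta'\,\muT(\cdot\cap B)/\muT(B)$ with $\eta'<\eta$ and $\muT(B)$ arbitrarily small; these put mass $\eta'$ on a set of arbitrarily small $\muT$-measure. Hence no uniform absolute continuity holds over $\tau$-neighborhoods, and any argument requiring ``$\nu'(B)$ small whenever $\muT(B)$ is small, uniformly over $\nu'$ near $\nu$'' cannot be made to work for nets. Since this lemma feeds Proposition~\ref{contProp} into an approximate contraction principle, genuine topological continuity is what is needed.

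The paper's proof circumvents exactly this point by never evaluating $\nu'$ on a set it cannot control. It partitions $\itf$ into finitely many intervals compatible with the piecewise structure of $s\mapsto a_s(\nu)$, replaces $a_s(\nu)$ and $a_s(\nu')$ by their values at the finitely many grid points (using continuity and monotonicity in $s$, and hypothesis (1) only at those grid points), and thereby reduces both error terms to (i) differences $|(\nu-\nu')(F_j)|$ over a finite, $\nu'$-independent family of Borel sets $F_j$ --- small by definition of the $\tau$-neighborhood --- plus (ii) evaluations of the \emph{fixed limit measure} $\nu$ on sets of total Lebesgue measure $O(\e)$, which are small by part (1) of Lemma~\ref{AbsoluteContinuity} applied to $\nu$ alone. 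If you want to keep your scheme, you must replace the Vitali--Hahn--Saks step by such a finite discretization, so that absolute continuity is only ever invoked for the single measure $\nu$.
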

\begin{proof}
    To prove the claim, we show that
$$\Big| \int_{A}\nu(\d s, \d t,\d x,[a + a_s(\nu), b + a_s(\nu)]) - \int_A\nu'(\d s, \d t,\d x, [a + a_s(\nu'), b + a_s(\nu')]) \Big|$$
becomes arbitrarily small for $\nu'$ sufficiently close to $\nu$. To simplify notation, we omit the integration symbols $\d t$ and $\d x$ in the rest of the proof. Introducing a mixed expression, it suffices to bound the following two 
contributions 
	\begin{equation}\label{Est2}
	\begin{split}
&\Big|\int_{A}(\nu-\nu')(\d s,[a + a_s(\nu), b + a_s(\nu)])\Big|\cr
&+\int_{A}\nu'(\d s,[a + a^-_s(\nu,\nu'), a + a^+_s(\nu,\nu')]\cup[b + a^-_s(\nu,\nu'), b + a^+_s(\nu,\nu')])
	\end{split}
\end{equation}
where $a^-(\nu,\nu')=a_s(\nu)\wedge a_s(\nu')$ and $a^+(\nu,\nu')=a_s(\nu)\vee a_s(\nu')$.
Let $\DDD$ be a partition of $\itf$ into intervals $\D_{\de'}(i)$ with mesh size $\de'>0$, which is compatible with the piecewise structure and write $I_i = \D_{\de'}(i)\times \itf\times W$. 

Then, the first summand in \eqref{Est2} can be bounded by 
	\begin{equation}\label{Est1}
	\begin{split}
&\sum_{i \in \DDD}\big|(\nu-\nu')(A\cap I_i,[a + a_{i\de'}(\nu), b + a_{i\de'}(\nu)])\big|\cr
&+\sum_{i \in \DDD}\int_{A\cap I_i}\nu(\d s,[a + a_s(\nu), a + a_{i\de'}(\nu)]\cup[b + a_s(\nu), b + a_{i\de'}(\nu)])\cr
&+\sum_{i \in \DDD}\int_{A\cap I_i}\nu'(\d s,[a + a_s(\nu), a + a_{i\de'}(\nu)]\cup[b + a_s(\nu), b + a_{i\de'}(\nu)]).
	\end{split}
\end{equation}
Moreover, by continuity of $a_s(\nu)$ w.r.t.~$s$, for sufficiently small $\de'$, we have $\sup_{i \in \DDD}|a_s(\nu)-a_{i\de'}(\nu)|<\e$ . Thus, the last two lines in~\eqref{Est1} can be bounded from above by 
	\begin{align*}
&2\sum_{i \in \DDD}\nu(A\cap I_i,[a + a_{i\de'}(\nu)-\e, a + a_{i\de'}(\nu)]\cup[b + a_{i\de'}(\nu)-\e, b + a_{i\de'}(\nu)])\cr
        &+\sum_{i \in \DDD}\big|(\nu'-\nu)\big(A\cap I_i \times \big ([a + a_{i\de'}(\nu)-\e, a + a_{i\de'}(\nu)]\cup[b + a_{i\de'}(\nu)-\e, b + a_{i\de'}(\nu)]\big)\big)\big|.
	\end{align*}
Since 
$$\sum_{i\in\DDD}\Big|A\cap I_i,[a + a_{i\de'}(\nu)-\e, a + a_{i\de'}(\nu)]\cup[b + a_{i\de'}(\nu)-\e, b + a_{i\de'}(\nu)])\Big| \le 2\e,$$ 
by part (1) of Lemma~\ref{AbsoluteContinuity}, the first term vanishes as $\de'$ tends to zero. Also the second term becomes arbitrarily close to zero for $\nu'$ sufficiently close to $\nu$. This also applies to the first line in \eqref{Est1}. 

In order to estimate the second contribution in \eqref{Est2}, we use similar arguments. Fix the same mesh size $\de'$ as above, and let $\nu'$ be sufficiently close to $\nu$, such that also $\sup_{i \ge 0}|a_{i\de'}(\nu)-a_{i\de'}(\nu')|<\e$. Then by piecewise monotonicity we can bound from above by,
\begin{align*}
&\sum_{i\in\DDD}\nu'(A\cap I_i,[a + a^-_{i\de'}(\nu,\nu'), a + a^+_{(i+1)\de'}(\nu,\nu')]\cup[b + a^-_{i\de'}(\nu,\nu'), b + a^+_{(i+1)\de'}(\nu,\nu')])\cr
&\le \sum_{i\in \DDD}\nu'(A\cap I_i,[a + a_{i\de'}(\nu)-\e, a+ a_{i\de'}(\nu)+2\e]\cup[b + a_{i\de'}(\nu)-\e, b+ a_{i\de'}(\nu)+2\e]).
\end{align*}
Again up to an arbitrarily small error, this is equal to 
\begin{align*}
\sum_{i\in \DDD}\nu(A\cap I_i,[a + a_{i\de'}(\nu)-\e, a+ a_{i\de'}(\nu)+2\e]\cup[b + a_{i\de'}(\nu)-\e, b+ a_{i\de'}(\nu)+2\e])
\end{align*}
for $\nu'$ sufficiently close to $\nu$ and 
$$\sum_{i\in\DDD}\Big|A\cap I_i,[a + a_{i\de'}(\nu)-\e, a+ a_{i\de'}(\nu)+2\e]\cup[b + a_{i\de'}(\nu)-\e, b+ a_{i\de'}(\nu)+2\e]\Big| \le 6|W|\TF^2\e.$$
Hence part (1) of Lemma~\ref{AbsoluteContinuity} concludes the proof.
\end{proof}

The following result allows us to deduce continuity of solutions of the approximating system of differential equations.
\begin{lemma}
\label{contAuxLem1}
Let $\de > 0$ be arbitrary and $\Phi:\, \Mac(\muT) \to \Mac(\muT)$ and $a:\, \Mac(\muT) \to [0,1]$ be continuous. Then, the solution $b(\nu)$ of the differential equation 
	\begin{align}\label{contAuxLem1_EQ}
b_t=\int_{(k-1)\de}^t\Phi(\nu)(\d s, A, W, [0, a(\nu) - b_s])
	\end{align}
	is continuous on $\Mac(\muT)$ for all $A\in\mc{B}([0, \TF])$.
\end{lemma}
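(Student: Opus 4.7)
The approach is a compactness-plus-uniqueness argument. Suppose $\nu_n\to\nu$ in the $\tau$-topology on $\Mac(\muT)$ and set $b^n:=b(\nu_n)$; the goal is to show $b^n_t\to b_t(\nu)$ for every $t\in((k-1)\de,k\de]$.

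The first ingredient is a uniform absolute continuity estimate. Since $\Phi$ is $\tau$-continuous by hypothesis, $\Phi(\nu_n)\to\Phi(\nu)$ in $\tau$, and each $\Phi(\nu_n)\in\Mac(\muT)$. The Vitali--Hahn--Saks theorem then implies that the family $\{\Phi(\nu_n)\}_n$ is uniformly absolutely continuous with respect to $\muT$, hence also with respect to Lebesgue measure. This lets me control integrals against $\Phi(\nu_n)$ over sets of small Lebesgue measure, uniformly in $n$, which is the main analytic tool throughout.

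Each $b^n$ is continuous, nondecreasing, and takes values in $[0,a(\nu_n)]\subseteq[0,1]$, so by Helly's selection theorem every subsequence of $(b^n)$ admits a further subsequence, still denoted $(b^n)$, converging pointwise to a nondecreasing limit $b^*$. I would then argue that $b^*$ must be continuous: a jump of height $\eta>0$ at some $s_0$ would, via the integral representation of $b^n$, force $\Phi(\nu_n)((s_0-\e,s_0+\e]\times A\times W\times[0,1])\ge\eta-o(1)$ for every small $\e>0$; $\tau$-convergence on this fixed set would then yield mass $\ge\eta$ for $\Phi(\nu)$ on $(s_0-\e,s_0+\e]\times A\times W\times[0,1]$, contradicting the absolute continuity of $\Phi(\nu)$ in the limit $\e\downarrow 0$. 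Continuity of $b^*$ together with monotonicity of all functions involved then upgrades pointwise convergence to uniform convergence on $[(k-1)\de,k\de]$.

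The core step is to pass to the limit in the integral equation by writing
\begin{equation*}
b^n_t-\int_{(k-1)\de}^t\Phi(\nu)(\d s,A,W,[0,a(\nu)-b^*_s])=T^n_1+T^n_2,
\end{equation*}
where $T^n_1=(\Phi(\nu_n)-\Phi(\nu))(B^*)$ with the \emph{fixed} measurable set $B^*=\{(s,t',x,u)\in((k-1)\de,t]\times A\times W\times[0,1]:\,u\le a(\nu)-b^*_s\}$, and $T^n_2$ collects the contribution from the symmetric difference of the upper-bound sets with parameters $a(\nu_n)-b^n_s$ and $a(\nu)-b^*_s$. The term $T^n_1$ vanishes by $\tau$-convergence of $\Phi(\nu_n)$ on the fixed set $B^*$. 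For $T^n_2$, the Lebesgue length of the symmetric $u$-interval is at most $|a(\nu_n)-a(\nu)|+|b^n_s-b^*_s|$ pointwise in $s$, which tends to zero uniformly in $s$ by continuity of $a$ and the uniform convergence of $b^n$; combined with the uniform absolute continuity from Vitali--Hahn--Saks, this forces $T^n_2\to0$. Hence $b^*$ solves the integral equation driven by $\nu$ and $a(\nu)$, and by uniqueness (via the argument invoking \cite[Proposition 2.2]{wireless3}, as already used in the proof of Proposition~\ref{exUnAppProp}) one concludes $b^*=b(\nu)$. Since every subsequence has a further subsequence converging to $b(\nu)$, the full sequence converges, establishing continuity. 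The main obstacle is the control of $T^n_2$, which relies crucially on both the Vitali--Hahn--Saks uniform absolute continuity and the uniform convergence $b^n\to b^*$ obtained via the Helly argument.
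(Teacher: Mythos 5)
Your argument takes a genuinely different route from the paper's (compactness plus uniqueness, versus a direct perturbation estimate), and as a proof of \emph{sequential} continuity it is essentially sound: the Vitali--Hahn--Saks control of $T^n_2$, the Helly extraction, the no-jump argument for $b^*$, and the identification of the limit via uniqueness all fit together correctly. The gap is that sequential continuity is not what the lemma asserts. The $\tau$-topology on $\Mac(\muT)$ --- the coarsest topology making every evaluation $\nu\mapsto\nu(A)$ continuous --- is not first countable, so continuity must be verified on nets (equivalently, via neighborhoods), and a sequentially continuous map on such a space need not be continuous. Both of your key tools are intrinsically sequential: Vitali--Hahn--Saks fails for nets (a net of absolutely continuous measures converging setwise to an absolutely continuous limit need not be uniformly absolutely continuous --- one can agree with $\muT$ on any prescribed finite family of sets while concentrating mass on arbitrarily small subsets), and the subsequence extraction has no useful net analogue that would still give you the uniform convergence $b^n\to b^*$ needed for $T^n_2$. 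Since the downstream use of this lemma is Proposition~\ref{contProp} and the contraction/exponential-approximation machinery, which require genuine topological continuity, the argument as designed does not prove the stated claim; at best it would work on subsets of $\Mac(\muT)$ where the $\tau$-topology is metrizable (such as entropy sub-level sets), which is not the statement.

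For contrast, the paper's proof is a quantitative neighborhood estimate that sidesteps compactness entirely. It introduces the intermediate solution $b_t(\nu,\nu')$ of the equation driven by $\Phi(\nu')$ but with the parameter $a(\nu)$, bounds $|b_t(\nu)-b_t(\nu,\nu')|$ by the stability result \cite[Proposition 2.5]{wireless3} (perturbing only the driving measure), and then shows $|b_t(\nu')-b_t(\nu,\nu')|\le|a(\nu')-a(\nu)|$ by combining the two monotonicity statements of Lemma~\ref{solCurvesLem} (perturbing only the parameter $a$). Both contributions are small on a $\tau$-neighborhood of $\nu$, which is exactly topological continuity. If you want to salvage your approach, the cleanest fix is to replace the compactness step by such a direct two-parameter comparison; your decomposition into $T^n_1$ and $T^n_2$ already contains the right idea, but the bound on the $u$-sections must be obtained from a monotonicity or Gronwall-type comparison of the two solutions rather than from an a posteriori uniform convergence that only exists along sequences.
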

\begin{proof}
	Let $\nu'\in\Mac(\muT)$. Then, we introduce an intermediate solution $b_t(\nu, \nu')$ of
	$$b_t=\int_{(k-1)\de}^t\Phi(\nu')(\d s, A, W, [0, a(\nu) - b_s]).$$
	First, by~\cite[Proposition 2.5]{wireless3}, 		$|b_t(\nu,\nu') - b_t(\nu)|$
	becomes arbitrarily small if $\nu'$ is sufficiently close to $\nu$, so that it remains to consider the deviation
$|b_t(\nu,\nu') - b_t(\nu')|$.
We claim that 
$$|b_t(\nu') - b_t(\nu,\nu')| \le |a(\nu') - a(\nu)|.$$ 
To prove this claim assume that $a(\nu) \le a(\nu')$, noting similar arguments are valid if the inequality is reversed. Then, part (2) of Lemma~\ref{solCurvesLem} shows that 
$$b_t(\nu') - b_t(\nu,\nu') \ge 0.$$
	 Applying part (1) of Lemma~\ref{solCurvesLem} to the trajectories $a(\nu') - b_t(\nu')$ and $a(\nu) - b_t(\nu,\nu')$	gives that 
	$$b_t(\nu') - b_t(\nu,\nu') \le a(\nu') - a(\nu),$$
	as required.
\end{proof}

Relying on Lemmas~\ref{contAuxLem0} and~\ref{contAuxLem1}, we now prove Proposition~\ref{contProp}.

\begin{proof}[Proof of Proposition~\ref{contProp}]
We start by establishing continuity of the scalar quantity  $\addi{\de}_t(\nu)$. Let $k\ge1$ be such that $t \in \D_\de(k-1)$ and assume that we have already established continuity of $\addi{\de}_{t'}(\nu)$ and $\{\addo{\de}{,j}_{t'}(\nu)\}_{j\ge0}$ for all $t' \le (k-1)\de$. 

Let $(k-1)\de<t < k\de$. To prove continuity of $\addi{\de}_t(\nu)$, note that Lemma~\ref{contAuxLem1}, with $a=\addi{\de}_{(k-1)\de}(\nu)$ and $\Phi$ the identity map, yield continuity of the solution $b_t(\nu)$ of the equation~\eqref{contAuxLem1_EQ}. But $\addi{\de}_{t}(\nu)=\addi{\de}_{(k-1)\de}(\nu)-b_t(\nu)$ and thus is also continuous. 

This also proves continuity of $\{\addo{\de}{,j}_{t}(\nu)\}_{j\ge k}$. Indeed, applying induction and Lemma~\ref{contAuxLem0} with $a=-1$, $b=0$ and $a_s(\nu) = \addi{\de}_s(\nu)$ shows that $\addo{\de}{,j}_{t}(\nu)$ is continuous in $\nu$. In order to prove continuity of $\addo{\de}{,k-1}_{t}(\nu)$, consider the integral equation 
\begin{align*}
b_t=\int_{(k-1)\de}^t\nu(\d s, A, W,[a_s(\nu),a_s(\nu) + a'(\nu) - b_s])=\int_{(k-1)\de}^t\Phi(\nu)(\d s, A, W,[0,a'(\nu) - b_s])
\end{align*}
where $a'(\nu)=\addo{\de}{,k-1}_{(k-1)\de}(\nu)$ is continuous by induction assumption and $\Phi$ is defined as in Lemma~\ref{contAuxLem0} with $a(\nu)=\addi{\de}(\nu)$ satisfying its assumptions. Thus, by  Lemma~\ref{contAuxLem1}, the solution $b_{t}(\nu)$ is continuous. But then $\addo{\de}{,k-1}_{t}(\nu)=\addo{\de}{,k-1}_{(k-1)\de}(\nu)-b_{t}(\nu)$ is also continuous.

For $t= k\de$, first note that $\addo{\de}{,k-1}_{k\de}(\nu)=0$ is continuous and also the mappings $\{\addo{\de}{,j}_{k\de}(\nu)\}_{j\ge k}=\{\addo{\de}{,j}_{k\de-}(\nu)\}_{j\ge k}$ are continuous. Further since $\addi{\de}_{k\de}(\nu)=\addi{\de}_{k\de-}(\nu)+\addo{\de}{,k-1}_{k\de-}(\nu)$ is a sum of continuous mappings, it is also continuous which completes the induction step.

\medskip
Finally, for the continuity for the measure valued process $\g^\de(\nu,r)$ note that 
\begin{align*}
\nu(\d s, \d t, \d x, [\addi{\de}_{s-}(\nu),1])=\sum_{k=0}^{\TF/\de}\nu(\d s, \d t, \d x, [\addi{\de}_{s-}(\nu),1])\one\{(k-1)\de\le s<k\de\}.
\end{align*}
Every summand is continuous by an application of Lemma~\ref{contAuxLem0} with $a=0$, $b=1$ and $a(\nu)=\addi{\de}(\nu)$ which finishes the proof.
\end{proof}

\subsection{Proof of Propositions~\ref{odeRepProp1} and~\ref{unifApproxProp}}
\label{app1Sec}
In this subsection, we prove that the solutions of the approximating system~\eqref{SysODE} give rise to good approximations to the true process of frustrated transmitters -- even when measured in a strong topology such as total variation distance. More precisely, we show the exponentially good approximation property for empirical measures (Proposition~\ref{odeRepProp1}) and uniform approximation on sets of bounded entropy (Proposition~\ref{unifApproxProp}).

\begin{proof}[Proof of Propositions~\ref{odeRepProp1} and~\ref{unifApproxProp}]
First, let $\nu \in  \Mac(\muT) \cup \Memp(V)$ be arbitrary. Now, monotonicity in $\delta$ gives that 
\begin{align*}
    &\Vert\g(\nu, r) - \g^{\de}(\nu, r)\Vert \\
    &\quad= \lim_{\de' \downarrow 0}\Vert\g^{\de'}(\nu, r) - \g^{\de}(\nu, r)\Vert \cr
    &\quad= \lim_{\de' \downarrow 0}\sup_{A \in \mc{B}(\itf^2\times W)} \int_{A \times [0,1]}\one\{\addi{\de}_{s-}(r^{-1}\nu) \le u \le \addi{\de'}_{s-}(r^{-1}\nu)\}  \nu(\d (s, t, x, u))\\
	&\quad\le \nu(A_*^\de(\nu)),
\end{align*}
 where
    $$A_*^\de(\nu) = \{(s, t, x, u) \in V:\, u\in  [\ai_{s-}(r^{-1}\nu), \addi{\de}_{s-}(r^{-1}\nu)]\}.$$
Note that by part (2) of Lemma~\ref{discMonLem},
    $$ \sup_{s \le \TF}(\ai_{s-}(r^{-1}\nu) - \addi{\de}_{s-}(r^{-1}\nu))\le \addc{\de}{}_{\TF}(r^{-1}\nu) + 2\sup_{l}\nu(\D_\de(l))$$
so that the result follows from Lemma~\ref{stoDomLem} part (2) and part (3).
\end{proof}

\subsection{Proof of Proposition~\ref{odeRepProp2}}
\label{app2Sec}

Next, we need to show that we may replace $\rla^{-1}$ by $r$. More precisely, we claim that 
$$\g^\de(\Lla,r) - \g^\de(\Lla,\rla)$$
is an exponentially good approximation of zero in total variation distance. To achieve this goal, we introduce a refinement of the approximation defined by~\eqref{SysODE}. This refinement takes into account not only uncertainties in the time dimension, but also uncertainties with respect to the relay number. Loosely speaking, the approximations are built on the idea that for $r>\rla$, idle relays are reduced with rate $r^{-1}\ai{}$, whereas occupied relays are generated only at rate $\rla^{-1}\ai$. More precisely, we introduce the following system of differential equations.

\begin{definition}
Let $\rho>1$ and $\nu$ be an empirical measure. Then, 
        \begin{equation}\label{SysODERen}
                \begin{split}
                        \ai_t&=\ai_{(k-1)\de}-\int_{[(k-1)\de, t]}\nu(\d s,[0,\TF],W, [0, \ai_{s-}])\cr
                        \ai_{k\de}&=\ai_{k\de-} + \ao{,k-1}_{k\de-}\cr
                        \ao{,k-1}_{t}&=\ao{,k-1}_{(k-1)\de}-\int_{[(k-1)\de, t]}\nu(\d s, [0, \TF],W,[\ai_{s-},\ai_{s-}+\ao{,k-1}_{s-}])\cr
                                                \ao{,k-1}_{k\de}&=0\cr
                                                                        \ao{,j}_{t}&=\int_{[0,t]}\nu(\d s,\D_\de(j),W,[0, \rho^{-1}\ai_{s-}])\cr
                        \ac_{t}&=\ac_{(k-1)\de}+\int_{[(k-1)\de, t]}\nu(\d s,\D_\de(k-1), W,[0, \ai_{s-} + \ao{,k-1}_{s-}])\cr
&\hspace{1.5cm}+\int_{0}^{t}\nu(\d s,[k\de, \TF],W,[\ai_{s-},\ai_{s-} + \ao{,k-1}_{s-}])\cr
                        \acn_{t}&=\int_{[0,t]}\nu(\d s, \itf, W,[\rho^{-1}\ai_{s-}, \ai_{s-}])
                \end{split}
        \end{equation}
        where $j\ge k$ and the initial condition is given by $\ai_0=1$ and all other quantities equal to zero.
\end{definition}
If $\nu$ is an empirical measure, then the system~\eqref{SysODERen} has a unique solution that we denote by 
$$\{\addi{\de}(\rho, \nu),\{ \addo{\de}{,j}(\rho, \nu)\}_{j\ge 0}, \addc{\de}{}(\rho, \nu), \addcn{\de}{}(\rho, \nu)\}.$$
As before, in the marked setting we then define
\begin{align}
\label{busyMarkEq}
\g^\de(\r, \nu)( \d s,\d t, \d x, \d u)  = \nu(\d s, \d t, \d x, [\addi{\de}_{s-}(\r,\nu),1]).
\end{align}

Our intuition is that $\addi{\de}(\rho, \nu)$ should capture relays that can be guaranteed to be idle in the face of uncertainties stemming from both time and normalization fluctuations. In particular, $\addi{\de}(\rho, \nu)$ should be smaller than both $\addi{\de}(\nu)$ and $\rho\addi{\de}(\rho^{-1}\nu)$ since in the latter approximations only time fluctuations are taken into account. The next result provides a rigorous argument showing that this intuition is correct.

For the proof of Proposition~\ref{odeRepProp2}, we extend the strategy implemented for the derivation of Proposition~\ref{odeRepProp1}. More precisely, in Lemma~\ref{appDomCoupRenLem} we first make use of the concept of critical relays to provide a rigorous upper bound for the error exhibited in Definition~\ref{SysODERen}. After that, we rely on Lemma~\ref{stoDomLem} to show that the critical relays are an exponentially good approximation of zero.

\begin{lemma}
        \label{appDomCoupRenLem}
Let $\rho>1$ and $\nu$ be an empirical measure. Then,
        \begin{enumerate}
        \item for every $t \le \TF$, we have $\addi{\de}_t(\rho, \nu) \le \addi{\de}_t(\nu) \wedge \rho\addi{\de}_t(\rho^{-1}\nu)$, 
        \item $\addi{\de}_t(\nu) - \addi{\de}_t(\rho, \nu) \le \addc{\de}{}_{t}(\rho, \nu) + \addcn{\de}{}_{t}(\rho, \nu)$, and 
	\item $\rho\addi{\de}_t(\rho^{-1}\nu) - \addi{\de}_t(\rho, \nu) \le \addc{\de}{}_{t}(\rho, \nu) + \addcn{\de}{}_{t}(\rho, \nu) + \rho - 1$.
        \end{enumerate}
\end{lemma}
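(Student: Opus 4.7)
The plan is to prove all three inequalities simultaneously by induction on the discretization window $\D_\de(k-1)$, running in parallel the refined idle trajectory $\addi{\de}(\rho,\nu)$ from~\eqref{SysODERen}, the original $\addi{\de}(\nu)$ from~\eqref{SysODE}, and the rescaled quantity $\bar a_t := \rho\,\addi{\de}_t(\rho^{-1}\nu)$, which (after multiplying the defining equation for $\rho^{-1}\nu$ through by $\rho$) satisfies $\bar a_t = \bar a_{(k-1)\de} - \int_{((k-1)\de,t]}\nu(\d s,((k-1)\de,\TF],W,[0,\rho^{-1}\bar a_{s-}])$ inside each window. The base case $t=0$ is immediate: $\addi{\de}_0(\rho,\nu)=\addi{\de}_0(\nu)=1$, $\bar a_0=\rho$, and all critical terms vanish, so parts (1)--(2) hold trivially and part (3) reduces to $\rho-1\le \rho-1$.

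For the inductive step on part (1), on the open interval $((k-1)\de,k\de)$ each trajectory solves an integral equation of the form $b_t=b_{(k-1)\de}-\int\nu(\d s, E, W, [0, f(b_{s-})])$. Comparing $\addi{\de}(\rho,\nu)$ with $\addi{\de}(\nu)$, both have $f=\mathrm{id}$, but the refined system integrates the second coordinate over $E=[0,\TF]$ while~\eqref{SysODE} uses $E=((k-1)\de,\TF]$, so the refined trajectory decreases at least as fast, and Lemma~\ref{solCurvesLem}~(1) combined with the inductive hypothesis at $(k-1)\de$ yields the desired inequality. The comparison $\addi{\de}(\rho,\nu)\le \bar a$ then follows from the same lemma after inserting an intermediate trajectory with the refined's initial value but obeying the $\bar a$-equation, and noting that $[0,A]\supseteq[0,\rho^{-1}A]$ (since $\rho>1$) drives the refined down even faster. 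To extend the inequalities across the jump at $k\de$, where $\ai_{k\de}=\ai_{k\de-}+\ao{,k-1}_{k\de-}$, the same induction scheme has to be run in parallel on $\addo{\de}{,k-1}$; the required comparisons follow from Lemma~\ref{solCurvesLem}~(2) applied to the decreasing equation together with the observation that in previous windows the refined build-up of $\ao{,k-1}$ was already dampened by the factor $\rho^{-1}$ in~\eqref{SysODERen}.

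Parts (2) and (3) are then an accounting of the cumulative discrepancy. The plan is to show that the excess reduction of $\addi{\de}(\rho,\nu)$ compared to $\addi{\de}(\nu)$ (resp.\ $\bar a$) is deposited into $\addc{\de}{}(\rho,\nu)+\addcn{\de}{}(\rho,\nu)$: the level range $[\rho^{-1}\ai,\ai]$ that~\eqref{SysODERen} consumes but fails to reinject as occupied mass is exactly what defines $\addcn{\de}{}$, while the mismatches at the window boundaries (smaller $\addo{\de}{,k-1}$ jumping back into $\ai$) can be bounded by increments of $\addc{\de}{}$. For part~(3) the same argument starts from the initial surplus $\bar a_0-\addi{\de}_0(\rho,\nu)=\rho-1$, which is never consumed and hence carries through to the final bound. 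The hard part will be this bookkeeping: each equation in~\eqref{SysODERen} involves several distinct level ranges and exit-time restrictions, and ensuring that every unit of \emph{missing} idle mass is matched by an increment of $\ac$ or $\acn$ calls for a careful telescoping, most naturally phrased as an identity obtained by summing the defining equations before invoking the window-by-window induction.
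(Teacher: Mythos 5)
Your overall strategy --- window-by-window induction via Lemma~\ref{solCurvesLem} for part (1), then a mass-balance accounting for parts (2) and (3) --- coincides with the paper's, and your observations about the enlarged exit-time range $[0,\TF]$ versus $((k-1)\de,\TF]$ and the nesting $[0,A]\supseteq[0,\rho^{-1}A]$ are exactly the right inputs inside each window. However, two steps as you describe them do not go through. At the boundary $t=k\de$ one needs $\ai_{k\de-}(\rho,\nu)+\ao{,k-1}_{k\de-}(\rho,\nu)\le \ai_{k\de-}(\nu)+\ao{,k-1}_{k\de-}(\nu)$ (and the analogue against the rescaled system). Your plan to get this by running the induction ``in parallel on $\addo{\de}{,k-1}$'' and invoking Lemma~\ref{solCurvesLem} does not work for $\ao{,k-1}$ on its own: its release integral runs over the level interval $(\ai_{s-},\ai_{s-}+\ao{,k-1}_{s-}]$, and since the refined system has the smaller $\ai$, the two intervals are not nested, so neither part of Lemma~\ref{solCurvesLem} applies and a pointwise comparison of $\ao{,k-1}$ inside the window is not available. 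The paper's resolution is to note that the \emph{sum} $\ai+\ao{,k-1}$ solves an autonomous decreasing equation of the type covered by Lemma~\ref{solCurvesLem} part (1) (the union of the two level ranges is $[0,\ai_{s-}+\ao{,k-1}_{s-}]$) and to compare the sums directly; this is what propagates part (1) across the jump.

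Second, the ``careful telescoping'' you defer for parts (2) and (3) is not residual bookkeeping but hinges on an auxiliary claim your induction must carry along: the domination $\ao{,j}_{k\de}(\rho,\nu)\le\ao{,j}_{k\de}(\nu)\wedge\rho\,\ao{,j}_{k\de}(\rho^{-1}\nu)$ for \emph{all} $j$, which follows from part (1) through the defining integral formulas. With it, conservation of total mass in each system (idle plus occupied plus critical is constant, equal to $1$, resp.\ to $\rho$ after rescaling) turns (2) and (3) into one-line identities: the idle deficit equals the difference of occupied-plus-critical masses, the occupied difference has the favorable sign by the auxiliary claim, the nonnegative term $\ac_t(\nu)$ can be dropped, and the constant $\rho-1$ in (3) is exactly the offset from the rescaled mass balance. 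Without the occupied-mass domination, the summed identity alone does not determine the sign of the occupied contribution and the asserted bounds do not follow; so the missing ingredient is precisely this strengthened induction hypothesis, not a finer decomposition of level ranges.
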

\begin{proof}
We suppress the $\de$-dependence in the proof and show that
 \begin{enumerate}
                \item[(1)] for every $t \le \TF$, we have $\ai_t(\rho, \nu) \le \ai_t(\nu) \wedge \rho\ai_t(\rho^{-1}\nu)$ and
                \item[(1a)] for every $j,k \ge 0$, we have $\ao{,j}_{k\de}(\rho, \nu) \le \ao{,j}_{k\de}(\nu) \wedge \rho\ao{,j}_{k\de}(\rho^{-1}\nu)$
        \end{enumerate}
using induction on $k$, where $t \in ((k-1)\de, k\de]$ be arbitrary.

First, assume that $t \ne k\de$. Then, the inequality $\ai_t(\rho, \nu) \le \ai_t(\nu)$ follows from Lemma~\ref{solCurvesLem} applied with $a = \ai_{(k-1)\de}(\rho, \nu)$ and $a' = \ai_{(k-1)\de}(\nu)$. Similarly, the inequality $\rho^{-1}\ai_t(\rho, \nu) \le \ai_t(\rho^{-1}\nu)$ follows from Lemma~\ref{solCurvesLem} applied with $a = \rho^{-1}\ai_{(k-1)\de}(\rho, \nu)$ and $a' = \ai_{(k-1)\de}(\rho^{-1} \nu)$. From Lemma~\ref{solCurvesLem}, we also conclude that 
$$\ai_t(\rho, \nu) + \ao{,k-1}_t(\rho, \nu) \le (\ai_t(\nu) + \ao{,k-1}_t(\nu)) \wedge (\ai_t(\rho^{-1}\nu) + \ao{,k-1}_t(\rho^{-1}\nu)).$$ 
Hence, part (1) also holds at $t = k\de$. Part (1a) follows from part (1) by the defining integral formula for $\ao{,j}_{k\de}(\rho, \nu)$.

Part (2) follows from part (1a), since 
        \begin{align*}
            \ai_t(\nu) - \ai_t(\rho, \nu)  =  \Big(\sum_{j \ge 0}\ao{,j}_{t}(\rho, \nu) + \ac_{t}(\rho, \nu) + \acn_{t}(\rho, \nu)\Big) - \Big(\sum_{j\ge0}\ao{,j}_{t}(\nu) + \ac_{t}(\nu)\Big).
        \end{align*}
Similarly, we can represent the difference $\rho\ai_t(\rho^{-1}\nu) - \ai_t(\rho, \nu)$ as 
        \begin{align*}
            \Big(\sum_{j\ge0}\ao{,j}_{t}(\rho, \nu) + \ac_{t}(\rho, \nu) + \acn_{t}(\rho, \nu)\Big) - \rho\Big(\sum_{j\ge0}\ao{,j}_{t}(\rho^{-1}\nu) + \ac_{t}(\rho^{-1}\nu)\Big) + \rho - 1,
        \end{align*}
so that an application of part (1a) concludes the proof of part (3).
        \end{proof}

Next, we note that as in Lemma~\ref{scaleCritLem} the number of users $\addc{\de}{}(\rho, \nu)$ that are critical due to time discretization vanish in the limit $\de \downarrow 0$.
        \begin{lemma}
                \label{critVanCoupRenLem}
Put $r^-_\la = r \wedge \rla$, $r^+_\la = r \vee \rla$ and $\rho_\la = r^+_\la/r^-_\la$. Then, $\addc{\de}{}_{\TF}(\rho_\la,(r^-_\la)^{-1}\Lla)$ is an exponentially good approximation of zero.
        \end{lemma}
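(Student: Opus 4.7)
The plan is to mirror the argument used in the proof of Lemma~\ref{scaleCritLem} part~(3). Writing $\tilde{\Lla} = (r^-_\la)^{-1}\Lla$, the defining formula for $\ac{}$ in~\eqref{SysODERen} yields the decomposition $\addc{\de}{}_{\TF}(\rho_\la, \tilde\Lla) = \tilde\Lla(A^\de_1) + \tilde\Lla(A^\de_2)$, where the time slices $A^\de_{i,s}$ are
\begin{align*}
A^\de_{1,s} &\subset \D_\de(\lfloor s/\de\rfloor) \times W \times [0,1], \\
A^\de_{2,s} &\subset [(\lfloor s/\de\rfloor + 1)\de,\TF]\times W \times [\addi{\de}_{s-}, \addi{\de}_{s-} + \addo{\de}{,\lfloor s/\de\rfloor}_{s-}],
\end{align*}
with $\addi{\de}$ and $\addo{\de}{,\cdot}$ evaluated at $(\rho_\la, \tilde\Lla)$. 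Since the slice processes are previsible, Lemma~\ref{stoDomLem} part~(3) reduces the claim to checking that $\sup_{s \le \TF}|A^\de_{i,s}|$ is an exponentially good approximation of zero for $i = 1, 2$.

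The $A^\de_1$ contribution is handled trivially, since $|A^\de_{1,s}| \le \de |W|$ deterministically. The only genuine work lies in controlling $|A^\de_{2,s}|$, and the main obstacle I anticipate is verifying that $\addo{\de}{,k-1}_{s-}(\rho_\la, \tilde\Lla)$ admits the same single-window Poisson-mass bound as in the unrefined system~\eqref{SysODE}, despite the modified generation rate $\rho^{-1}\addi{\de}$. I plan to read this off directly from line five of~\eqref{SysODERen}: on the active window $((k-1)\de, k\de]$, $\addo{\de}{,k-1}_{\cdot}$ is monotone decreasing, hence bounded by its value at $(k-1)\de$, which equals
$$\int_{[0,(k-1)\de]}\tilde\Lla(\d u, \D_\de(k-1), W, [0, \rho_\la^{-1}\addi{\de}_{u-}]) \le \tilde\Lla(\itf \times \D_\de(k-1) \times W \times [0,1]),$$
using $\rho_\la^{-1}\addi{\de} \le 1$. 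The inequality $\rho^{-1}\le 1$ is precisely the place where the fact that the refined system generates occupied relays no faster than the original one enters; it guarantees that the harmless perturbation by $\rho_\la^{-1}$ does not destroy the single-interval bound.

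With this bound in hand, $\sup_{s \le \TF}|A^\de_{2,s}| \le \TF|W|\max_{k \le \TF/\de}\tilde\Lla(\itf \times \D_\de(k-1)\times W\times[0,1])$. A union bound over the $O(\de^{-1})$ discretization windows, combined with the Poisson concentration estimate in Lemma~\ref{AbsoluteContinuity} part~(3) applied to each Poisson variable $\Lla(\itf \times \D_\de(k-1) \times W \times [0,1])$ of parameter of order $\la\de$, shows that the maximum is an exponentially good approximation of zero as first $\la \uparrow \infty$ and then $\de \downarrow 0$; this uses that $r^-_\la$ stays bounded away from zero for large $\la$ thanks to $r_\la \to \muR(W) > 0$ and $r > 0$. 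A final application of Lemma~\ref{stoDomLem} part~(3) concludes the argument.
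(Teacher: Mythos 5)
Your proposal is correct and follows essentially the same route as the paper, which simply states that the arguments of Lemma~\ref{scaleCritLem} apply verbatim: the same decomposition of $\ac{}$ into the two critical sets, the same single-window bound on $\addo{\de}{,k-1}$, and the same combination of Lemma~\ref{AbsoluteContinuity} part~(3) with Lemma~\ref{stoDomLem} part~(3). You in fact supply the one detail the paper leaves implicit, namely that the modified generation rate $\rho_\la^{-1}\addi{\de}\le 1$ in~\eqref{SysODERen} preserves the bound $\addo{\de}{,k-1}_{(k-1)\de}\le\tilde\Lla(\itf\times\D_\de(k-1)\times W\times[0,1])$.
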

        \begin{proof}
                Since the arguments from Lemma~\ref{scaleCritLem} apply verbatim, we omit the proof.
        \end{proof}
        Moreover, also second-type critical users $\addcn{\de}{}(\rho, \nu)$ become negligible as $\de \downarrow 0$.
        \begin{lemma}
                \label{critVanCoupRen2Lem}
It holds that  $\addcn{\de}{}_{\TF}(\rho_\la,(r^-_\la)^{-1}\Lla)$ is exponentially equivalent to zero.
        \end{lemma}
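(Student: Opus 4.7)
The plan is to apply part~(3) of Lemma~\ref{stoDomLem} in the $\la$-dependent form noted in the remark following its proof. From the defining integral for $\acn$ in~\eqref{SysODERen}, specialized to $\nu=(r^-_\la)^{-1}\Lla$ and $\rho=\rho_\la$, one reads off
$$\addcn{\de}{}_{\TF}(\rho_\la,(r^-_\la)^{-1}\Lla) \;=\; (r^-_\la)^{-1}\Lla(A^{\de,\la}_*),$$
where the time-$s$ slice of $A^{\de,\la}_*$ is
$$A^{\de,\la}_{*,s} \;=\; \itf\times W\times [\rho_\la^{-1}\,\addi{\de}_{s-}(\rho_\la,(r^-_\la)^{-1}\Lla),\,\addi{\de}_{s-}(\rho_\la,(r^-_\la)^{-1}\Lla)].$$
Since $\addi{\de}_{s-}$ depends only on Poisson arrivals with entrance time strictly before $s$, the process of sets $A^{\de,\la}_{*,s}$ is previsible, as required by Lemma~\ref{stoDomLem} part~(3).

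Next I would bound the Lebesgue volume of the slice uniformly in $s$. Using $\addi{\de}_{s-}\le 1$,
$$|A^{\de,\la}_{*,s}|\;\le\;\TF\,|W|\,(1-\rho_\la^{-1})\,\addi{\de}_{s-}(\rho_\la,(r^-_\la)^{-1}\Lla)\;\le\;\TF\,|W|\,(1-\rho_\la^{-1}).$$
Since $\rla\to r$ as $\la\uparrow\infty$, we have $r^-_\la\to r$ and $\rho_\la=r^+_\la/r^-_\la\to 1$, so this upper bound converges to zero deterministically. In particular, $\sup_{s\le\TF}|A^{\de,\la}_{*,s}|$ is trivially an exponentially good approximation of zero, since the quantity it bounds is deterministic and vanishes.

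With these two ingredients, Lemma~\ref{stoDomLem} part~(3) (in its $\la$-dependent version) immediately yields that $\Lla(A^{\de,\la}_*)$ is an exponentially good approximation of zero. Multiplying by the bounded prefactor $(r^-_\la)^{-1}$ preserves this property and gives the claim. I do not expect a substantive obstacle here: compared with Lemma~\ref{critVanCoupRenLem}, the only additional input is the deterministic convergence $\rho_\la\to 1$, which forces the width of the critical $u$-slice to shrink, after which the Poisson concentration machinery packaged into Lemma~\ref{stoDomLem} concludes the argument.
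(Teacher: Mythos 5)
Your proposal is correct and follows essentially the same route as the paper: bound the width of the critical $u$-slice $[\rho_\la^{-1}\addi{\de}_{s-},\addi{\de}_{s-}]$ by $|1-\rho_\la^{-1}|$ using $\addi{\de}\le 1$, note this vanishes deterministically as $\rho_\la\to 1$, and invoke part (3) of Lemma~\ref{stoDomLem} in its $\la$-dependent form. The paper's proof is just a terser version of the same argument.
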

\begin{proof}
Since $\addi{\de}(\rho_\la,(r^-_\la)^{-1}\Lla)$ is bounded above by 1,
$$\limsup_{\la \uparrow\infty}\sup_{t \in \itf}|1-\rho_{\la}|\addi{\de}_{t-}(\rho_\la,(r^-_\la)^{-1}\Lla) \le \limsup_{\la \uparrow\infty} |1-\rho_{\la}| = 0.$$
In particular, the asserted exponential equivalence is a consequence of part (3) of Lemma~\ref{stoDomLem}.
\end{proof}

\begin{corollary}
        \label{scaleBetRenLem}
        The expressions
$$\addi{\de}_{t}(r^{-1}\Lla) - \addi{\de}_{t}(\rhola,(\rla^-)^{-1}L_\la)\qquad\text{ and }\qquad\addi{\de}_{t}(\rla^{-1}\Lla) - \addi{\de}_{t}(\rhola,(\rla^-)^{-1}L_\la).$$
are both of exponentially good approximations of 0.

\end{corollary}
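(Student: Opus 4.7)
The plan is to deduce both exponentially good approximations by exploiting the sandwich already built into Lemma \ref{appDomCoupRenLem}. The key observation I would open with is that since $\rho_\la^{-1}(r_\la^-)^{-1} = (r_\la^+)^{-1}$, the unordered pair $\{r^{-1}\Lla,\, r_\la^{-1}\Lla\}$ is identically the pair $\{(r_\la^-)^{-1}\Lla,\, \rho_\la^{-1}(r_\la^-)^{-1}\Lla\}$. This means that comparing $\addi{\de}_t(r^{-1}\Lla)$ and $\addi{\de}_t(\rla^{-1}\Lla)$ to the common refined quantity $\addi{\de}_t(\rho_\la,(r_\la^-)^{-1}\Lla)$ is exactly the job Definition \eqref{SysODERen} and Lemma \ref{appDomCoupRenLem} were designed for: the refined approximation lies below both scalar ones, and the deficit is controlled by the critical masses $\addc{\de}{}$ and $\addcn{\de}{}$ plus a deterministic term $\rho_\la-1$.

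Next I would apply Lemma \ref{appDomCoupRenLem} with $\nu = (r_\la^-)^{-1}\Lla$ and $\rho = \rho_\la$. Part (2) combined with part (1) gives
\[
0 \;\le\; \addi{\de}_t((r_\la^-)^{-1}\Lla) - \addi{\de}_t(\rho_\la,(r_\la^-)^{-1}\Lla) \;\le\; \addc{\de}{}_{\TF}(\rho_\la,(r_\la^-)^{-1}\Lla) + \addcn{\de}{}_{\TF}(\rho_\la,(r_\la^-)^{-1}\Lla),
\]
and part (3) applied at $\rho_\la^{-1}(r_\la^-)^{-1}\Lla = (r_\la^+)^{-1}\Lla$, combined with the matching lower bound coming from part (1), yields
\[
|\addi{\de}_t((r_\la^+)^{-1}\Lla) - \addi{\de}_t(\rho_\la,(r_\la^-)^{-1}\Lla)| \;\le\; \addc{\de}{}_{\TF}(\rho_\la,(r_\la^-)^{-1}\Lla) + \addcn{\de}{}_{\TF}(\rho_\la,(r_\la^-)^{-1}\Lla) + (\rho_\la-1),
\]
after rewriting $\rho_\la\,\addi{\de}_t = \addi{\de}_t + (\rho_\la-1)\addi{\de}_t$ and using $\addi{\de}_t\in[0,1]$.

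The proof then concludes by combining three inputs: Lemma \ref{critVanCoupRenLem} shows that $\addc{\de}{}_{\TF}(\rho_\la,(r_\la^-)^{-1}\Lla)$ is an exponentially good approximation of zero, Lemma \ref{critVanCoupRen2Lem} shows the same for $\addcn{\de}{}_{\TF}(\rho_\la,(r_\la^-)^{-1}\Lla)$, and $\rho_\la-1\to 0$ deterministically because $r_\la\to r$ by the assumed convergence $l_\la\to\muR$. Since both of the differences in the statement are dominated, in absolute value, by the same sum, both are exponentially good approximations of zero. I do not anticipate any real technical obstacle here: the heavy lifting lies in Lemma \ref{appDomCoupRenLem} and the vanishing of critical masses, and the main conceptual step is simply identifying the two scalings $r^{-1}$ and $\rla^{-1}$ with $(\rla^-)^{-1}$ and $(\rla^+)^{-1}$ so that the refined coupling applies cleanly on both sides.
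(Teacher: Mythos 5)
Your proposal is correct and follows essentially the same route as the paper: the paper likewise invokes parts (2) and (3) of Lemma~\ref{appDomCoupRenLem} (splitting into the cases $\rla>r$ and $r>\rla$, which is just your identification of $\{r^{-1},\rla^{-1}\}$ with $\{(\rla^-)^{-1},(\rla^+)^{-1}\}$ phrased differently) and then concludes with Lemmas~\ref{critVanCoupRenLem} and~\ref{critVanCoupRen2Lem} together with $\rho_\la\to1$. If anything, your version is slightly more careful in making explicit the lower bounds from part (1) and the step $\rho_\la\addi{\de}_t=\addi{\de}_t+(\rho_\la-1)\addi{\de}_t$ needed to pass from the part-(3) bound to the stated difference.
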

\begin{proof}
We only prove the first assertion, as the second one is shown using similar arguments. 
If $\rla > r$, then part (2) of Lemma~\ref{appDomCoupRenLem} gives the upper bound for
$$|\addi{\de}_{t}(r^{-1}\Lla) - \addi{\de}_{t}(\rhola, r^{-1}L_\la)| \le \addc{\de}{}_{t}(\rho_\la,r^{-1}\Lla) + \addcn{\de}{}_{t}(\rho_\la,r^{-1}\Lla).$$
Hence, in that case Lemmas~\ref{critVanCoupRenLem} and~\ref{critVanCoupRen2Lem} conclude the proof. 
Similarly, if $r > \rla$, then part (3) of Lemma~\ref{appDomCoupRenLem} gives that 
$$|\rhola\addi{\de}_{t}(r^{-1}\Lla) - \addi{\de}_{t}(\rhola,(\rla^-)^{-1}\Lla)| \le \addc{\de}{}_{t}(\rho_\la,\rla^{-1}\Lla) + \addcn{\de}{}_{t}(\rho_\la,\rla^{-1}\Lla) + \rhola - 1,$$
so that another application of Lemmas~\ref{critVanCoupRenLem} and~\ref{critVanCoupRen2Lem} concludes the proof. 
\end{proof}

\begin{proof}[Proof of Proposition~\ref{odeRepProp2}]
As in the proof of Proposition~\ref{unifApproxProp}, we see that 
    \begin{align*}
        \Vert\g^\de(\Lla,r) - \g^\de(\Lla, \rla)\Vert &\le \Vert\g^\de(\Lla,r) - \g^\de(\rhola, (\rla^-)^{-1}\Lla)\Vert + \Vert \g^\de(\rhola, (\rla^-)^{-1}\Lla)  - \g^\de(\Lla, \rla)\Vert\\ 
        &\le \Lla(A^{(1), \de}_*(\Lla)) + \Lla(A^{(2), \la, \de}_*(\Lla)),
    \end{align*}
    where 
    $$A_*^{(1), \la, \de}(\Lla) = \{(s, t, x, u) \in V:\, u \in [\addi{\de}_{t}(\rhola, (\rla^-)^{-1}\Lla), \addi{\de}_{t}(r^{-1}\Lla) ]\}$$
    and 
    $$A_*^{(2), \la, \de}(\Lla) = \{(s, t, x, u) \in V:\, u\in [\addi{\de}_{t}(\rhola, (\rla^-)^{-1}\Lla), \addi{\de}_{t}(\rla^{-1}\Lla) ]\}.$$
Hence, applying Lemma~\ref{stoDomLem} part (3) together with Corollary~\ref{scaleBetRenLem} concludes the proof.
\end{proof}

\section{Outline of proof of Theorem~\ref{LDP_Spatial}}\label{Outline_Two}
Following the route of~\cite{wireless3}, we prove Theorem~\ref{LDP_Spatial} by reducing it to the setting of flat preference kernels considered in Theorem~\ref{LDP_NoSpatial}. Although in general, the preference kernel $\k$ is non-flat on a global scale, our assumptions imply that it can be approximated by a flat preference kernel locally. This allows us to apply Theorem~\ref{LDP_NoSpatial} on a local scale. In comparison to the setting in~\cite{wireless3}, the introduction of exit times entails that perturbations of the underlying point process can lead to more severe fluctuations in the process of frustrated users. Hence, more refined estimates are needed in order to derive the desired exponential approximation properties.

To make this precise,  we partition the given observation window $W$ into cubes $W^\de = \{W_1, \ldots, W_k\}$ of side length $\de$. Then, we introduce an approximating process as follows. We let a transmitter choose a sub-window $W_i$ according to the preference function, whereas the relay choice within $W_i$ is uniform. More precisely, put $\nuR = \muR$ or $\nuR = \lla$ and let $\Zld(\nuR)$ denote a Poisson point process on the state space $V(Y^\la)=\itf^2 \times W \times Y^\la$ with intensity measure $\la\mu^\de(\nuR, \lla)$ where 
$$\mu^\de(\nuR,\lla)(\d s, \d t, \d x, \d y) = \k^\de_{\nuR, \lla}(y|x)(\muST \otimes \lla)(\d s, \d t, \d x, \d y)$$
and, recalling $\k_{\nuR}$ from \eqref{Kappa_Index}, 
$$\k^\de_{\nuR, \lla}(y|x) = \sum_{i=1}^k \frac{\k_{\nuR}(W_i|x)}{\lla(W_i)}\one\{y \in W_i\}.$$

Note that our verbal description of the approximating process fits best to the process $\Zld(\lla)$. However, here the intensities of the locally flat preference kernels $\k_{\lla}(W_i|x)$ vary in $\la$, even after normalization, so that this setting is not covered by Theorem~\ref{LDP_NoSpatial}. This motivates the approximation $\Zld(\nuR)$ with $\nuR=\muR$ and with $\nuR=\lla$.

Now, note that $\Zld(\nuR)$ is a Poisson point process on the state space $V(Y^\la)$. In equation~\eqref{Busy_Process} we have seen how to construct an empirical measure of frustrated transmitters from such a Poisson point process. In the spatial situation we denote this process by $\g(\Lld(\nuR))$ where 
$$\Lld(\nuR) = \frac1\la \sum_{Z_i \in \Zld(\nuR)} \delta_{Z_i}.$$

To prove Theorem~\ref{LDP_Spatial}, we proceed in four steps. First, we leverage Theorem~\ref{LDP_NoSpatial} to establish an LDP for $\g(\Lld(\muR))$. As above, we denote by
$$\muT^\de(\muR, \muR) = \mu^\de(\muR, \muR) \otimes {\bf U}([0,1]),$$
the intensity measure on the extended state space $V'$.
\begin{proposition}\label{ExpEquiv_Spatial_m}
	The family of random measures $\g(\Lld(\muR))$ satisfies the LDP with good rate function $I^\de(\g) = \inf_{\nn \in \MM':\, \g(\nn) = \g} h(\nn|\muT^\de(\muR,\muR))$.
\end{proposition}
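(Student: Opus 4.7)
The plan is to exploit the local flatness of the approximating kernel $\k^\de_{\muR,\muR}$ in the relay variable $y$ in order to reduce the claim to Theorem~\ref{LDP_NoSpatial} applied separately on each cube $W_i$ of the partition $W^\de = \{W_1,\dots,W_k\}$, and to combine the resulting cube-wise LDPs via independence and the contraction principle.

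First, I would decompose $\Zld(\muR)$ according to the cube containing the selected relay coordinate $y$. Since
\begin{equation*}
\k^\de_{\muR,\muR}(y|x) \;=\; \sum_{i=1}^k \frac{\k_{\muR}(W_i|x)}{\muR(W_i)}\one\{y\in W_i\}
\end{equation*}
is flat in $y$ on each $W_i$, the restriction of $\Zld(\muR)$ to $\itf^2\times W\times W_i$ is an independent Poisson point process whose conditional distribution in $y$ is $\muR|_{W_i}/\muR(W_i)$. Consequently, on the $i$-th stratum the frustration dynamics are exactly those of the non-spatial model with relay mass $\muR(W_i)$ and transmitter intensity $\muST$ tilted by $\k_{\muR}(W_i|\cdot)$. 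Theorem~\ref{LDP_NoSpatial} therefore applies to each stratum separately, and by independence of the strata the joint vector of cube-wise frustration measures satisfies an LDP in the product $\tau$-topology with good rate function equal to the sum of the cube-wise rate functions.

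Since $\g(\Lld(\muR))$ is the $\tau$-continuous sum of its cube-wise contributions, the contraction principle would then yield an LDP for $\g(\Lld(\muR))$ itself. To identify the resulting rate function with the stated $I^\de(\g)$, I would disintegrate along $\{y\in W_i\}_i$: any $\nn\in\MM'$ decomposes as $\nn = \sum_i \nn^{(i)}$ with $\nn^{(i)}$ supported on $\{y\in W_i\}$, the relative entropy splits additively as
\begin{equation*}
h(\nn|\muT^\de(\muR,\muR)) \;=\; \sum_{i=1}^k h\bigl(\nn^{(i)}\big|\muT^\de(\muR,\muR)|_{\{y\in W_i\}}\bigr),
\end{equation*}
and the expression~\eqref{frustUserDefSp} for $\g(\nn)$ decomposes analogously since $\b_s(\nn^{(i)}_y,1)$ only depends on $\nn^{(i)}$. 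Exchanging the outer infimum with the sum then produces $I^\de(\g)$. The main obstacle I expect lies precisely in this last identification: verifying that the cube-wise optimisation problems glue together into the global one without loss. This reduces to the additivity of entropy across disjoint conditioning events together with the cube-locality of the map $\nn\mapsto\g(\nn)$, both of which follow from the product structure of $\muT^\de(\muR,\muR)$ across cubes and the pointwise definition of $\b_s$ through~\eqref{DGL_Gen}.
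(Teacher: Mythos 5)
Your proposal is correct and follows essentially the same route as the paper: decompose the approximating process into independent cube-wise Poisson processes, apply Theorem~\ref{LDP_NoSpatial} on each cube with relay mass $\muR(W_i)$ and tilted transmitter intensity, and recombine by independence together with the additive decomposition of $h(\cdot|\muT^\de(\muR,\muR))$ across the strata $\{y\in W_i\}$. The only cosmetic discrepancy is that the finite-$\la$ kernel normalizes by $\lla(W_i)$ rather than $\muR(W_i)$ (the relays are the deterministic points of $Y^\la\cap W_i$), but this is exactly the setting Theorem~\ref{LDP_NoSpatial} covers, so the argument goes through as you describe.
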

Second, it is possible to switch between $\nuR = \lla$ and $\nuR = \muR$ without changing substantially the approximating process of frustrated transmitters.

\begin{proposition}\label{ExpEquiv_Spatial}
The family of random measures $\g(\Lld(\muR)) - \g(\Lld(\lla))$ is $\Vert\cdot\Vert$-exponentially equivalent to zero.
\end{proposition}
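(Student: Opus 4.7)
The plan is to construct a coupling of the Poisson point processes $\Zld(\muR)$ and $\Zld(\lla)$ on a common probability space under which their symmetric difference is exponentially small, and then to bound the resulting discrepancy in $\g$ by controlling the cascade of frustration-status changes triggered by each differing point. For the coupling I would use the standard Poisson superposition/thinning construction. Since the two intensity measures agree on the uniform within-cube relay choice and differ only in the cube-selection weights, direct computation gives
\[
\|\mu^\de(\muR,\lla) - \mu^\de(\lla,\lla)\| = \int\sum_{i}\bigl|\k_\muR(W_i|x) - \k_\lla(W_i|x)\bigr|\,\muST(\d s, \d t, \d x) =: \varepsilon_\la.
\]
By the boundedness and continuity assumptions (1)--(2) on $\k$, together with the weak convergence $\lla\to\muR$ and dominated convergence, one obtains $\varepsilon_\la\downarrow 0$. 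Under the coupling, $|\Zld(\muR)\triangle\Zld(\lla)|$ is a Poisson random variable with mean $\la\varepsilon_\la$, so Lemma~\ref{AbsoluteContinuity} part (3) yields that $\la^{-1}|\Zld(\muR)\triangle\Zld(\lla)|$ is an exponentially good approximation of zero.

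The main obstacle, as announced in the outline, is transferring this closeness from the level of the empirical measures to the level of $\g$. Toggling the presence of a single transmitter can alter the occupancy of one relay throughout the entire transmission interval $[S_i, T_i]$ and hence flip the frustration status of all other transmitters selecting that relay during an overlapping window. I would handle this cascade by introducing a $\la$-indexed previsible set-valued process $A^{\de,\la}_*$ consisting of the transmitters in the symmetric difference together with all transmitters selecting one of the affected relays within a possibly overlapping time interval. The crucial ingredient is that the locally flat structure of $\k^\de_{\nuR,\lla}$ bounds the arrival rate at any individual relay $y_j\in W_i$ by $\k_\infty\mu^{\ms s}_{\ms T}(W)/\lla(W_i)$, which is uniformly bounded in $\la$ for fixed $\de$. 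Hence the total number of arrivals at any given relay over $\itf$ is Poisson with parameter bounded uniformly in $\la$, and the cumulative cascade size is stochastically dominated by a Poisson variable with parameter proportional to $|\Zld(\muR)\triangle\Zld(\lla)|$.

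To conclude, one observes that every transmitter whose contribution to $\|\g(\Lld(\muR)) - \g(\Lld(\lla))\|$ is non-zero must itself lie in the cascade set $A^{\de,\la}_*$; otherwise, its relay exhibits the same occupancy history in both realizations and its status cannot differ. Therefore
\[
\|\g(\Lld(\muR)) - \g(\Lld(\lla))\| \le (\Lld(\muR) + \Lld(\lla))(A^{\de,\la}_*).
\]
Applying Lemma~\ref{stoDomLem} part (3) in the $\la$-dependent form noted after its proof, together with the Poisson concentration estimate for the cascade size from the previous paragraph, shows that $\Lld(\muR)(A^{\de,\la}_*)$ and $\Lld(\lla)(A^{\de,\la}_*)$ are both exponentially good approximations of zero, which establishes the claimed exponential equivalence.
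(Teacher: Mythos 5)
Your argument is essentially the paper's proof: your superposition coupling is the paper's minimum-kernel process $\Lmin$, your ``cascade set'' of transmitters pointing to affected relays is exactly the set $\pi^{-1}(\Ycrit)$ appearing in Lemma~\ref{totVarRandBoundLem} (whose occupancy-history invariance for non-critical relays you correctly identify as the key point), and your Cox/Poisson concentration bound on its mass matches the paper's conditional-intensity estimate concluded via Lemma~\ref{AbsoluteContinuity} part (3). The only blemish is the appeal to Lemma~\ref{stoDomLem} part (3), which concerns sets with small Lebesgue-measure time-sections in the $(t,x,u)$-coordinates and does not directly cover a set defined by relay membership; the Poisson concentration estimate you invoke alongside it is the correct (and the paper's actual) mechanism.
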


Third, $\g(\Lld(\lla))$ is an exponentially good approximation of $\G^\la$.
\begin{proposition}\label{ExpEquiv_Spatial_2}
The family of random measures $\g(\Lld(\lla))$ is an $\Vert\cdot\Vert$-exponentially good approximations of $\G^\la$.
\end{proposition}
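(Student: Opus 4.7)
My plan is to build an explicit coupling of the driving Poisson processes $Z^\la$ (intensity $\la\mu(\lla)$) and $\Zld(\lla)$ (intensity $\la\mu^\de(\lla,\lla)$), quantify the discrepancy via a previsible set of ``critical users'' in the spirit of Sections~\ref{existAppSec}--\ref{app2Sec}, and conclude by the Poissonian tail estimate in part~(3) of Lemma~\ref{AbsoluteContinuity} together with Lemma~\ref{stoDomLem}.

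Both intensities share the $(s,t,x)$-marginal $\la\muST$, so I couple them over a common arrival process $\{(S_i,T_i,X_i)\}_i$ and attach to each arrival a pair $(Y_i,Y_i^\de)$ of relay labels drawn from an optimal coupling of $\k_{\lla}(\cdot|X_i)\lla$ and $\k^\de_{\lla,\lla}(\cdot|X_i)\lla$. The per-user disagreement probability is
\begin{align*}
  d^\de_x \;=\; \tfrac12\int_W\big|\k(x,y)-\k^\de_{\lla,\lla}(y|x)\big|\lla(\d y).
\end{align*}
Joint continuity of $\k$ at $\mu^{\ms s}_{\ms T}\otimes\muR$-a.e.~point, the bound $\k_\infty<\infty$, and the weak convergence $\lla\to\muR$ together yield $\int_W d^\de_x\,\mu^{\ms s}_{\ms T}(\d x)\to 0$ as $\de\downarrow 0$, uniformly in large $\la$ via dominated convergence. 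Consequently, the set $\mc D^\la_\de=\{i:\,Y_i\ne Y_i^\de\}$ of \emph{primary discrepancies} is stochastically dominated by a Poisson variable with vanishing normalized intensity, and part~(3) of Lemma~\ref{AbsoluteContinuity} shows that $|\mc D^\la_\de|/\la$ is an exponentially good approximation of zero as $\de\downarrow 0$.

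The main obstacle is the cascade effect flagged already in the introduction: even when $Y_i=Y_i^\de$, the frustrated status may differ in the two processes because earlier primary discrepancies have flipped the occupancy history at $Y_i$. To control this I define a previsible set $A^{\de,\la}_\ast\subset V$ of \emph{critical users}---those whose relay choice itself differs, or whose chosen relay coincides with one of the two perturbed relays $\{Y_j,Y_j^\de\}$ associated with some earlier primary discrepancy $j$, during its still-active window $[S_j,T_j]$. Using $\k_\infty<\infty$ and the uniform positivity granted by assumption~(3) on the preference kernel, any specific relay is selected with probability $O(n_\la^{-1})=O(\la^{-1})$, so each primary discrepancy seeds an expected $O(1)$ downstream cascade. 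Enumerating arrivals in time order to ensure previsibility and invoking part~(3) of Lemma~\ref{stoDomLem} then lifts this expectation bound to the exponential-tail statement that $\Lla(A^{\de,\la}_\ast)$ is an exponentially good approximation of zero.

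Since in the coupling $\Vert\G^\la-\g(\Lld(\lla))\Vert\le 2\la^{-1}\Lla(A^{\de,\la}_\ast)$, the two bounds combine to give
\begin{align*}
  \limsup_{\de\downarrow 0}\limsup_{\la\uparrow\infty}\la^{-1}\log\P\big(\Vert\G^\la-\g(\Lld(\lla))\Vert>\e\big)=-\infty
\end{align*}
for every $\e>0$, which is the required exponentially good approximation. The cascade control is the genuinely new ingredient compared to \cite{wireless3}, since without exit times a primary discrepancy has no downstream effect, whereas here the finite transmission times couple each perturbation to an $O(1)$ expected number of later users and careful Poissonian bookkeeping becomes essential.
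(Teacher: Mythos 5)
Your overall strategy -- couple the two driving Poisson processes, isolate a set of ``critical'' users, and finish with Poissonian tail bounds via Lemma~\ref{AbsoluteContinuity}~(3) -- is the same family of argument as the paper's, which uses the pointwise-minimum kernel $\k^{\de,\ms{min}}$ as the coupling and Lemma~\ref{totVarRandBoundLem} to bound the total-variation discrepancy by the mass of users pointing to critical relays. However, your cascade control contains a genuine gap: you declare a user critical only if its relay choice differs, or if it points to one of the perturbed relays $\{Y_j,Y_j^\de\}$ of a primary discrepancy $j$ \emph{during the window $[S_j,T_j]$}. This set does not contain all users whose frustration status differs in the two processes. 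Concretely: suppose in process~1 the primary discrepancy $j$ occupies $Y_j$ on $[S_j,T_j]$ while in process~2 it does not. A user $i_1$ with $S_{i_1}\in(S_j,T_j)$ pointing to $Y_j$ may then be frustrated in process~1 but satisfied in process~2, so that $Y_j$ is occupied in process~2 (but free in process~1) on $(T_j, T_{i_1}]$. A further user $i_2$ with $S_{i_2}\in(T_j,T_{i_1})$ pointing to $Y_j$ then has different statuses in the two processes, yet $i_2$ lies outside your set. The occupancy discrepancy at a critical relay propagates forward in time through such chains indefinitely, so the claimed bound $\Vert\G^\la-\g(\Lld(\lla))\Vert\le 2\la^{-1}\Lla(A^{\de,\la}_\ast)$ fails as stated.

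The repair is to drop the time restriction entirely: the frustration dynamics decompose relay by relay (a user's status depends only on the marked point process of users selecting the \emph{same} relay), so the cascade never leaves the critical relays, and the discrepancy is bounded by the number of users, over the whole horizon $\itf$, pointing to a relay whose incoming user set differs between the two processes. This is exactly the content of Lemma~\ref{totVarRandBoundLem}, whose inductive proof formalizes the per-relay decomposition; the expected number of users per relay over all of $\itf$ is still $O(\la/n_\la)=O(1)$, so your quantitative conclusion survives. Conditionally on the set of critical relays the count of users pointing to them is a Cox variable whose intensity is controlled by the number of primary discrepancies (this is how the paper avoids a direct appeal to Lemma~\ref{stoDomLem}~(3), which is formulated for the $[0,1]$-marked non-spatial state space and would require an extra encoding step in your version). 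With the time restriction removed and the Cox-process bookkeeping made explicit, your argument matches the paper's.
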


Finally,  after having established Propositions~\ref{ExpEquiv_Spatial_m},~\ref{ExpEquiv_Spatial} and~\ref{ExpEquiv_Spatial_2}, in Section~\ref{unifBoundSec} the proof of Theorem~\ref{LDP_Spatial} is completed by identifying the rate function.

\section{Proof of Theorem~\ref{LDP_Spatial} and its supporting results}\label{thm2Sec}

\subsection{Proof of Propositions~\ref{ExpEquiv_Spatial_m}}

In order to prove Proposition~\ref{ExpEquiv_Spatial_m}, we perform a reduction to the setting of flat preference functions considered in Theorem~\ref{LDP_NoSpatial}. 
\begin{proof}[Proof of Proposition~\ref{ExpEquiv_Spatial_m}]
	First, we decompose $\g(\Lld(\muR))$ into a sum of independent random measures
	$$\g(\Lld(\muR)) = \sum_{i \le k} \g(L_\la^{ \de, i}(\muR)),$$
	where $L_\la^{\de, i}(\muR)$ is the empirical measure associated with a Poisson point process on $V(Y^\la \cap W_i)$ with intensity measure 
	$$\la\frac{\k_{\muR}(W_i|x)}{\lla(W_i)} \one\{ y \in W_i\} (\muST \otimes  \lla)(\d s, \d t, \d x, \d y).$$
	Then, Theorem~\ref{LDP_NoSpatial} shows that $\g(L_\la^{ \de, i}(\muR))$ satisfies the LDP
	with good rate function 
	$$ \g \mapsto \inf_{\nu \in \MM: \, \g(\nu, \muR(W_i)) = \g} h(\nu|\mu_{\ms{T}, i}),$$
	where 
	$$\mu_{\ms{T}, i}(\d s, \d t, \d x, \d u) = \k_{\muR}(W_i|x)\mu_{\ms T}(\d s, \d t, \d x, \d u).$$
	Finally, by independence, we conclude from the identity  
$$\muT^\de(\muR,\muR) = \sum_{i \le k}\mu_{\ms{T}, i}(\d s, \d t, \d x, \d u)\otimes \frac{\one\{ y \in W_i\}\muR(\d y)}{\muR(W_i)} $$
 that $\g(\Lld(\muR))$ satisfies an LDP with good rate function 
 $$ \g \mapsto \inf_{\nn \in \MM': \, \g(\nn) = \g} h(\nn|\muT^\de(\muR,\muR)),$$
 as required.
 \end{proof}

\subsection{Proofs of Propositions~\ref{ExpEquiv_Spatial} and~\ref{ExpEquiv_Spatial_2}}

Lemma~\ref{totVarRandBoundLem} below shows that the total-variation distance between $\g(\Lld(\muR))$ and $\g(\Lld(\lla))$ can be computed in two steps. First, we determine the set of \emph{critical relays}. That is, those relays that are chosen in one of the processes but not the other. Second, we determine the number of transmitters pointing to the critical relays. 

More precisely, for any empirical measure $\nu$ on $V(Y^\la)$ and any relay $y\in Y$,
$$\pi(\nu)^{-1}(y) = \{(S_i, T_i, X_i, Y_i) \in \text{supp}(\nu):\, Y_i = y\}$$
denotes the set of all transmitters selecting relay $y$.  Then, 
	$$\Ycrit(\nu, \nu') = \{y \in Y:\, \pi(\nu)^{-1}(y) \ne \pi(\nu')^{-1}(y)\}$$
    denotes the set of \emph{critical relays}. In other words, non-critical relays must be chosen by the same transmitters in $\nu$ and in $\nu'$. Recalling the definition of $\MM_\r$ from~\eqref{mmrEq}, in the next result, we provide a concise bound on the total-variation distance between two transmitter processes in terms of critical relays. We denote by $\nu' \le \nu$ stochastic dominance of measures, that is $\nu' (A)\le \nu(A)$ for all $A\in \mathcal{B}(\hat V)$.
\begin{lemma}
	\label{totVarRandBoundLem}
	Let $\nu, \nu'\in \MM_\r$ be such that $\nu' \le \nu$. Then,
	$$\Vert \g(\nu) - \g(\nu') \Vert \le \nu(\pi(\nu)^{-1}(\Ycrit(\nu,\nu'))).$$
\end{lemma}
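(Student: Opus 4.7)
The plan is to exploit the \emph{locality} of the frustration dynamics. Because each transmitter commits to exactly one relay and a frustrated user simply leaves the system without retrying, the first-come-first-served occupancy pattern at any given relay $y$, and hence the frustration status of every transmitter selecting $y$, is determined entirely by the set $\pi(\nu)^{-1}(y)$ of selectors of $y$, independently of what happens at any other relay. Consequently, for every $y \notin \Ycrit(\nu,\nu')$, the defining identity $\pi(\nu)^{-1}(y) = \pi(\nu')^{-1}(y)$ forces the frustrated transmitters at $y$ to coincide under $\nu$ and under $\nu'$.

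To turn this observation into a bound on the total-variation distance, I would let $F(\nu) \subseteq \supp(\nu)$ denote the set of frustrated transmitters of $\nu$ and write $\tilde\pi$ for the projection $(s,t,x,y) \mapsto (s,t,x)$, so that $\g(\nu) = \r\sum_{z \in F(\nu)} \de_{\tilde\pi(z)}$. Splitting the frustrated users according to whether their chosen relay is critical or not, and invoking the locality observation to cancel the non-critical contributions, one obtains, for every $A \in \mathcal{B}(\itf^2 \times W)$,
\begin{align*}
\g(\nu)(A) - \g(\nu')(A) &= \r\bigl|F(\nu) \cap \pi(\nu)^{-1}(\Ycrit(\nu,\nu')) \cap \tilde\pi^{-1}(A)\bigr| \\
&\quad - \r\bigl|F(\nu') \cap \pi(\nu')^{-1}(\Ycrit(\nu,\nu')) \cap \tilde\pi^{-1}(A)\bigr|.
\end{align*}

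Both summands are nonnegative, so the absolute value of their difference is bounded by their maximum. Using $\nu' \le \nu$, which yields $\supp(\nu') \subseteq \supp(\nu)$ and hence $\pi(\nu')^{-1}(\Ycrit(\nu,\nu')) \subseteq \pi(\nu)^{-1}(\Ycrit(\nu,\nu'))$, each of the two summands is in turn dominated by the unconditional count $\r|\supp(\nu) \cap \pi(\nu)^{-1}(\Ycrit(\nu,\nu'))| = \nu(\pi(\nu)^{-1}(\Ycrit(\nu,\nu')))$. Taking the supremum over $A$ then yields the stated bound. The only substantive ingredient is the locality assertion, which holds precisely because a transmitter does not probe any alternative relay after a failed attempt, so the occupancy evolutions of distinct relays are decoupled.
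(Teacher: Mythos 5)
Your proof is correct and follows essentially the same route as the paper's: both arguments rest on the per-relay locality of the occupancy dynamics (which the paper makes explicit via induction on arrival times, exactly your ``first transmitter is always satisfied'' recursion), cancel the contributions of non-critical relays, and bound the remaining critical contribution by $\nu(\pi(\nu)^{-1}(\Ycrit(\nu,\nu')))$ using $\supp(\nu')\subseteq\supp(\nu)$. The only cosmetic difference is that the paper bounds the total-variation norm via the symmetric difference of the supports of $\g(\nu)$ and $\g(\nu')$ summed over relays, whereas you evaluate $\g(\nu)(A)-\g(\nu')(A)$ testwise; these are interchangeable.
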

\begin{proof}
	First, the total-variation distance $\Vert \g(\nu) - \g(\nu')\Vert$ equals
	\begin{align*}
&\r\#(\text{supp}(\nu)\D\text{supp}(\nu'))\cr
&=\r\sum_{y\in \Ycrit(\nu, \nu')} \#(\pi^{-1}(\g(\nu))(y)\Delta \pi^{-1}(\g(\nu'))(y)) + \r\sum_{y\not\in \Ycrit(\nu, \nu')}\#(\pi^{-1}(\g(\nu))(y)\Delta \pi^{-1}(\g(\nu'))(y)).
\end{align*}
	$$$$ 
Clearly, the first summand is bounded above by $\nu(\pi(\nu)^{-1}(\Ycrit(\nu,\nu')))$. Hence, it remains to show that the second summand vanishes. In other words, we claim that a transmitter pointing to a non-critical relay is frustrated in $\nu$ if and only if it is frustrated in $\nu'$. 

To prove this claim, we perform induction on the arrival time of the transmitter, noting that the first transmitter pointing to a relay is always satisfied. Now, let $(S_i, T_i, X_i, Y_i) \in \nu$ be a transmitter pointing to a non-critical relay $Y_i$ and assume that we have proven the claim for transmitters arriving before $S_i$. By induction hypothesis, the relay $Y_i$ is already occupied at time $S_i$ in $\nu$ if and only if it is already occupied at time $S_i$ in $\nu'$. Therefore, also $(S_i, T_i, X_i, Y_i)$ is frustrated in $\nu$ if and only if it is frustrated in $\nu'$.
\end{proof}

 In order to compare the empirical measures $\Lld(\lla)$, $\Lld(\muR)$ and $L_\la$, it is essential to understand the differences in the intensity measures $\k^\de_{\lla, \lla}$, $\k^\de_{\muR, \lla}$ and $\k_{\lla}$. Therefore, we recall the following intensity bound from~\cite[Lemma 5.4]{wireless3}, where
$\mula = \musp \otimes \lla.$

\begin{lemma}
	\label{intBoundLem}
\begin{enumerate}
\item	Let $\de > 0$ be arbitrary. Then,
	$\lim_{\la \uparrow \infty}\int|\k^\de_{{\muR, \lla}} - \k^\de_{\lla, \lla}|\d \mula= 0.$
\item It holds that
	$\lim_{\de \downarrow 0}\lim_{\la  \uparrow \infty}\int|\k^\de_{\lla, \lla} -\k_{\lla}|\d \mula= 0.$
\item  It holds that
	$\lim_{\de \downarrow 0}\lim_{\la  \uparrow \infty}\int|\k^\de_{\muR, \muR} -\k_{\muR}|\d (\musp\otimes\muR)= 0.$
\end{enumerate}
\end{lemma}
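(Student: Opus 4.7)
Since the three assertions mirror those of \cite[Lemma 5.4]{wireless3}, I would reduce each to an explicit computation that unfolds the definitions of $\k^\de_{\nuR,\lla}$ and $\k_\nuR$, and then invoke the continuity assumptions on $\k$ together with the weak convergence $\lla \to \muR$. Throughout, the $\k_\infty$-bound on the preference function provides a uniform dominating constant on the finite partition $W^\de = \{W_1,\ldots,W_k\}$. I would also fix, once and for all, a partition whose boundaries satisfy $\muR(\partial W_i) = 0$ for every $i$, which is possible by a generic translation since Lebesgue-negligible boundaries are the default in this setting.

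For part~(1), the densities $\k^\de_{\muR,\lla}$ and $\k^\de_{\lla,\lla}$ share the common denominator $\lla(W_i)$ on $W_i$, so Fubini (with the $\lla$-integral over $W_i$ cancelling the denominator) gives
$$\int |\k^\de_{\muR,\lla} - \k^\de_{\lla,\lla}|\,\d\mula \;=\; \int \musp(\d x) \sum_{i=1}^k \bigl|\k_\muR(W_i|x) - \k_\lla(W_i|x)\bigr|.$$
For $\musp$-a.e.~$x$, $\k(\cdot,x)$ is $\muR$-a.e.~continuous by assumption; combining this with the weak convergence $\lla \to \muR$ and $\muR(\partial W_i) = 0$ yields pointwise convergence of each summand to $0$ as $\la \to \infty$. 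The bound $2k\k_\infty$ dominates the integrand uniformly in $\la$, and dominated convergence in $x$ finishes the proof.

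For parts~(2) and~(3), the relevant quantity is the deviation of $\k(y|x)$ from its $W_i$-average with respect to $\nuR\in\{\lla,\muR\}$, which I would bound by the oscillation of $\k(\cdot|x)$:
\begin{align*}
\int |\k^\de_{\nuR,\nuR} - \k_\nuR|\,\d(\musp \otimes \nuR)
&= \int \musp(\d x) \sum_i \int_{W_i} \left| \frac{\int_{W_i}\k(z|x)\,\nuR(\d z)}{\nuR(W_i)} - \k(y|x) \right| \nuR(\d y)\\
&\le \int \musp(\d x) \sum_i \nuR(W_i)\,\mathop{\mathrm{osc}}_{W_i}\k(\cdot|x).
\end{align*}
For part~(3), $\musp \otimes \muR$-a.e.~continuity of $\k$ implies that the oscillation tends to $0$ as $\de \downarrow 0$ for $\musp$-a.e.~$x$ and $\muR$-a.e.~$y$, and dominated convergence with bound $2\k_\infty$ gives the claim. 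For part~(2), I would first pass $\la \to \infty$ at fixed $\de$: weak convergence together with $\muR(\partial W_i)=0$ reduces the expression to its $\muR$-analog, to which the argument of part~(3) then applies.

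The main technical subtlety lies in part~(2), where the measure $\lla$ varies with $\la$ and one must justify interchanging $\la \to \infty$ with the $y$-integration and the sum over the partition. This is handled by observing that windows $W_i$ with $\lla(W_i) = 0$ contribute nothing to either kernel under the natural convention $0/0 = 0$ and can be dropped, while the remaining sum has finitely many terms each uniformly bounded by $\k_\infty\nuR(W_i)$. With these observations, all limit-interchanges become routine applications of weak convergence and dominated convergence.
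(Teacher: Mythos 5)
Your argument is correct, but note that the paper does not actually prove this lemma: it simply cites \cite[Lemma 5.4]{wireless3}, so what you have written is a self-contained reconstruction of the omitted proof rather than a parallel to one in the text. Your reconstruction follows the standard route and all three parts check out: the cancellation of the common denominator $\lla(W_i)$ in part (1) followed by the portmanteau theorem for bounded, $\muR$-a.e.\ continuous integrands (which is exactly where $\muR(\partial W_i)=0$ is needed), and the oscillation bound plus dominated convergence for parts (2) and (3). The one step worth making explicit is why $\sum_i \nuR(W_i)\,\mathrm{osc}_{W_i}\k(\cdot|x)\to 0$ under mere $\muR$-a.e.\ continuity: the sup-oscillation of a fixed small cube need not be small if the cube meets the discontinuity set, so one should read the sum as $\int_W \mathrm{osc}_{W_{i_\de(y)}}\k(\cdot|x)\,\nuR(\d y)$ and observe that for every continuity point $y$ the whole cube $W_{i_\de(y)}$ lies in a $\sqrt{d}\,\de$-ball around $y$, whence $\mathrm{osc}_{W_{i_\de(y)}}\k(\cdot|x)\le 2\sup_{|z-y|\le\sqrt{d}\de}|\k(z|x)-\k(y|x)|\to 0$; your phrase ``for $\muR$-a.e.\ $y$'' indicates you have this in mind, but it is the crux and should be spelled out. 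Your handling of the order of limits in part (2) (first $\la\uparrow\infty$ at fixed $\de$, using that the partition is finite and $\lla(W_i)\to\muR(W_i)$, then $\de\downarrow 0$) and of the degenerate windows with $\lla(W_i)=0$ is also correct.
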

\begin{proof}
	This is shown in~\cite[Lemma 5.4]{wireless3}.
\end{proof}

Now, we conclude the proof of Propositions~\ref{ExpEquiv_Spatial} and~\ref{ExpEquiv_Spatial_2}.

\begin{proof}[Proofs of Propositions~\ref{ExpEquiv_Spatial} and~\ref{ExpEquiv_Spatial_2}]
For the proof of Proposition~\ref{ExpEquiv_Spatial}, we let $\k^{\de,\ms{min}}$ denote the pointwise minimum of $\k^\de_{\muR, \lla}$ and $\k^\de_{\lla, \lla}$  and write $\Lmin$ for the empirical measure of the associated Poisson point process. In particular, 
	$$\Vert \g(\Lld(\muR)) - \g(\Lld(\lla)) \Vert \le \Vert \g(\Lld(\muR)) - \g(\Lmin) \Vert + \Vert \g(\Lmin) - \g(\Lld(\lla)) \Vert.$$
	We only derive a bound for the first summand, as we can proceed similarly for the second one.

	First, Lemma~\ref{totVarRandBoundLem} gives that 
	\begin{align*}
		\Vert \g(\Lld(\muR)) - \g(\Lmin) \Vert \le  \Lld(\muR)(\pi^{-1}(\Lld(\muR))(\Ycrit(\Lld(\muR),\Lmin))).
\end{align*}
By stochastic monotonicity, the right-hand side is bounded above by
\begin{align}
\label{expEquivEq}
\Lld(\muR)(\hat V)-\Lmin(\hat V) + \Lmin(\pi^{-1}(\Lmin)(\Ycrit(\Lld(\muR),\Lmin))).
\end{align}
For the first part, note that $\la(\Lla(\muR)(\hat V)-\Lmin(\hat V))$ is a Poisson random variable with parameter $\int|\k^\de_{\muR,\lla} - \k^{\de,\ms{min}}|\d {\mula}$. Hence, by part (1) of Lemma~\ref{intBoundLem},
	\begin{align}
\label{expEquivEq_1}
\limsup_{\la \uparrow\infty}\la^{-1}\log\P(\Lld(\muR)(\hat V)-\Lmin(\hat V) > \e ) = -\infty.
\end{align}
	For the second summand in~\eqref{expEquivEq}, we observe that $\Ycrit(\Lld(\muR),\Lmin)$ is measurable w.r.t.~$\Lld(\muR)-\Lmin$ and therefore independent of $\Lmin$. Hence, the expression 
	$$\la\Lmin(\pi^{-1}(\Lmin)(\Ycrit(\Lld(\muR),\Lmin)))$$ 
	is a Cox random variable with random intensity
	$$B^{\de, \la} = \int_{\itf^2\times W} \sum_{y \in \Ycrit(\Lld(\muR),\Lmin)} \k^{\de,\ms{min}}(y|x) (\muST)(\d s, \d t, \d x).$$
Since $\Ycrit(\Lld(\muR),\Lmin)$ is bounded from above by 
		$$\la (\Lld(\muR)(\hat V)-\Lmin(\hat V)),$$
		by \eqref{expEquivEq_1} we have 
$$\limsup_{\la \uparrow \infty} \la^{-1}\log\P(B^{\de, \la} > \e \la) = -\infty.$$
Moreover, 	
	\begin{align*}
&\P(\Lmin(\pi^{-1}(\Lmin)(\Ycrit(\Lld(\muR),\Lmin)))>\e)\le \P(N^{\k_\infty\muT(V)\e'\la}>\e\la)+\P(B^{\de, \la} > \e' \la)
\end{align*}
where $N^{\k_\infty\muT(V)\e'\la}$ denotes a Poisson random variable with parameter $\k_\infty\muT(V)\e'\la$. Hence, part (3) of Lemma~\ref{AbsoluteContinuity} concludes the proof Proposition~\ref{ExpEquiv_Spatial}.

\medskip
For the proof of Proposition~\ref{ExpEquiv_Spatial_2}, we proceed similarly. This time,  $\k^{\de,\ms{min}}$ is the pointwise minimum of $\k^\de_{\muR, \lla}$ and $\k_{\lla}$, so that 
	$$\Vert \g(\Lld(\muR)) - \G^\la \Vert \le \Vert \g(\Lld(\muR)) - \g(\Lmin) \Vert + \Vert \g(\Lmin) - \G^\la \Vert$$
and we can proceed as above, applying part (2) of Lemma~\ref{intBoundLem} instead of part (1).
\end{proof}

\subsection{Identification of the rate function and proof of Theorem~\ref{LDP_Spatial}}
\label{unifBoundSec}
Propositions~\ref{ExpEquiv_Spatial_m} -~\ref{ExpEquiv_Spatial_2} imply already that $\G^\la$ satisfies an LDP, but we do not know yet whether the rate function is of the form asserted in Theorem~\ref{LDP_Spatial}. In order to apply the machinery from~\cite[Theorem 4.2.23]{dz98},
we need uniform bounds on the total-variation distance of frustrated transmitters and the approximating process in the space of absolutely continuous measures. 

To achieve this goal, we proceed in several steps. First, in Section~\ref{approxTimeMeasSec}, we introduce an extension of the approximating process considered in Definition~\ref{SysODE} that is capable of reflecting not only fluctuations in time but also in the measures. Next, in Section~\ref{couplSec}, we introduce a coupling construction allowing us to represent both the frustrated transmitters and their approximations as functions of a common coupling measure. Finally, these two ingredients are combined in Section~\ref{unifBoundSec2} to derive the desired uniform approximation bound.

\subsubsection{Approximation w.r.t.~both time and measure}
\label{approxTimeMeasSec}

In Section~\ref{Outline_One}, we have introduced the process of frustrated transmitters as a limit of carefully chosen time-discretized approximations. In the following, we construct approximations not only w.r.t.~time but also w.r.t.~the measure. 

\begin{definition}
Let $\nu, \nu' \in \Mac(\muT)$ be such that $\nu \le \nu'$. Then, we consider the following  system of differential equations
        \begin{equation}\label{SysODESpatial}
                \begin{split}
                        \ai_t&=\ai_{(k-1)\de}-\int_{(k-1)\de}^t\nu'(\d s,[0,\TF],W,[0,\ai_s])\cr
                        \ai_{k\de}&=\ai_{k\de-} + \ao{,k-1}_{k\de-}\cr
                        \ao{,k-1}_{t}&=\ao{,k-1}_{(k-1)\de}-\int_{(k-1)\de}^{t}\nu'(\d s,[0,\TF],W,[\ai_s,\ai_s+\ao{,k-1}_s])\cr
                        \ao{,j}_{t}&=\int_{0}^{t}\nu(\d s,\D_\de(j),W,[0,\ai_s])\cr
                        \ac_{t}&=\ac_{(k-1)\de}+\int_{(k-1)\de}^{t}\nu(\d s,\D_\de(k-1),W,[0,\ai_s + \ao{,k-1}_s])\cr
                    &\phantom{=}+\int_{0}^{t}\nu(\d s, [k\de, \TF],W,[\ai_s,\ai_s + \ao{,k-1}_s])\cr
                        \acn_{t}&=\int_{0}^{t}(\nu'-\nu)(\d s, \itf,W, [0,\ai_s + \ao{,k-1}_s])\cr
                        \ao{,k-1}_{k\de}&=0
                \end{split}
        \end{equation}
        where $j\ge k$ and the initial condition is given by $\ai_0=1$ and all other quantities equal to zero.
\end{definition}

If $\nu$ and $\nu'$ are absolutely continuous, then the system~\eqref{SysODESpatial} has a unique solution.

\begin{lemma}
        \label{spatialExLem}Let $\nu, \nu' \in \Mac(\muT)$ be such that $\nu \le \nu'$. Then, the system~\eqref{SysODESpatial} has a unique solution $(\ai(\nu,\nu'), \ao{,*}(\nu,\nu'), \ac{}(\nu,\nu'),\acn(\nu,\nu'))$.
\end{lemma}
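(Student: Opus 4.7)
The plan is to adapt the inductive argument from the proof of Proposition~\ref{exUnAppProp}: by induction on $k$ it suffices to establish existence and uniqueness on each interval $\D_\de(k-1)$, with the initial values at $(k-1)\de$ provided by the previous step and with the boundary updates $\ai_{k\de} = \ai_{k\de-} + \ao{,k-1}_{k\de-}$ and $\ao{,k-1}_{k\de} = 0$ transferring the solution across the endpoint. The key observation is that in the system~\eqref{SysODESpatial} only the two components $\ai$ and $\ao{,k-1}$ are driven implicitly (through integration limits involving the unknown itself), and both are driven by $\nu'$ alone. Once these two are constructed on the interval, the remaining quantities $\ao{,j}$ (for $j \ge k$), $\ac$ and $\acn$ are determined by explicit integration against $\nu$ or against the measure $\nu' - \nu$, so they require no fixed-point argument.

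For $\ai$, I would repeat the substitution used in Proposition~\ref{exUnAppProp}. Setting $\bi_t = \ai_{(k-1)\de} - \ai_t$ converts the equation for $\ai$ into
\begin{align*}
\bi_t = \int_{(k-1)\de}^t \nu'\big(\d s, [0,\TF], W, [0, \ai_{(k-1)\de} - \bi_{s-}]\big),
\end{align*}
and rescaling via $\wt\nu(\d s, \d u) = \nu'(\d s, [0,\TF], W, \ai_{(k-1)\de} \cdot \d u)$ puts the equation into the scalar form $\bi_t = \int \wt\nu(\d s, [0, 1 - \bi_{s-}/\ai_{(k-1)\de}])$ treated in~\cite[Proposition~2.2]{wireless3}. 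Since $\nu' \in \Mac(\muT)$, the rescaled measure $\wt\nu$ inherits absolute continuity, so the cited proposition yields a unique solution.

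For $\ao{,k-1}$ an analogous substitution works: replacing $\nu'$ by $\wt\nu(\d s,\d u) = \nu'(\d s, [0,\TF], W, \ai_{s-} + \ao{,k-1}_{(k-1)\de} \cdot \d u)$, with $\ai_{s-}$ already determined from the step just completed, produces a scalar equation of the same form, so another application of~\cite[Proposition~2.2]{wireless3} suffices. Having fixed $\ai$ and $\ao{,k-1}$, the equations for $\ao{,j}$, $\ac$ and $\acn$ then exist and are unique by plain integration against Borel sets whose endpoints are already known functions of $s$. The only place where the spatial hypothesis $\nu \le \nu'$ enters is in ensuring that $\nu' - \nu$ is a nonnegative (and absolutely continuous) measure, which is exactly what makes the definition of $\acn$ meaningful; beyond this observation I do not anticipate any obstacle, since the whole argument is a bookkeeping exercise that reduces everything nontrivial to the scalar ODE machinery of~\cite{wireless3}.
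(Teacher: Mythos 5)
Your proposal is correct and follows exactly the route the paper intends: the paper omits the proof with the remark that it goes "along the lines of Proposition~\ref{exUnAppProp}", and your argument is precisely that adaptation — reducing the two implicit equations (for $\ai$ and $\ao{,k-1}$, both driven by $\nu'$) to the scalar setting of~\cite[Proposition~2.2]{wireless3} via the same substitutions, with the remaining components obtained by explicit integration and the hypothesis $\nu\le\nu'$ only needed to make $\nu'-\nu$ a nonnegative measure in the equation for $\acn$.
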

\begin{proof}
Since Lemma~\ref{spatialExLem} can be shown along the lines of Proposition~\ref{exUnAppProp}, we omit the proof.
\end{proof}

Conceptually, $\ai(\nu, \nu')$ should capture the relays that can be guaranteed to be idle in the face of uncertainties stemming from both time and measure fluctuations. In particular, $\ai(\nu, \nu')$ should be smaller than both $\ai(\nu)$ and $\ai(\nu')$ since in the latter approximations only time fluctuations are taken into account. The next result provides a rigorous argument showing that this intuition is correct.

\begin{lemma}
        \label{appDomCoupLem}
        Let $\nu, \nu' \in \Mac(\muT)$ be such that $\nu \le \nu'$. Then,
        \begin{enumerate}
        \item for every $t \le \TF$, we have $\addi{\de}_t(\nu, \nu') \le \addi{\de}_t(\nu) \wedge \addi{\de}_t(\nu')$, and
        \item $\addi{\de}_t(\nu)\vee \addi{\de}_t(\nu')  - \addi{\de}_t(\nu, \nu') \le \addc{\de}{}_{t}(\nu, \nu') + \addcn{\de}{}_{t}(\nu, \nu')$.
\end{enumerate}
\end{lemma}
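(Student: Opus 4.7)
The plan is to follow the template of Lemma~\ref{appDomCoupRenLem}, with $(\nu,\nu')$ playing the role of $(\rho,\nu)$. I would proceed by induction on $k$, suppressing the superscript $\de$ as in the proof of Lemma~\ref{appDomCoupRenLem}, and enlarge the induction hypothesis to also include the auxiliary comparison
\begin{itemize}
\item[(1a)] $\ao{,j}_{k\de}(\nu,\nu')\le\ao{,j}_{k\de}(\nu)\wedge\ao{,j}_{k\de}(\nu')$ for every $j\ge 0$.
\end{itemize}
The base case is trivial since the three systems share their initial conditions.

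For the induction step on (1) at some $t\in((k-1)\de,k\de)$, the comparison $\ai_t(\nu,\nu')\le\ai_t(\nu')$ follows immediately from Lemma~\ref{solCurvesLem}(1), since both trajectories are driven by $\nu'$ and the initial values are ordered by the induction hypothesis. The comparison $\ai_t(\nu,\nu')\le\ai_t(\nu)$ is the genuinely new step, as the driving measures differ; I would establish it by a last-crossing contradiction: assuming $\ai_t(\nu,\nu')>\ai_t(\nu)$ and choosing the latest $t_0<t$ at which equality holds, the inclusion $[0,\ai_s(\nu)]\subseteq[0,\ai_s(\nu,\nu')]$ on $(t_0,t]$ combined with $\nu\le\nu'$ gives
\begin{align*}
\int_{t_0}^t\nu'(\d s,\itf,W,[0,\ai_s(\nu,\nu')])\ge\int_{t_0}^t\nu(\d s,\itf,W,[0,\ai_s(\nu)]),
\end{align*}
contradicting the strict inequality at $t$. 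The same reasoning applied to the sum $\ai+\ao{,k-1}$, which in both systems satisfies a scalar decay equation of identical form (with drivers $\nu'$ respectively $\nu$), produces $(\ai+\ao{,k-1})_t(\nu,\nu')\le(\ai+\ao{,k-1})_t(\nu)\wedge(\ai+\ao{,k-1})_t(\nu')$. Plugging this into the boundary update $\ai_{k\de}=\ai_{k\de-}+\ao{,k-1}_{k\de-}$ extends (1) to $t=k\de$. Claim (1a) at $k\de$ is then immediate: for $j<k$ both sides vanish, while for $j\ge k$ the accumulation formula $\ao{,j}_{k\de}=\int_0^{k\de}\nu(\d s,\D_\de(j),W,[0,\ai_s])$, combined with $\nu\le\nu'$ and the ordering of $\ai$ already established, supplies the inequality.

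For (2) I would first verify mass conservation for~\eqref{SysODESpatial} by summing the defining equations, obtaining the identities $\ai_t(\nu,\nu')+\sum_j\ao{,j}_t(\nu,\nu')+\ac_t(\nu,\nu')+\acn_t(\nu,\nu')=1$ and $\ai_t(\nu)+\sum_j\ao{,j}_t(\nu)+\ac_t(\nu)=1$ (and similarly with $\nu'$). Subtracting these at $t=k\de$ and substituting (1a) together with $\ac_t(\nu),\ac_t(\nu')\ge0$ delivers (2) at the discretization boundaries. To propagate (2) across the interior $t\in((k-1)\de,k\de)$, I would monitor $F_t:=\ac_t(\nu,\nu')+\acn_t(\nu,\nu')-(\ai_t(\nu)\vee\ai_t(\nu')-\ai_t(\nu,\nu'))$ and show by direct calculation from~\eqref{SysODESpatial} that $dF_t\ge0$; under the natural assumption that transmitters exit after arrival (so that $\nu,\nu'$ effectively put no mass on $\{s>t\}$), the contributions with exit time in $[0,(k-1)\de]$ vanish, and the remaining terms are non-negative once (1) and $\nu\le\nu'$ are applied. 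Combined with $F_{(k-1)\de}\ge0$ from the induction hypothesis, this extends (2) throughout the interior.

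The principal obstacle is the driver-monotonicity step, since Lemma~\ref{solCurvesLem} only covers comparisons with a common driving measure; the last-crossing contradiction is the essential new ingredient. A secondary technicality is the signed bookkeeping of $dF_t$ in the interior, where mass conservation alone is not tight enough and a careful split of the contributions from the different exit-time regions in~\eqref{SysODESpatial} is needed to certify the sign.
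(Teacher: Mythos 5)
Your proposal is correct and follows essentially the same route as the paper: induction on $k$ with the auxiliary claim (1a), the scalar comparison of Lemma~\ref{solCurvesLem} (extended to ordered driving measures via the last-crossing argument, which is exactly the mechanism behind that lemma) applied to $\ai$ and to $\ai+\ao{,k-1}$, and mass conservation plus (1a) for part (2). The only divergence is that the paper contents itself with establishing (2) at the grid points $t=k\de$ (which is all that Corollary~\ref{scaleBetLem} uses), so your interior propagation via the functional $F_t$ is extra work beyond what the paper carries out.
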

\begin{proof}
We suppress the $\de$-dependence in the proof. Let us first prove
 \begin{enumerate}
                \item[(1)] for every $t \le \TF$, we have $\ai_t(\nu, \nu') \le \ai_t(\nu) \wedge \ai_t(\nu')$ and
                \item[(1a)] for every $j,k \ge 0$, we have $\ao{,j}_{k\de}(\nu, \nu') \le \ao{,j}_{k\de}(\nu) \wedge \ao{,j}_{k\de}(\nu')$
        \end{enumerate}
by induction on $k$ and let $t \in ((k-1)\de, k\de]$ be arbitrary.

     For $t \ne k\de$, domination for $\ai_t(\nu, \nu')$ follows from Lemma~\ref{solCurvesLem}. From the same lemma, we conclude that domination holds for $\ai_t(\nu, \nu') + \ao{,k-1}_t(\nu, \nu')$. Hence, domination for $\ai_t(\nu, \nu')$ also holds at $t = k\de$.
     By the defining integral formula for $\ao{,j}_{k\de}(\nu, \nu')$, part (1a) is implied by part (1). 
      Since
        \begin{align*}
                \ai_{k\de}(\nu) - \ai_{k\de}(\nu, \nu') = (\ao{}_{k\de}(\nu, \nu') + \ac_{k\de}(\nu, \nu') + \acn_{k\de}(\nu,\nu')) - (\ao{}_{k\de}(\nu) + \ac{}_{k\de}(\nu)),
        \end{align*}
        and similar for $\ai_{k\de}(\nu')$, part (2) is an immediate consequence of part (1a). 
        \end{proof}

As in Lemma~\ref{scaleCritLem}, the number of users $\addc{\de}{}(\nu, \nu')$ that are critical due to the time discretization, vanish in the limit $\de \downarrow 0$.
        \begin{lemma}
                \label{critVanCoupLem}
Let $\nu, \nu' \in \Mac(\muT)$ be such that $\nu \le \nu'$.
 Then, $\lim_{\de \downarrow 0}\addc{\de}{}_{\TF}(\nu, \nu') = 0$.
        \end{lemma}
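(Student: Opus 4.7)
The plan is to decompose $\addc{\de}{}_{\TF}(\nu,\nu')$ into the two contributions appearing in the definition of $\ac$ in the system~\eqref{SysODESpatial} and then to verify that each contribution falls under the hypothesis of part~(1) of Lemma~\ref{stoDomLem}. This is the same strategy used in the proof of Lemma~\ref{scaleCritLem}, adapted to the coupled setting.

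Concretely, writing $k(s) = \lfloor s/\de\rfloor+1$ so that $s \in \D_\de(k(s)-1)$, I would introduce
\begin{align*}
A_1^\de(\nu,\nu') &= \{(s,t,x,u) \in V:\, t \in \D_\de(k(s)-1),\ u \in [0,\addi{\de}_{s-}(\nu,\nu') + \addo{\de}{,k(s)-1}_{s-}(\nu,\nu')]\},\\
A_2^\de(\nu,\nu') &= \{(s,t,x,u) \in V:\, t \in (k(s)\de,\TF],\ u \in [\addi{\de}_{s-}(\nu,\nu'),\addi{\de}_{s-}(\nu,\nu') + \addo{\de}{,k(s)-1}_{s-}(\nu,\nu')]\},
\end{align*}
so that by integrating the $\ac$-equation over successive sub-intervals one obtains $\addc{\de}{}_{\TF}(\nu,\nu') = \nu(A_1^\de(\nu,\nu')) + \nu(A_2^\de(\nu,\nu'))$. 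Since the driving measure $\nu$ is absolutely continuous with respect to $\muT$, it suffices by part~(1) of Lemma~\ref{stoDomLem} to show that $\sup_{s \le \TF}|A_{j,s}^\de(\nu,\nu')| \to 0$ as $\de\downarrow 0$ for $j = 1,2$.

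For $A_1^\de$ the bound $|A_{1,s}^\de(\nu,\nu')| \le \de\,|W|$ is immediate, since the $t$-slice has Lebesgue length at most $\de$ and the $u$-slice is contained in $[0,1]$. For $A_2^\de$ I bound the $u$-length by $\addo{\de}{,k(s)-1}_{s-}(\nu,\nu')$, and then use the defining integral equation
$$\addo{\de}{,k(s)-1}_{s-}(\nu,\nu') \le \nu(\itf \times \D_\de(k(s)-1) \times W \times [0,1]),$$
which gives $|A_{2,s}^\de(\nu,\nu')| \le \TF\, \nu(\itf \times \D_\de(k(s)-1) \times W \times [0,1])$. Because $\nu \in \Mac(\muT)$ and the Lebesgue measure of $\itf \times \D_\de(k) \times W \times [0,1]$ tends to zero uniformly in $k$ as $\de\downarrow 0$, Lemma~\ref{AbsoluteContinuity}(1) yields $\sup_k \nu(\itf \times \D_\de(k) \times W \times [0,1]) \to 0$, which in turn gives $\sup_{s \le \TF}|A_{2,s}^\de(\nu,\nu')| \to 0$.

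I do not anticipate a real obstacle: the only delicate point is that here $\addi{\de}$ and $\addo{\de}{,k-1}$ come from the \emph{coupled} system~\eqref{SysODESpatial} rather than from~\eqref{SysODE}, so one must verify that $\addo{\de}{,k-1}_{s-}(\nu,\nu')$ is still controlled by the $\nu$-mass of the corresponding discretization slab. This is immediate from the $\ao{,j}$-equation in~\eqref{SysODESpatial}, which is driven by $\nu$ (not $\nu'$), so the argument of Lemma~\ref{scaleCritLem}(1) transfers verbatim.
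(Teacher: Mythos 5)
Your proposal is correct and follows exactly the route the paper intends: the paper's own proof of Lemma~\ref{critVanCoupLem} consists of the single remark that the arguments of Lemma~\ref{scaleCritLem} apply verbatim, and your write-up is precisely that adaptation, with the decomposition into $A_1^\de$, $A_2^\de$ and the application of Lemma~\ref{stoDomLem}(1). The one point you flag---that $\addo{\de}{,k(s)-1}_{s-}(\nu,\nu')$ is still bounded by the $\nu$-mass of the slab $\itf\times\D_\de(k(s)-1)\times W\times[0,1]$ even though the within-interval equation for $\ao{,k-1}$ in~\eqref{SysODESpatial} is driven by $\nu'$---is handled correctly, since that equation is decreasing on the interval and the value at the left endpoint was accumulated under the $\nu$-driven equation.
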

        \begin{proof}
                Since the arguments from Lemma~\ref{scaleCritLem} apply verbatim, we omit the proof.
        \end{proof}

\begin{corollary}
        \label{scaleBetLem}
Let $\nu, \nu' \in \Mac(\muT)$ be such that $\nu \le \nu'$.  Then, for every $t \le \TF$,
        \begin{align*}
|\b_t(\nu) - \b_t(\nu')| \le 2\Vert\nu - \nu'\Vert.
\end{align*}
\end{corollary}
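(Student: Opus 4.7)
The plan is to use the combined discretization $\addi{\de}_t(\nu,\nu')$ from Definition~\eqref{SysODESpatial} as a pivot between $\addi{\de}_t(\nu)$ and $\addi{\de}_t(\nu')$, and then pass to the limit $\de\downarrow 0$. Concretely, since $\nu\le\nu'$, Lemma~\ref{appDomCoupLem} gives both
\[
\addi{\de}_t(\nu,\nu')\le\addi{\de}_t(\nu)\wedge\addi{\de}_t(\nu')
\quad\text{and}\quad
\addi{\de}_t(\nu)\vee\addi{\de}_t(\nu') - \addi{\de}_t(\nu,\nu') \le \addc{\de}{}_t(\nu,\nu')+\addcn{\de}{}_t(\nu,\nu'),
\]
which I would combine into the key pointwise bound
$|\addi{\de}_t(\nu)-\addi{\de}_t(\nu')|\le \addc{\de}{}_t(\nu,\nu')+\addcn{\de}{}_t(\nu,\nu')$. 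Lemma~\ref{discMonLem} (extended to the spatial approximations along the same lines) guarantees that $\addi{\de}_t(\cdot)$ is monotone along dyadic $\de$, so that the limsup defining $\b_t$ via $\b_t(\nu)=1-\lim_{\de\downarrow 0}\addi{\de}_t(\nu)$ is a genuine limit. Consequently, one can pass the absolute value through the limit to get
$|\b_t(\nu)-\b_t(\nu')|\le\limsup_{\de\downarrow 0}\bigl(\addc{\de}{}_t(\nu,\nu')+\addcn{\de}{}_t(\nu,\nu')\bigr)$.

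The next step is to dispose of each critical term separately. The time-discretization part $\addc{\de}{}_t(\nu,\nu')$ vanishes as $\de\downarrow 0$ by Lemma~\ref{critVanCoupLem}. For the measure-discretization part, I would use the defining formula $\addcn{\de}{}_t(\nu,\nu')=\int_0^t(\nu'-\nu)(\d s,\itf,W,[0,\addi{\de}_s+\addo{\de}{,k-1}_s])$. Since $\nu'-\nu$ is a nonnegative measure and the fibre $[0,\addi{\de}_s+\addo{\de}{,k-1}_s]$ is a subset of $[0,1]$, this integral is trivially dominated by $(\nu'-\nu)(V)=\Vert\nu-\nu'\Vert$, uniformly in $\de$. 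Combining the two estimates yields $|\b_t(\nu)-\b_t(\nu')|\le\Vert\nu-\nu'\Vert$, which is already stronger than the asserted bound, the factor $2$ providing convenient slack (useful, in particular, if one avoids the monotonicity argument and instead bounds $|\b_t(\nu)-\b_t(\nu,\nu')|+|\b_t(\nu')-\b_t(\nu,\nu')|$ directly, each of which is at most $\Vert\nu-\nu'\Vert$).

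The only real obstacle is the interchange of absolute value with the $\de$-limit, for which I rely on the dyadic monotonicity of $\addi{\de}$; without it one would have to argue via a direct sandwich through $\b_t(\nu,\nu'):=1-\lim_{\de\downarrow 0}\addi{\de}_t(\nu,\nu')$, which also exists by Lemma~\ref{appDomCoupLem}(1) and Lemma~\ref{critVanCoupLem}, so this is safely navigable either way. Everything else is a direct quotation from Lemmas~\ref{appDomCoupLem} and~\ref{critVanCoupLem} together with the positivity of $\nu'-\nu$.
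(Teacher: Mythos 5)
Your proposal follows the same route as the paper's proof: pivot through the coupled trajectory $\addi{\de}_t(\nu,\nu')$ using Lemma~\ref{appDomCoupLem}, kill $\addc{\de}{}$ by Lemma~\ref{critVanCoupLem}, and bound $\addcn{\de}{}_t(\nu,\nu')$ directly by $(\nu'-\nu)(V)=\Vert\nu-\nu'\Vert$. One genuine improvement: you invoke Lemma~\ref{appDomCoupLem}(2) in its stated form (the $\vee$ on the left-hand side), which yields the constant $1$; the paper decomposes $|\addi{\de}_t(\nu)-\addi{\de}_t(\nu')|$ by the triangle inequality into the two one-sided differences and bounds each separately, which is where the factor $2$ in the statement comes from. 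Two small remarks. First, no ``extension of Lemma~\ref{discMonLem}'' is needed: the trajectories $\addi{\de}_t(\nu)$ and $\addi{\de}_t(\nu')$ appearing here are solutions of the non-spatial system~\eqref{SysODE}, so Lemma~\ref{discMonLem} applies verbatim and the limsup defining $\b_t$ is already a limit (as the paper notes immediately after that lemma); the coupled system~\eqref{SysODESpatial} is only needed for $\addi{\de}_t(\nu,\nu')$. Second, and more substantive: the paper's proof first replaces $t$ by the interval endpoint $k_\de(t)\de$ (using absolute continuity of $\nu$ and $\nu'$ to show the discrepancy vanishes as $\de\downarrow0$) before invoking Lemma~\ref{appDomCoupLem}(2); this is because the balance identity used in the proof of Lemma~\ref{appDomCoupLem}(2) is established only at the grid points $k\de$, not at arbitrary interior $t$. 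If you apply the lemma's conclusion directly at a general $t$ you are implicitly assuming a stronger form than what its proof delivers; inserting the same reduction to $k_\de(t)\de$ closes that gap cleanly.
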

\begin{proof}
First, by Lemma~\ref{appDomCoupLem} part (1),
        \begin{align*}
            |\b_t(\nu) - \b_t(\nu')| \le \limsup_{\de \downarrow 0}\big(\addi{\de}_t(\nu) - \addi{\de}_t(\nu, \nu')\big) + \limsup_{\de \downarrow 0} \big(\addi{\de}_t(\nu') - \addi{\de}_t(\nu, \nu')\big).
\end{align*}
We only prove the bound for the first summand. The proof for the second summand is the same.
Writing $k_\de(t)$ for the integer $k$ determined by $t \in \D_\de(k-1)$, absolute continuity of $\nu$ and $\nu'$ implies that
    $$\limsup_{\de \downarrow 0}\big(\addi{\de}_t(\nu) - \addi{\de}_t(\nu, \nu')\big) = \limsup_{\de \downarrow 0}\big(\addi{\de}_{k_\de(t)\de}(\nu) - \addi{\de}_{k_\de(t)\de}(\nu, \nu')\big).$$
Hence, by Lemma~\ref{appDomCoupLem} part (2), Lemma~\ref{critVanCoupLem} and the definition of $\addcn{\de}{}$,
    $$\limsup_{\de \downarrow 0}\big(\addi{\de}_t(\nu) - \addi{\de}_t(\nu, \nu')\big) \le \addcn{\de}{}_{t}(\nu, \nu') \le  \Vert\nu - \nu'\Vert,$$
as required.
\end{proof}

\subsubsection{Coupling}
\label{couplSec}
As mentioned in the introduction to this section, identifying the rate function with the technique of~\cite[Theorem 4.2.23]{dz98} involves showing that the contraction mappings defining the approximations are close to the  contraction mappings defining the original rate functions uniformly on sets of bounded entropy. However,~\cite[Theorem 4.2.23]{dz98} is applicable if both, the approximating rate functions as well as the target rate function, are given via contraction mappings applied to a common rate function. 
Indeed, although both $I$ and $I^\de$ are defined via contractions based on relative entropy functions, the corresponding a priori measures are different. In order to remove this obstacle, we proceed as in~\cite[Section 5.5]{wireless3} and introduce a suitable coupling construction.

To compare different measures on $V'$, we add an additional $[0, \infty]$-coordinate and introduce the coupling space
$$V^* =V'\times  [0, \k_\infty]= \itfiww \times [0, \k_\infty].$$
More precisely, given a measure $\nns \in \MM(V^*)$ and a measurable function $f:\,W^2 \to [0,\k_\infty]$, we construct a measure $\nns(f)$ on $V'$ defined by first restricting to the sub-level set 
$$M(f) = \{(s, t, x, y, u, v):\, v \le f(x,y) \}$$ 
and then forgetting the last coordinate.  For instance, this definition allows us to represent $\muT(\muR)$ and $\muT^\de(\muR,\muR)$ as 
$$\muT(\muR) = \mu^*(\k_{\muR})\qquad \text{ and }\qquad \muT^\de(\muR,\muR) = \mu^*(\k^\de_{\muR,\muR}),$$
where
$$\mu^* = \mut \otimes \musp \otimes \muR  \otimes {\bf U}[0,1]\otimes |\cdot|.$$

Using this coupling construction, we now provide concise representations of total-variation distances of measures in $V'$. Indeed, for arbitrary measurable, bounded functions $f,g:\, W^2 \to [0,\k_\infty]$, we use the identity $\nn^*(f) - \nn^*(g)=\nnsp(f,g) - \nnsm(f,g)$, where
$$\frac{\d \nnsp(f,g)}{\d \nn^*}(s,t, x, y, u, v) =  \one\{ g(x,y)\le v \le f(x,y)  \} $$
and
$$\frac{\d \nnsm(f,g)}{\d \nn^*}(s,t, x, y, u, v) =  \one\{ f(x,y)\le v \le g(x,y) \}.$$
Thus, the total variation distance between $\nns(f)$ and $\nns(g)$ becomes
\begin{align}
        \label{tvDefEq}
        \Vert\nns(f) - \nns(g)\Vert = \max\{\nnsp(f,g)(V^*), \nnsm(f,g)(V^*)\}.
\end{align}

\subsubsection{Identification of the rate function}
\label{unifBoundSec2}

Recall that our goal is to show that the rate function of the LDP for $\G^\la$ is given by 
\begin{align}
\label{trueRfEq}
I(\g)=\inf_{\nn\in \MM':\, \g(\nn)=\g}h(\nn|\muT(\muR)).
\end{align}
After the preparations of the previous subsections, the only core ingredient that is missing to apply~\cite[Theorem 4.2.23]{dz98} is the following uniform approximation result, where $\MM'_\alpha = \{\nn \in \MM(V^*):\, h(\nn|\mus) \le \alpha\}$.
\begin{lemma}
	\label{uniformLem}
	Let $\alpha>0$ be arbitrary. Then, $\lim_{\de \downarrow 0} \sup_{\substack{\nns \in \MM'_\a}}\Vert\g(\nn^*(\k_{\muR})) - \g(\nn^*(\k_{\muR,\muR}^\de))\Vert = 0$.
\end{lemma}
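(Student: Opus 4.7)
The plan is to exploit the coupling construction from Section~\ref{couplSec}, realizing both $\nn^*(\k_{\muR})$ and $\nn^*(\k^\de_{\muR,\muR})$ as projections of the common ambient measure $\nn^*$ restricted to two different sub-level sets of $V^*$. Writing $f = \k_{\muR}$ and $g = \k^\de_{\muR,\muR}$, the key reduction is to prove a Lipschitz-type bound of the form
$$\Vert \g(\nn^*(f)) - \g(\nn^*(g))\Vert \le C\cdot \nn^*(S^\de), \qquad S^\de = \{(s,t,x,y,u,v) \in V^*:\, f(x,y)\wedge g(x,y) < v \le f(x,y) \vee g(x,y)\},$$
and then to note that $\mu^*(S^\de) = \TF\cdot\int |f-g|\,\d(\musp\otimes\muR)\to 0$ as $\de\downarrow 0$ by Lemma~\ref{intBoundLem} part~(3).

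To establish the bound, I would telescope $\g(\nn^*(f)) - \g(\nn^*(g))$ through the intermediate term $\g(\nn^*(f\wedge g))$ and, using the defining formula~\eqref{frustUserDefSp}, split each difference into (i) a measure-change contribution governed by $\Vert \nn^*(f)-\nn^*(f\wedge g)\Vert = \nnsm(f,g)(V^*)$ (and analogously for $g$), and (ii) a threshold-shift contribution coming from the inner interval $[1-\b_s(\cdot_y, 1),1]$. Since $\nn^*(f\wedge g)_y \le \nn^*(f)_y, \nn^*(g)_y \le \nn^*(f\vee g)_y$, I can apply Corollary~\ref{scaleBetLem} twice per fixed $y$, through the intermediate $\nn^*(f\wedge g)_y$, to deduce
$$|\b_s(\nn^*(f)_y, 1) - \b_s(\nn^*(g)_y, 1)| \le 2\Vert \nn^*(f)_y - \nn^*(g)_y\Vert.$$
Integrating this bound via the disintegration $\nn(\d s,\d t,\d x,\d y,\d u) = \nn_y(\d s,\d t,\d x,\d u)\muR(\d y)$ and translating back into the coupling space, the threshold-shift contribution is bounded by a multiple of $\nn^*(S^\de)$ as well, which yields the desired Lipschitz estimate.

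Finally, the uniformity over $\nn^* \in \MM'_\a$ is obtained from Lemma~\ref{AbsoluteContinuity} part~(2) applied to $\mu^*$: bounded relative entropy forces small $\mu^*$-measure sets to have uniformly small $\nn^*$-measure, and $\mu^*(S^\de) \to 0$. Together with the Lipschitz bound, this gives $\sup_{\nn^* \in \MM'_\a}\Vert \g(\nn^*(f)) - \g(\nn^*(g))\Vert \to 0$. The main obstacle is the threshold-shift term: the per-$y$ estimate has to be integrated against $\nn^*(f \wedge g)$ in such a way that the outcome is controlled by $\nn^*(S^\de)$ itself rather than by the weaker global quantity $\int_W \Vert \nn^*(f)_y - \nn^*(g)_y\Vert\,\d\muR(y)$; careful bookkeeping with the $(y,v)$-disintegration of $\nn^*$ and the inclusions $\nn^*(f)_y,\nn^*(g)_y \subseteq \nn^*(f\vee g)_y$ will be needed to keep the constant in $C$ absolute and independent of $\a$.
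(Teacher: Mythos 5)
Your decomposition is the right one and matches the paper's: split the difference into a measure-change term, bounded by $\Vert\nns(\k_{\muR})-\nns(\k^\de_{\muR,\muR})\Vert$, and a threshold-shift term coming from moving the interval $[1-\b_s(\cdot_y,1),1]$; control $\b$ via Corollary~\ref{scaleBetLem} applied through the intermediate measure $\nns(\k_{\muR}\wedge\k^\de_{\muR,\muR})_y$; and feed Lemma~\ref{intBoundLem} part~(3) together with Lemma~\ref{AbsoluteContinuity} part~(2) into the coupling representation~\eqref{tvDefEq}. The measure-change term is handled correctly. The gap is in your treatment of the threshold-shift term, and it is exactly the obstacle you flagged: the claimed Lipschitz bound $\Vert\g(\nns(f))-\g(\nns(g))\Vert\le C\,\nns(S^\de)$ with an absolute constant $C$ is not attainable, and no bookkeeping with the $(y,v)$-disintegration will produce it. The threshold-shift term is the $\nns(\k_{\muR})$-mass of a thin slab in the $u$-coordinate, namely the set $C_{\nns,\de}$ of points with $|u-1+\b_s(\nns(\k_{\muR})_y)|\le 2\Vert\nns(\k_{\muR})_y-\nns(\k^\de_{\muR,\muR})_y\Vert$, whereas $S^\de$ lives in the $v$-coordinate; a measure of bounded entropy can place much more mass on the slab than on $S^\de$, since the entropy constraint only penalizes large densities logarithmically. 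Smallness of the slab's \emph{width} directly controls only its $\mu(\muR)$-measure (the reference measure has uniform $u$-marginal), not its $\nns$-measure.

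What is needed instead is a two-stage argument. First, integrate the pointwise width bound against the reference measure: reversing the disintegration in $y$ gives
$\mu(\muR)(C_{\nns,\de})\le 4\mut(\itf^2)\musp(W)\k_\infty\big(\nnsp(\k_{\muR},\k^\de_{\muR,\muR})(V^*)+\nnsm(\k_{\muR},\k^\de_{\muR,\muR})(V^*)\big)$,
and the right-hand side tends to $0$ uniformly over $\MM'_\a$ by Lemma~\ref{intBoundLem} part~(3) and a first application of Lemma~\ref{AbsoluteContinuity} part~(2). Second, apply Lemma~\ref{AbsoluteContinuity} part~(2) a \emph{second} time to upgrade the uniform smallness of $\mu(\muR)(C_{\nns,\de})$ to uniform smallness of $\nns(\k_{\muR})(C_{\nns,\de})$. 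The resulting modulus of continuity is nonlinear and depends on $\a$ through the entropy bound; this is unavoidable, and it is why the conclusion is a uniform limit rather than a Lipschitz estimate. With this two-stage replacement for your step (ii), the rest of your argument goes through.
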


We sketch very briefly how Lemma~\ref{uniformLem} implies that the rate function is of the form asserted in Theorem~\ref{LDP_Spatial}. For details, the reader is referred to the proof of~\cite[Proposition 4.3]{wireless3}.
\begin{proof}[Proof of Theorem~\ref{LDP_Spatial}]
	From Propositions~\ref{ExpEquiv_Spatial} and~\ref{ExpEquiv_Spatial_2} we conclude that $\g(\Lld(\muR))$ form exponential good approximations of $\G^\la$. Moreover, by Proposition~\ref{ExpEquiv_Spatial_m}, the empirical measure $\g(\Lld(\muR))$ satisfies an LDP with rate function 
\begin{align}
\label{approxRfEq}
I^\de(\g) \mapsto \inf_{\nn\in \MM(V'):\, \g(\nn)=\g}h(\nn|\muT^\de(\muR,\muR)).
\end{align}
Thus, once the uniform approximation bound from Lemma~\ref{uniformLem} is shown, it remains to verify that the rate functions in~\eqref{trueRfEq} and~\eqref{approxRfEq} coincide with
$$\inf_{\nn^*\in \MM(V^*):\, \g(\nn^*(\k_{\muR}))=\g}h(\nn^*|\mu^*) \text{ and }\inf_{\nn^*\in \MM(V^*):\, \g(\nn^*(\k_{\muR,\muR}^\de))=\g}h(\nn^*|\mu^*) ,$$
respectively. This is achieved by an optimization over the coupling coordinate, see~\cite[Proposition 4.3]{wireless3}. 
\end{proof}

We conclude the paper by proving Lemma~\ref{uniformLem} along the lines of~\cite[Lemma 5.5]{wireless3}. Nevertheless, for the convenience of the reader, we reproduce the most important steps. 

\begin{proof}[Proof of Lemma~\ref{uniformLem}]
We first simplify notation and write $\k$  and $\k^\de$ for $\k_{\muR}$ and $\k^\de_{\muR, \muR}$, respectively. By definition of $\g$, we need to compare the measures
	$$\int_{W} \nns(\k^\de)_y(\d s, \d t, \d x, [1-\b_s(\nns(\k^\de)_y), 1]) \muR(\d y)$$
	and
	$$\int_{W} \nns(\k)_y(\d s, \d t, \d x, [1-\b_s(\nns(\k)_y), 1]) \muR(\d y).$$
Recall that, by absolute continuity, 
	$$\nns(\k)(\d s, \d t, \d x, \d y, \d u) = \nns(\k)_y(\d s,\d t, \d x, \d u)\muR(\d y)$$
	and 
	$$\nns(\k^\de)(\d s, \d t, \d x, \d y, \d u) = \nns(\k^\de)_y(\d s, \d t, \d x, \d u)\muR(\d y).$$

	We subdivide the comparison into providing bounds separately for 
	$$\Vert\int_{W} (\nns(\k)_y-\nns(\k^\de)_y)(\d s, \d t, \d x, [1-\b_s(\nns(\k^\de)_y), 1]) \muR(\d y)\Vert$$
	and
	$$\int_{W} \nns(\k)_y(\itf^2 \times W \times \I(1-\b_s(\nns(\k)_y),  1-\b_s(\nns(\k^\de)_y))) \muR(\d y)$$
	where $\mc{I}(x,y) = [x \wedge y, x \vee y]$.
		The first expression is bounded above by $\Vert \nns(\k) - \nns(\k^\de)\Vert$, and identity~\eqref{tvDefEq}, Lemma~\ref{AbsoluteContinuity} part (2) and Lemma~\ref{intBoundLem} part (3) yield that
	$$\lim_{\de \downarrow 0}\sup_{ \nns \in  \MM'_{\a}}\Vert\nns(\k) - \nns(\k^\de)\Vert = 0.$$
	By Corollary~\ref{scaleBetLem}, the second expression is bounded above by $\nns(\k)(C_{\nns, \de})$ where
	$$C_{\nns, \de} = \{(s, t, x, y, u):\, |u-1+\b_s(\nns(\k)_y)| \le 2\Vert\nns(\k)_y- \nns(\k^\de)_y\Vert\}.$$
	In particular, by Lemma~\ref{AbsoluteContinuity} part (2) it remains to show that 
	$\lim_{\de \downarrow 0}\sup_{\nns \in \MM'_{\a}} \mu(\muR)(C_{\nns, \de}) = 0.$
	For this, we note that reversing the disintegration of the relay measure gives that
	\begin{align*}
		\mu(\muR)(C_{\nns, \de}) &\le 4\mut(\itf^2)\int_{W^2}\Vert \nns(\k)_y - \nns(\k^\de)_y\Vert \k(y|x) (\musp \otimes \muR) (\d x, \d y) \\
		&\le  4\mut(\itf^2)\musp(W)\k_\infty \int_W \max\{\nnsp(\k, \k^\de)_y(V') , \nnsm(\k, \k^\de)_y(V')\} \muR(\d y) \\
		&\le 4\mut(\itf^2)\musp(W)\k_\infty (\nnsp(\k, \k^\de)(V^*) + \nnsm(\k, \k^\de)(V^*)).
	\end{align*}
	Hence, using Lemma~\ref{AbsoluteContinuity} part (2) and Lemma~\ref{intBoundLem} finishes the proof.
\end{proof}

\section*{Acknowledgments}
This research was supported by the Leibniz program \emph{Probabilistic Methods for Mobile Ad-Hoc Networks}, by LMU Munich's Institutional Strategy LMUexcellent within the framework of the German Excellence Initiative.

\bibliography{../../wias}

\begin{thebibliography}{10}

\bibitem{lugosi}
S.~Boucheron, G.~Lugosi, and P.~Massart.
\newblock {\em Concentration Inequalities}.
\newblock Oxford University Press, Oxford, 2013.

\bibitem{chatterjee2014localization}
S.~Chatterjee and M.~Harel.
\newblock Localization in random geometric graphs with too many edges.
\newblock {\em arXiv preprint arXiv:1401.7577}, 2014.

\bibitem{dz98}
A.~Dembo and O.~Zeitouni.
\newblock {\em Large Deviations Techniques and Applications}.
\newblock Springer, New York, second edition, 1998.

\bibitem{eiSchm}
P.~Eichelsbacher and U.~Schmock.
\newblock Exponential approximations in completely regular topological spaces
  and extensions of {S}anov's theorem.
\newblock {\em Stochastic Process. Appl.}, 77(2):233--251, 1998.

\bibitem{Gr01}
C.~Graham.
\newblock Sharp estimates and a central limit theorem for the invariant law for
  a large star-shaped loss network.
\newblock {\em Stochastic Process. Appl.}, 95(2):177--202, 2001.

\bibitem{GrMe93}
C.~Graham and S.~M\'el\'eard.
\newblock Propagation of chaos for a fully connected loss network with
  alternate routing.
\newblock {\em Stochastic Process. Appl.}, 44(1):159--180, 1993.

\bibitem{gramMel2}
C.~Graham and S.~M{\'e}l{\'e}ard.
\newblock A large deviation principle for a large star-shaped loss network with
  links of capacity one.
\newblock {\em Markov Process. Related Fields}, 3(4):475--492, 1997.

\bibitem{gramMel1}
C.~Graham and S.~M{\'e}l{\'e}ard.
\newblock An upper bound of large deviations for a generalized star-shaped loss
  network.
\newblock {\em Markov Process. Related Fields}, 3(2):199--223, 1997.

\bibitem{wireless3}
C.~Hirsch, B.~Jahnel, and R.~Patterson.
\newblock Space-time large deviations in capacity-constrained relay networks.
\newblock {\em To appear in the Latin American Journal of Probability and
  Mathematical Statistics, also available at, arXiv:1609.06856}, 2016.

\bibitem{Ke91}
F.~P. Kelly.
\newblock Loss networks.
\newblock {\em Ann. Appl. Probab.}, 1(3):319--378, 1991.

\end{thebibliography}
\bibliographystyle{abbrv}

\end{document}